\documentclass[a4paper]{amsart}
\usepackage{a4}
\usepackage{amsmath}
\usepackage{amssymb}
\usepackage[applemac]{inputenc}
\usepackage{verbatim}
\usepackage{soul}
\usepackage{graphicx}
\usepackage{cancel}
\usepackage{xcolor}
\usepackage{color}
\usepackage{enumerate}
\usepackage{subcaption}
\usepackage{comment}
\usepackage{todonotes}
\topmargin -1.0cm
\oddsidemargin 0in
\evensidemargin 0in
\textwidth 6.3 truein
\textheight 9.1 truein
\def\dd{{\rm d}}
 \newtheorem{theo}{Theorem}[section]
 \newtheorem{lem}{Lemma}[section]
 \newtheorem{prop}{Proposition}[section]

 \newtheorem{dfn}{Definition}[section]
 \newtheorem{rem}{Remark}[section]
  \newtheorem{example}{Example}[section]
\newcommand{\be}{\begin{eqnarray}}
\newcommand{\ee}{  \end{eqnarray}}
\newcommand{\beno}{\begin{eqnarray*}}
\newcommand{\eeno}{  \end{eqnarray*}}

\newcommand\R{{\mathbb R}}
\newcommand\A{{\mathcal A}}
\newcommand\FF{{\mathbb F}}
\newcommand\EE{{\mathbb E}}
\newcommand\PP{{\mathbb P}}
\newcommand\F{{\mathcal F}}
 \newcommand\I{{\mathcal I}}
\newcommand{\vare}{\varepsilon}
\newcommand{\NN}{\mathbb{N}}

\newcommand{\Dx}{{\Delta x}}
\newcommand{\Dt}{{\Delta t}}

\author[Elisabetta Carlini]{Elisabetta Carlini\ec}
\thanks{\ec Dipartimento di Matematica Guido Castelnuovo, ``Sapienza'', Universit\`a di Roma, 00185 Rome,
Italy (carlini@mat.uniroma1.it).}
\author[Athena Picarelli]{Athena Picarelli\ap}
\thanks{\ap Dipartimento di Scienze Economiche, Universit\`a degli Studi di Verona, 37129 Verona, Italy (athena.picarelli@univr.it).}
\author[Francisco J. Silva]{Francisco J. Silva\fs} 
\thanks{\fs Institut de recherche XLIM-DMI, UMR-CNRS 7252, Facult\'e des Sciences et Techniques, 
Universit\'e de Limoges, 87060 Limoges, France (francisco.silva@unilim.fr).}
\newcommand{\ec}{$^1$} \newcommand{\ap}{$^2$}\newcommand{\fs}{$^3$}
\begin{document} 
\title[A SL scheme for HJB equations with Dirichlet boundary conditions]{A Semi-Lagrangian scheme for Hamilton-Jacobi-Bellman equations with Dirichlet boundary conditions} 

\begin{abstract}
We study the numerical approximation of time-dependent, possibly degenerate, second-order Hamilton-Jacobi-Bellman equations in bounded domains with nonhomogeneous Dirichlet boundary conditions. It is well known that convergence towards the exact solution of the equation, considered here in the viscosity sense, holds if the scheme is monotone, consistent, and stable. While standard finite difference schemes are, in general, not monotone, the so-called semi-Lagrangian schemes are monotone by construction. On the other hand, these schemes make use of a wide stencil and, when the equation is set in a bounded domain, this typically causes an overstepping of the boundary and hence the loss of consistency. We propose here a semi-Lagrangian scheme defined on an unstructured mesh, with a suitable treatment at grid points near the boundary to preserve consistency, and show its convergence for problems where the viscosity solution can even be discontinuous. We illustrate the numerical convergence in several tests, including degenerate and first-order equations.
\end{abstract}
\date{\today} 

\maketitle 
{\small
\noindent {\bf AMS-Subject Classification:} 49L25, 65M12, 35D40, 35K55.  \\[0.5ex]
\noindent {\bf Keywords:} Hamilton-Jacobi-Bellman equations, Dirichlet boundary conditions, semi-Lagrangian schemes, convergence results, numerical experiences.
}

\section{Introduction}\label{sec:Intro}

We consider the following second-order, possibly degenerate, Hamilton-Jacobi-Bellman (HJB) equation
\begin{subequations}
\label{eq:HJB}
\begin{align}
 -\partial_t v+\max_{a\in A} \mathcal{L}^{a}(t, x,D_x v, D^2_{x} v)&=0\qquad\text{in } Q_T, 
 \label{parabolic_HJB} \\
 v&= \Psi\qquad\text{on }\partial^* Q_T,
 \label{boundary_conditions}
\end{align}
\end{subequations}
where, for a given $T>0$ and a nonempty open and bounded domain $\Omega \subset \R^d$, $Q_T:=[0, T)\times\Omega$ and $\partial^* Q_T := (\{T\}\times  \overline{\Omega}) \cup ([0,T)\times \partial\Omega)$ denote, respectively, the parabolic domain and its parabolic boundary,  
$\emptyset\neq A\subset\R^m$ is compact, $\Psi\colon\partial^{*}Q_T\to\R$ and, given $f\colon\overline{Q}_T\times A\to\R$, $b\colon\overline{Q}_T\times A\to\R^d$, and $\sigma\colon\overline{Q}_T\times A\to\R^{d\times p}$, for every $a\in A$, 
\begin{equation} 
\label{eq:def_L}
\mathcal{L}^{a}(t,x,q,P)=-\frac{1}{2}\text{Tr}[\sigma \sigma^\top(t,x,a) P]
-b(t,x,a)^{\top}q-f(t,x,a)\quad\text{for all }(t,x,q,P)\in \overline{Q}_T\times\R^{d}\times\R^{d\times d}.
\end{equation}

Observe that if $A=\{a\}$ for some $a\in\R^{m}$, by considering a simple change of the time variable, equation~\eqref{eq:HJB} reduces to a standard, possibly degenerated, linear parabolic equation. When $A$ is not a singleton, system~\eqref{eq:HJB} is nonlinear and naturally arises in the study of the optimal control of diffusion processes (see, e.g.,~\cite{FS06}). 

Under several assumptions on the data, imposing a kind of compatibility condition with the boundary value $\Psi$, the existence and uniqueness of a continuous viscosity solution to~\eqref{eq:HJB} (see, e.g.,~\cite{CIL92}) can be found, for instance, in~\cite{MR852356} and~\cite{FS06} for first- and second-order equations, respectively. For more general data, the study of the existence and uniqueness of viscosity solutions satisfying a weak form of the boundary condition has been conducted in~\cite{BP88,I89,MR1014943} for first-order equations, in~\cite{BB95,BR98} for stationary second-order equations, and in~\cite{MR2111030} for a class of time-dependent problems. 

The numerical approximation of boundary value problems such as~\eqref{eq:HJB} is practically relevant not only when the original problem is posed on a bounded domain, but also when it is defined on the whole space and the localization to a bounded region is required for computational tractability. On the other hand, its numerical analysis is significantly more challenging than the one where $\Omega=\R^{d}$, as the regularity of the solution near the boundary $\partial \Omega$ plays an important role. In this context, Godunov-type schemes for first-order stationary and time-dependent HJB equations with nonhomogeneous Dirichlet conditions have been considered in~\cite{Abgrall03} and~\cite{W08}, respectively. A variation of the abstract scheme in~\cite{Abgrall03} is investigated in~\cite{W08} and yields, in particular, the convergence of a finite volume approximation of a time-dependent HJB equation with nonhomogeneous Dirichlet conditions. For second-order stationary HJB equations with homogeneous Dirichlet conditions satisfying the so-called Cordes condition, a convergent discontinuous Galerkin finite element approximation is studied in~\cite{MR3077903}. In the case of time-dependent HJB equations with  Dirichlet conditions and admitting continuous solutions,~\cite[Chapter IX]{FS06} and~\cite{MR3033005} show the convergence of monotone finite difference  and finite element methods, respectively. Let us also mention the work~\cite{MR4699587} dealing with finite element approximation of time-dependent HJB equations with mixed boundary conditions and~\cite{MR3823876}, which discusses the relevance of the definition of boundary conditions for HJB equations including several numerical examples. For a more complete overview on  existing numerical methods to approximate solutions to fully nonlinear elliptic and parabolic equations with or without boundary conditions, we refer the reader to~\cite{MR3049920,MR3653852} and the references therein.

In this work, we study the approximation of~\eqref{eq:HJB} through semi-Lagrangian (SL) schemes, which are traditionally based on the discretization of representation formulae of solutions to PDEs in terms of their characteristic curves. Compared with standard finite difference schemes, one advantage of SL schemes is that they are at the same time explicit and stable under an inverse Courant-Friedrichs-Lewy (CFL) condition and hence allow for large time steps. We refer the reader to~\cite{FalcFerrBook} for an overview of SL schemes for first-order HJB equations. When considering equation~\eqref{eq:HJB} with $\Omega=\R^{d}$, in \cite{Men89,CamFal95,CFF10,DebrJako13} the authors consider a SL discretization that takes advantage of the fact that the viscosity solution is given by the so-called {\it value function} of an optimal control problem of a diffusion process evolving in $\R^{d}$. Roughly speaking, they consider a stochastic Euler time-discrete scheme for the underlying controlled characteristic curves combined with interpolation methods to deal with the space variable. Similarly, as recalled in Section~\ref{preliminaries_parabolic_HJB} below, the solution to~\eqref{eq:HJB} is given by the value function of an optimal control problem of a controlled diffusion where a cost, represented by $\Psi(t,x)$, is incurred as soon as the controlled trajectory hits the boundary at point $x\in\partial\Omega$ at time $t\in [0,T]$, and the trajectory is stopped at $x$ until the final time $T$. If one proceeds as in the case where $\Omega=\R^d$, a particular difficulty arises at grid points near the boundary $\partial\Omega$, for which at least one of the possible time-discrete trajectories hits $\partial \Omega$ before the prescribed time step. Indeed, for those points, the SL discretization in~\cite{ Men89,CamFal95,DebrJako13} looses its consistency. In order to cope with this problem, some extensions of SL schemes for PDEs with Dirichlet boundary conditions have been considered in the literature. In~\cite{FJ17, Jensen_non_convex}, a SL scheme for the Monge-Amp\`ere equation, based on a rescaling of the space mesh near the boundary, is presented. A second-order fully discrete SL scheme, based on an extrapolation technique, is considered in~\cite{BCCF2020} to solve systems of advection-diffusion-reaction equations. Finally, a  SL scheme for second-order HJB equations has been proposed in~\cite{ReisRota17, PicaReisRota18}, for which, in order to deal with some instabilities, an implicit version had to be introduced.

Inspired by the scheme proposed by Milstein and Tretyakov in~\cite{MR1867423} (see also~\cite{BonaFerrRocc18,BF2014} for advection-diffusion equations arising in fluid dynamics), which deals with semi-linear equations, we consider a SL scheme that truncates discrete characteristic curves as soon as they hit the boundary $\partial\Omega$ and balances their weights in order to preserve consistency. In the case of linear equations, our scheme coincides with the one in~\cite{MR1867423} for one-dimensional problems ($d=1$) but differs from it in higher dimensions. For the general HJB equation~\eqref{eq:HJB}, the scheme considered here is explicit, consistent, monotone, stable, and provides a natural extension of the one in~\cite{ Men89,CamFal95,DebrJako13} to bounded domains. Our main results are contained in Theorem~\ref{theo:main_result_1} and in Theorem~\ref{theo:main_result_2}, which are both established by using the half-relaxed limit technique introduced in~\cite{BS91}. In Theorem~\ref{theo:main_result_1}, which deals with the case where the solution to~\eqref{eq:HJB} is continuous, we show the uniform convergence in $\overline{Q}_{T}$ of solutions of the scheme, while in Theorem~\ref{theo:main_result_2}, which covers the case of discontinuous solutions to~\eqref{eq:HJB}, the convergence is uniform over compact subsets of $[0,T]\times \Omega$. Notice that both convergence results hold under the same conditions over the time and discrete step than those in the standard case $\Omega=\R^{d}$.

The paper is structured as follows.  In the next section, we state our main assumptions, recall the probabilistic interpretation of~\eqref{eq:HJB} and the notions of viscosity solutions satisfying strong and weak Dirichlet boundary conditions. Section~\ref{sec:milstein} recalls the scheme in~\cite{MR1867423} in the case of a one-dimensional ($d=1$) linear parabolic equation and provides the construction of a consistent multidimensional extension. Based on the latter, we introduce in Section~\ref{sec:fullydiscrete} the SL scheme for the HJB equation~\eqref{eq:HJB} and show our main convergence results in Theorems~\ref{theo:main_result_1} and~\ref{theo:main_result_2}. Finally, we illustrate in Section~\ref{sec:tests} the numerical convergence of the scheme in several tests including non-degenerate, degenerate, and also first-order HJB equations. 
\smallskip

{\bf Acknowledgements.}  E. Carlini was partially supported by European Union - NextGenerationEU - and by the Italian Ministry of Instruction, University and Research (MIUR), project title: Optimal control problems: analysis, approximation - PRIN Project2022238YY5, by INdAM-research group GNCS, project title: Metodi numerici avanzati per equazioni alle derivate parziali in fenomeni di trasporto e diffusione - CUP E53C24001950001, and  by KAUST through the subaward agreement ORA-2021-CRG10-4674.6. 
A. Picarelli acknowledges financial support under the National Recovery and Resilience Plan (NRRP), Mission 4, Component 2, Investment 1.1, Call for tender No.
1409 published on 14.9.2022 by the Italian Ministry of Universities and Research (MUR), funded
by the European Union -  NextGenerationEU - project title: Probabilistic methods for energy
transition - CUP G53D23006840001 - Grant Assignment Decree No. 1379 adopted on 01/09/2023
by MUR. F. J. Silva was partially supported by l'Agence Nationale de la Recherche (ANR), project  ANR-22-CE40-0010, and by KAUST through the subaward agreement ORA-2021-CRG10-4674.6.

\section{Preliminaries on the parabolic HJB equation}
\label{preliminaries_parabolic_HJB}

In this section, we recall the probabilistic interpretation of~\eqref{eq:HJB}, which forms the basis of our SL scheme, and motivate the notions of viscosity solution to~\eqref{eq:HJB} for both strong and weak boundary conditions.

In what follows, we will consider the following assumption:  
\smallskip
\begin{enumerate}
\item[\bf(H1)] The functions $\Psi$, $f$, $b$, and $\sigma$ are continuous. Moreover, for $\phi=b$, $\sigma_{ij}$ $(i=1,\hdots,d,\,j=1,\hdots,p)$ one has the following Lipschitz property: there exists $L>0$ such that
\begin{equation}
\big|\phi(t,x,a)-\phi(t,y,a)\big|\leq L|x-y|\quad\text{for all }t\in [0,T],\,x,\,
y\in \overline{\Omega},\,a\in A.
\end{equation}
\end{enumerate}

Let $(\Omega,\F,\PP)$ be a probability space carrying a $p$-dimensional Brownian motion $W$ in $[0,T]$. We consider the natural filtration
$\FF:=(\F_t)_{0 \leq t\leq T}$, where $\F_t$ is the $\sigma$-field generated by 
$\{W(s)\,|\,0\leq s\leq t\}$ completed with the $\PP$-null sets. Let $\A$ be the set of $\FF$-progressively measurable $A$-valued control processes (see, e.g.,~\cite[Definition 1.11]{MR1121940}) and, given $(t,x)\in\overline{Q}_T$ and $\alpha \in \A$, let $Y^{t,x,\alpha}$ be the unique solution to 
\begin{equation}
\label{controlled_eq}
\begin{aligned}
\dd Y(s)&=b(s,Y(s),\alpha(s))\dd s+\sigma(s,Y(s),\alpha(s))\dd W(s)\quad\text{for }s\in(t,T),\\
Y(t)&=x.
\end{aligned}
\end{equation}
It follows from~{\bf(H1)} that $Y^{t,x,\alpha}$ is well-defined for every $\alpha\in\A$ (see, e.g.,~\cite[Chapter 1, Theorem 6.16]{YZ99}). Define the {\it stopping time} 
\begin{equation}
\tau^{t,x,\alpha}:=\inf\{s\in [t,T]\,|\,Y^{t,x,\alpha}(s)\notin\Omega\},
\end{equation} 
with the convention $\inf\emptyset=+\infty$, and consider the {\it value function} $V\colon\overline{Q}_{T}\to\R$, given by 
\begin{equation}
\label{value_function_exit_from_open}
V(t,x)=\inf_{\alpha\in\A}\EE\Bigg(\int_{t}^{T\wedge\tau^{t,x,\alpha}}
f\left(s,Y^{t,x,\alpha}(s),\alpha(s)\right)\dd s+
\Psi\big(T\wedge\tau^{t,x,\alpha},Y^{t,x,\alpha}(T\wedge\tau^{t,x,\alpha})\big)\Bigg)
\end{equation}
for all $(t,x) \in \overline{Q}_T$. Using  {\bf(H1)} together with the dynamic programming principle (see, for instance, \cite[Chapter V, Section 2]{FS06}), $V$  can be shown to be a viscosity solution to \eqref{parabolic_HJB} in the following sense (see e.g. \cite[Chapter VII, Definition 4.2 and Remark 4.1]{FS06}). 
\begin{dfn}
\label{def_viscosity_solution_Q_T}
 A locally bounded upper {\rm(}resp. lower{\rm)} semicontinuous function $v\colon\overline Q_T\to\R$ is a viscosity subsolution {\rm(}resp. supersolution{\rm)} to \eqref{parabolic_HJB} if for every $(t,x)\in Q_T$ and $\phi\in C^{\infty}(Q_T)$
such that $v-\phi$ has local maximum {\rm(}resp. local minimum{\rm)} at $(t,x)$, one has
\begin{align}
-\partial_{t}\phi(t,x)+\max_{a\in A}\mathcal{L}^{a}(t,x,D_x\phi(t,x),D_x^2\phi(t,x))\leq 0,\nonumber\\ 
\Big(\text{resp. }-\partial_{t}\phi(t,x)+
\max_{a\in A}\mathcal{L}^{a}(t,x,D_x\phi(t,x),D_x^2\phi(t,x))\geq 0\Big).
\end{align}

A locally bounded function $v\colon\overline Q_T\to\R$ is a viscosity solution to
\eqref{parabolic_HJB} if $v^{*}\colon\overline{Q}_T\to\R$ and 
$v_{*}\colon\overline{Q}_T\to\R$, defined by
\begin{equation}
v^{*}(t,x):=\limsup_{(s,y)\to (t,x)}v(s,y)\quad\mbox{and}\quad 
v_{*}(t,x):=\liminf_{(s,y)\to (t,x)}v(s,y)\quad\text{for all }(t,x)\in\overline{Q}_T,
\end{equation}
are, respectively,   sub- and supersolutions to \eqref{parabolic_HJB}.
\end{dfn}

Concerning the boundary condition~\eqref{boundary_conditions}, it follows from~\eqref{value_function_exit_from_open} that $V(t,x)= \Psi(t,x)$ for all $(t,x) \in \partial^* Q_T$. Thus, if $V\in C(\overline{Q}_T)$, then $V=V^{*}=V_{*}$ in $\overline{Q}_T$ and hence, in particular, $V^{*}=V_{*}=\Psi$ in $\partial^* Q_T$. This suggests the following definition.

\begin{dfn}
\label{strong_boundary_conditions} 
\begin{enumerate}[{\rm(i)}]
\item A locally bounded upper {\rm(}resp. lower{\rm)} semicontinuous function
$v\colon\overline Q_T\to\R$ is a viscosity subsolution {\rm(}resp. supersolution{\rm)} to~\eqref{eq:HJB} in the strong sense if it is a subsolution {\rm(}resp. supersolution{\rm)} to~\eqref{parabolic_HJB} and, for every
$(t,x)\in\partial^{*}Q_T$, we have $v(t,x)\leq\Psi(t,x)$ {\rm(}resp.
$v(t,x)\geq\Psi(t,x)${\rm)}. 
\item A locally bounded function $v\colon\overline{Q}_T\to\R$ is a viscosity solution to~\eqref{eq:HJB} in the strong sense if $v^{*}$ and $v_{*}$ are sub- and supersolutions to~\eqref{eq:HJB} in the strong sense, respectively.
\end{enumerate}
\end{dfn}

It follows from the previous discussion that if the value function $V$ is continuous in $\overline{Q}_T$, then it is a viscosity solution to~\eqref{eq:HJB} in the strong sense. Moreover, in this case, the following comparison result shows that $V$ is the unique viscosity solution to~\eqref{eq:HJB} in the strong sense.

\begin{prop}
\label{prop:weak_comparison} 
Assume that {\bf(H1)} hold and that $\partial \Omega$ is of class $C^2$. 
Let $v_{1}\colon\overline{Q}_{T}\to\R$ be upper semicontinuous and let $v_{2}\colon\overline{Q}_{T}\to\R$ be lower semicontinuous. Suppose that $v_1$ and $v_2$ are, respectively, viscosity sub- and supersolutions to~\eqref{eq:HJB} in the strong sense. Then we have $v_{1}\leq v_{2}$ in $\overline{Q}_{T}$.
\end{prop}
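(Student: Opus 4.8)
The plan is to argue by contradiction using the doubling-of-variables method, pushing everything into the interior of $Q_T$, where the viscosity inequalities are available. Suppose that $M:=\max_{\overline{Q}_T}(v_1-v_2)>0$; the maximum is attained since $v_1-v_2$ is upper semicontinuous on the compact set $\overline{Q}_T$. Because $v_1\leq\Psi\leq v_2$ on the parabolic boundary $\partial^{*}Q_T$, we have $v_1-v_2\leq 0$ there, so any maximizer lies in $Q_T=[0,T)\times\Omega$ and has time coordinate strictly less than $T$. Since $-\partial_t v+\max_{a\in A}\mathcal{L}^{a}=0$ is backward in time (the data $\Psi$ being prescribed at $t=T$), I would first turn $v_1$ into a \emph{strict} subsolution by setting $\tilde v_1(t,x):=v_1(t,x)-\gamma/t$ for a small $\gamma>0$. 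A direct computation shows that $\tilde v_1$ satisfies $-\partial_t\tilde v_1+\max_{a\in A}\mathcal{L}^{a}(t,x,D_x\tilde v_1,D_x^2\tilde v_1)\leq -\gamma/t^2$ in the viscosity sense, and that $\tilde v_1(t,x)\to-\infty$ as $t\to 0^{+}$. For $\gamma$ small enough, $\sup_{\overline Q_T}(\tilde v_1-v_2)>0$, and this supremum is attained at some $(t_*,x_*)$ with $t_*\in(0,T)$ and $x_*\in\Omega$, again by the boundary inequalities.

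Next I would double the space variable only, keeping a single time variable; this is the key device to cope with the fact that $b$, $\sigma$, and $f$ are merely continuous (not Lipschitz) in $t$. For $\varepsilon>0$, I consider the maximum over $(0,T]\times\overline\Omega\times\overline\Omega$ of
\[
\Phi_\varepsilon(t,x,y):=\tilde v_1(t,x)-v_2(t,y)-\tfrac{1}{2\varepsilon}|x-y|^2,
\]
attained at some $(t_\varepsilon,x_\varepsilon,y_\varepsilon)$. Standard compactness arguments (see, e.g.,~\cite{CIL92}) give $|x_\varepsilon-y_\varepsilon|^2/\varepsilon\to 0$ and convergence of the maximizers to a diagonal maximizer of $\tilde v_1-v_2$, whence $t_\varepsilon\to t_*>0$ and $x_\varepsilon,y_\varepsilon\to x_*\in\Omega$. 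In particular, for all $\varepsilon$ small, $t_\varepsilon\in(0,T)$ and $x_\varepsilon,y_\varepsilon\in\Omega$, so the interior viscosity inequalities apply. Here the strong boundary conditions are used decisively: a boundary limit point $x_*\in\partial\Omega$ would force $\tilde v_1-v_2\leq-\gamma/t_*<0$, contradicting positivity of the supremum. The $C^2$ regularity of $\partial\Omega$ is available to produce a smooth distance function should one wish to localize near $\partial\Omega$, but the pointwise boundary inequalities already confine the relevant maxima to the interior.

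At this stage I would invoke the parabolic theorem on sums~\cite{CIL92} to obtain $\tau\in\R$ and symmetric matrices $X,Y$ with $(\tau,p,X)\in\overline{\mathcal P}^{2,+}\tilde v_1(t_\varepsilon,x_\varepsilon)$ and $(\tau,p,Y)\in\overline{\mathcal P}^{2,-}v_2(t_\varepsilon,y_\varepsilon)$, where $p=(x_\varepsilon-y_\varepsilon)/\varepsilon$; crucially the time slope $\tau$ is the same for both, because the penalization does not depend on $t$, and $X,Y$ satisfy $\left(\begin{smallmatrix} X & 0 \\ 0 & -Y\end{smallmatrix}\right)\leq\tfrac{3}{\varepsilon}\left(\begin{smallmatrix} I & -I \\ -I & I \end{smallmatrix}\right)$. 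Using these jets in the strict subsolution and supersolution inequalities and subtracting yields
\[
\max_{a\in A}\mathcal{L}^{a}(t_\varepsilon,x_\varepsilon,p,X)-\max_{a\in A}\mathcal{L}^{a}(t_\varepsilon,y_\varepsilon,p,Y)\leq-\frac{\gamma}{t_\varepsilon^2}.
\]
I would then bound the left-hand side from below by $\min_{a\in A}\big[\mathcal{L}^{a}(t_\varepsilon,x_\varepsilon,p,X)-\mathcal{L}^{a}(t_\varepsilon,y_\varepsilon,p,Y)\big]$ and estimate term by term at the common time $t_\varepsilon$: the matrix inequality together with the $x$-Lipschitz bound on $\sigma$ from {\bf(H1)} controls the second-order term by $C|x_\varepsilon-y_\varepsilon|^2/\varepsilon$, the $x$-Lipschitz bound on $b$ controls the first-order term by $L|x_\varepsilon-y_\varepsilon|^2/\varepsilon$, and the uniform continuity of $f$ on the compact set $\overline Q_T\times A$ controls the zeroth-order term by a modulus $\omega_f(|x_\varepsilon-y_\varepsilon|)$. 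All these tend to $0$ as $\varepsilon\to 0$, so the left-hand side is $\geq-\delta(\varepsilon)$ with $\delta(\varepsilon)\to 0$, while $t_\varepsilon\leq T$ gives $-\gamma/t_\varepsilon^2\leq-\gamma/T^2$. The chain $-\delta(\varepsilon)\leq-\gamma/t_\varepsilon^2\leq-\gamma/T^2$ then forces $\delta(\varepsilon)\geq\gamma/T^2>0$, contradicting $\delta(\varepsilon)\to 0$. Hence $M\leq 0$, i.e.\ $v_1\leq v_2$ in $\overline Q_T$.

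I expect the main obstacle to be the boundary reduction step, namely showing that the penalized maxima do not escape to $\partial^{*}Q_T$. This is exactly where the strong form of the boundary condition (the pointwise inequalities $v_1\leq\Psi\leq v_2$) is indispensable: for degenerate equations the boundary data need not be attained, so without assuming it one would be forced to work with the weak (Barles--Perthame) formulation and to construct boundary barriers, using the $C^2$ geometry of $\partial\Omega$ in an essential way. A secondary technical point, which I would bypass by doubling only in space, is the lack of Lipschitz regularity in $t$ of the coefficients; doubling the time variable as well would produce a term of the form $\omega(|t_\varepsilon-s_\varepsilon|)^2/\varepsilon$ that need not vanish, whereas the single-time formulation evaluates both operators at the same $t_\varepsilon$ and sidesteps the difficulty entirely.
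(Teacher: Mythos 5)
The paper offers no proof of this proposition beyond the citation of \cite[Chapter V, Theorem 8.1]{FS06}, so you are supplying an argument where the authors give none. The core of what you propose is the classical doubling-of-variables comparison proof and it is essentially sound: the strictification of the subsolution, the decision to double only in space so that both operators are evaluated at the common time $t_\varepsilon$ (which is indeed what rescues the merely continuous $t$-dependence of $b$, $\sigma$, $f$), the equality of the two time-slopes coming from the parabolic theorem on sums applied with a $t$-independent penalization, and the term-by-term estimates of the second-, first- and zeroth-order differences via the matrix inequality, the Lipschitz bounds in \textbf{(H1)}, and uniform continuity of $f$ are all correct. You are also right that the pointwise boundary inequalities $v_1\leq\Psi\leq v_2$ make the $C^2$ regularity of $\partial\Omega$ superfluous for this strong-sense statement.

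There is, however, one genuine gap: as written, the argument does not establish the inequality at $t=0$, which belongs to $\overline{Q}_T$ (and is the time at which the value function is actually of interest). The penalization $-\gamma/t$ sends $\tilde v_1$ to $-\infty$ as $t\to 0^+$, so the contradiction you derive only shows $v_1(t,x)\leq v_2(t,x)+\gamma/t$ for $t>0$, hence $v_1\leq v_2$ on $(0,T]\times\overline{\Omega}$. Your step ``for $\gamma$ small enough, $\sup_{\overline Q_T}(\tilde v_1-v_2)>0$'' fails precisely when the positive maximum of $v_1-v_2$ is attained only on $\{0\}\times\Omega$: since $v_1-v_2$ is merely upper semicontinuous, its value at $(0,x)$ may strictly exceed $\limsup_{t\to 0^+}(v_1-v_2)(t,x)$, so this case cannot be dismissed by letting $t\to 0^+$, and for such a configuration $\sup(\tilde v_1-v_2)\leq 0$ for every $\gamma>0$, so no contradiction arises. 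The statement is nonetheless true because $\{0\}\times\Omega\subset Q_T$ in the paper's convention, so the viscosity inequalities of Definition~\ref{def_viscosity_solution_Q_T} hold at $t=0$; but your proof never uses them there. The standard repairs are either to replace $-\gamma/t$ by a bounded strictification such as $\gamma(t-T)$, which still yields a uniform margin $\gamma$, and to handle a possible maximizer with $t_\varepsilon=0$ via the endpoint refinement of the parabolic theorem on sums (the time-slope relation degrades to an inequality whose sign is favorable at the endpoint where no data is prescribed), or to keep your argument for $t>0$ and add a separate lemma transferring the sub/supersolution properties at $t=0$ into the missing inequality there. Without one of these, the proof stops short of the claim on all of $\overline{Q}_T$.
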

\begin{proof}
See e.g.~\cite[Chapter V, Theorem 8.1]{FS06}.
\end{proof}

Given $\beta\in (0,1]$, let us define
\begin{equation}
\mathcal{F}_{\beta}(\overline{Q}_T):=\left\{\varphi\in C^{1,2}(\overline{Q}_{T})\, \bigg|\,\sup_{x\in\overline{\Omega},\,(s,t)\in [0,T]^2,\,s\neq t}
\frac{|D_{x}\varphi(t,x)-D_{x}\varphi(s,x)|}{|t-s|^{\beta}}<+\infty \right\} 
\end{equation}
and, for every $\delta>0$, let us set $\Omega_{\delta}:=\{x\in\Omega\,|\,\text{dist}(x,\partial\Omega)<\delta\}$, where, for every $\mathfrak{C}\subset\R^{d}$,  $\text{dist}(x,\mathfrak{C})=\inf_{y\in \mathfrak{C}}|y-x|$ is the distance from $x$ to $\mathfrak{C}$. 

The following assumption, which assumes the existence of a barrier function 
and that the boundary data $\Psi$ can be extended to a smooth subsolution to~\eqref{eq:HJB}, implies the continuity of $V$ on $\overline{Q}_{T}$.
\vspace{0.2cm}

{\bf(H2)} The following hold: 
\begin{enumerate}[{\rm(i)}]
\item There exists $\delta>0$, $\zeta\in C^2(\overline{\Omega}_\delta)$ such that $\zeta=0$ in $\partial\Omega$, $\zeta>0$ 
in $\overline\Omega_{\delta}\setminus\partial\Omega$, and there exists $\eta>0$ such that, for every $(t,x)\in [0,T)\times\Omega_\delta$, there exists $a\in A$ for which the following holds:
\begin{equation}
\label{super_solution_barrier}
-\frac{1}{2}\text{Tr}\big(\sigma \sigma^\top(t,x,a) D^{2}_{x}\zeta(x)\big)
- b(t,x,a)^{\top}D_x\zeta(x)> \eta.
\end{equation}
\item There exists $\beta\in (\frac{1}{2},1]$ and $g\in\mathcal{F}_{\beta}(\overline{Q}_{T})$ such that $g\leq\Psi$ in $\partial ^{*}Q_{T}$, $g=\Psi$ in 
$[0,T] \times \partial\Omega$, and 
\begin{equation}
\label{eq:g_bc}
-\partial_{t}g(t,x)+\sup_{a\in A}\mathcal{L}^{a}(t,x,D_{x}g(t,x), D_{x}^{2}g(t,x))
\leq 0\quad\text{for all }(t,x)\in\overline{Q}_{T}.
\end{equation}
\end{enumerate}
\begin{prop} 
\label{prop:value_function_unique_viscosity_sol}
Assume that {\bf(H1)}-{\bf(H2)} hold and that $\partial \Omega$ is of class $C^2$. Then $V\in C(\overline{Q}_{T})$. In particular, $V$ is the unique continuous viscosity solution to~\eqref{eq:HJB} in the strong sense. 
\end{prop}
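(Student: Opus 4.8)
The plan is to show that the upper and lower semicontinuous envelopes $V^{*}$ and $V_{*}$ coincide, which forces $V$ to be continuous, and then to invoke the comparison principle. Recall that by construction $V_{*}\le V\le V^{*}$ on $\overline Q_T$, that $V=\Psi$ on $\partial^{*}Q_T$, and that, as noted before Definition~\ref{def_viscosity_solution_Q_T}, the dynamic programming principle makes $V^{*}$ a viscosity subsolution and $V_{*}$ a viscosity supersolution of~\eqref{parabolic_HJB} in $Q_T$. Hence it suffices to upgrade these to the strong sense, i.e. to prove the two one-sided boundary estimates
\begin{equation*}
V^{*}(t_0,x_0)\le\Psi(t_0,x_0)\quad\text{and}\quad V_{*}(t_0,x_0)\ge\Psi(t_0,x_0)\qquad\text{for all }(t_0,x_0)\in\partial^{*}Q_T.
\end{equation*}
Once these hold, $V^{*}$ is a strong subsolution and $V_{*}$ a strong supersolution to~\eqref{eq:HJB}, so Proposition~\ref{prop:weak_comparison} yields $V^{*}\le V_{*}$; combined with $V_{*}\le V^{*}$ this gives $V^{*}=V_{*}=V\in C(\overline Q_T)$. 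Uniqueness then follows by applying Proposition~\ref{prop:weak_comparison} to $V$ and any other strong solution in both directions.

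For the lower estimate I would use the smooth subsolution $g$ from {\bf(H2)}(ii). Applying It\^o's formula to $s\mapsto g(s,Y^{t,x,\alpha}(s))$ up to the stopping time $\theta:=T\wedge\tau^{t,x,\alpha}$ and using~\eqref{eq:g_bc} to bound the drift from below by $-f$, one obtains, after taking expectations and discarding the (local) martingale part,
\begin{equation*}
g(t,x)\le\EE\Big(\int_{t}^{\theta}f\big(s,Y^{t,x,\alpha}(s),\alpha(s)\big)\,\dd s+g\big(\theta,Y^{t,x,\alpha}(\theta)\big)\Big).
\end{equation*}
Since $g=\Psi$ on $[0,T]\times\partial\Omega$ and $g\le\Psi$ on $\{T\}\times\overline\Omega$, the terminal term is bounded above by $\EE\,\Psi(\theta,Y^{t,x,\alpha}(\theta))$, so the right-hand side is exactly the cost appearing in~\eqref{value_function_exit_from_open}; taking the infimum over $\alpha\in\A$ gives $g\le V$ on $\overline Q_T$. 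As $g$ is continuous and equals $\Psi$ on $[0,T]\times\partial\Omega$ (which includes the corner $\{T\}\times\partial\Omega$), this immediately yields $V_{*}\ge g=\Psi$ at every lateral and corner boundary point.

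The upper estimate, together with the lower one at the remaining final-time points $\{T\}\times\Omega$, is where the real work lies. At a lateral point $(t_0,x_0)\in[0,T)\times\partial\Omega$ I would build, for each nearby $(t,x)$, a near-optimal control forcing a fast exit: selecting at each state the control $a$ for which the barrier $\zeta$ from {\bf(H2)}(i) satisfies~\eqref{super_solution_barrier}, It\^o's formula gives
\begin{equation*}
\EE\,\zeta\big(Y(s\wedge\tau_\delta)\big)\le\zeta(x)-\eta\,\EE\big(s\wedge\tau_\delta-t\big),
\end{equation*}
where $\tau_\delta$ is the exit time from $\Omega_\delta$; since $\zeta\ge0$, this bounds $\EE(\tau_\delta-t)\le\zeta(x)/\eta\to0$ as $x\to\partial\Omega$. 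A moment estimate for $Y$ (available from {\bf(H1)}) then shows that, with probability tending to one, the trajectory leaves $\Omega$ through $\partial\Omega$ near $x_0$ rather than reaching the inner boundary $\{\mathrm{dist}(\cdot,\partial\Omega)=\delta\}$, so by continuity of $f$ and $\Psi$ the cost of this control converges to $\Psi(t_0,x_0)$, the small-probability bad event contributing $o(1)$ because $f$ and $\Psi$ are bounded. This gives $V(t,x)\le\Psi(t_0,x_0)+o(1)$, hence $V^{*}\le\Psi$ on the lateral boundary and, a fortiori, at the corner. For $(T,x_0)$ with $x_0\in\Omega$, standard short-time SDE estimates from {\bf(H1)} show that over the vanishing horizon $T-t$ the trajectory stays near $x_0$ inside $\Omega$ with high probability, so both envelopes are squeezed to $\Psi(T,x_0)$.

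The main obstacle is precisely this exit-time analysis near the lateral boundary: one must turn the pointwise selection in~\eqref{super_solution_barrier} into an admissible (measurably selected, possibly feedback) control in $\A$, make the estimate $\EE(\tau_\delta-t)\le\zeta(x)/\eta$ uniform as $(t,x)\to(t_0,x_0)$, and control the probability of the ``wrong'' exit through the interior of $\Omega_\delta$, all while keeping the terminal-time effects of $\theta=T\wedge\tau$ under control near the corner $\{T\}\times\partial\Omega$. These are the delicate points; once the boundary estimates are secured, the remainder reduces to the comparison principle already recorded in Proposition~\ref{prop:weak_comparison}.
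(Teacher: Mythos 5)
Your overall strategy is sound, but it is worth saying up front that the paper does not actually prove this proposition: it simply cites \cite[Chapter V, Theorem 2.1 and Chapter IX, Remark 5.1]{FS06} for the continuity of $V$ and then invokes Proposition~\ref{prop:weak_comparison} for uniqueness. What you propose is therefore a reconstruction of the content behind that citation, and your route differs from the cited one in the interior: Fleming--Soner obtain continuity of $V$ on $\overline{Q}_T$ by direct probabilistic estimates on $|V(t,x)-V(t',x')|$, whereas you only prove continuity \emph{at the boundary} (the two one-sided estimates $V^{*}\leq\Psi\leq V_{*}$ on $\partial^{*}Q_T$) and then let the comparison principle of Proposition~\ref{prop:weak_comparison} do the work in the interior via $V^{*}\leq V_{*}$. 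This is a legitimate and arguably cleaner division of labour --- it is exactly the Barles--Perthame half-relaxed-limit strategy that the authors themselves deploy later for the scheme in Theorem~\ref{theo:main_result_1} --- and your two boundary arguments are the right ones: the verification inequality $g\leq V$ obtained from It\^o's formula and~\eqref{eq:g_bc} handles the lateral boundary from below, the barrier $\zeta$ of {\bf(H2)}(i) handles it from above, and short-horizon moment estimates handle $\{T\}\times\Omega$.

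The one step you flag but do not resolve is genuinely the crux, so let me record how it is normally closed, since as stated your ``select at each state the control $a$'' produces a feedback law whose associated SDE need not admit a solution in $\A$. The standard fix avoids measurable selection altogether: fix the target point $(t_0,x_0)\in[0,T)\times\partial\Omega$, pick one $a_0\in A$ realizing~\eqref{super_solution_barrier} at (a point of $\Omega_\delta$ near) $(t_0,x_0)$, and observe that, by continuity of $b$, $\sigma$, $D_x\zeta$, and $D^2_x\zeta$, the same inequality with $\eta$ replaced by $\eta/2$ holds for all $(s,y)$ in a fixed neighbourhood $\mathcal{N}$ of $(t_0,x_0)$. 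Running the \emph{constant} control $a_0$ (which is trivially in $\A$) up to the exit time from $\mathcal{N}\cap\Omega$ then yields $\EE(s\wedge\tau_{\mathcal N}-t)\leq 2\zeta(x)/\eta$ uniformly for starting points in $\mathcal{N}$, and the bounded-coefficient moment estimates confine the exit point to a small neighbourhood of $x_0$ on $\partial\Omega$ up to an event of vanishing probability, exactly as you describe. With this modification your argument is complete and consistent with the cited proof; without it, the passage from the pointwise condition {\bf(H2)}(i) to an admissible control is a real gap.
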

\begin{proof}
The first result follows from~\cite[Chapter V, Theorem 2.1 and Chapter IX, Remark 5.1]{FS06}, while the second one follows from  Proposition~\ref{prop:weak_comparison}.
\end{proof}
\begin{rem} 
{\rm(i)} The result in~\cite[Chapter V, Theorem 2.1]{FS06} shows that the
continuity of $V$ also holds if {\bf(H2)} is satisfied with $\eta=0$ in~\eqref{super_solution_barrier}. See also~\cite{MR2763549} for a continuity result of the value function under more general assumptions. On the other hand, and as
in~\cite[Chaper IX, Theorem 5.2]{FS06}, the strict positivity of $\eta$ in \eqref{super_solution_barrier} will play an important role in our convergence result in Theorem~\ref{theo:main_result_1}. We refer the reader to~\cite[Chapter IX, Section 5]{FS06} for some examples of data satisfying {\bf(H2)(i)} {\rm(}including the case of non-degenerate second-order operators{\rm)}. \medskip\\
{\rm(ii)} {\rm(Homogeneous boundary condition)} If $\Psi=0$ in $[0,T]\times\partial\Omega$, $f\geq 0$, and $\Psi(T,\cdot)\geq 0$, then {\bf(H2)}{\rm(ii)} holds with $g\equiv 0$.\medskip\\
{\rm(iii)} {\rm(Smooth solutions)}  If~\eqref{eq:HJB} admits a classical solution $v\in\F_{\beta}(\overline{Q}_{T})$, for some $\beta\in(\frac{1}{2},1]$, then {\bf(H2)}{\rm(ii)} automatically holds with $g=v$.  
\end{rem}

As the following example shows, if {\bf(H2)} is not satisfied, the value function can fail to be continuous. 
\begin{example}\label{ex1}
Suppose that $d=1$, $\Omega=(0,1)$, $T=1$, $m=1$, $A=[-1,1]$, and define $\Psi\colon\partial^*Q_{1}\to\R$ as $\Psi(t,x)=(1-t)(1-x)$ for all $(t,x)\in\partial^*Q_{1}$. We also set $b(t,x,a)=a$, $\sigma(t,x,a)=0$, and $f(t,x,a)=0$ for all $(t,x)\in\overline{Q}_{1}$ and $a\in A$. In this case, one checks that $V$, defined in~\eqref{value_function_exit_from_open}, is given by
\begin{equation}
\label{eq:explicit_V_example}
V(t,x)=
\begin{cases}
0 &\text{if }(t,x)\in[0,1]\times(0,1],\\
1-t&\text{if }(t,x)\in[0,1]\times\{0\},
\end{cases}
\end{equation}
which is discontinuous at $(t,0)$ for every $t\in[0,1)$. Notice that, in this case, {\bf(H2)}{\rm(i)} holds but {\bf(H2)}{\rm(ii)} does not as, if $g\in\F_{\beta}$ {\rm(}for some $\beta\in(\frac{1}{2},1]${\rm)} satisfies $g=\Psi$ in $\partial^*Q_{1}$ and 
$$
-\partial_t g(t,x) +\sup_{a\in[-1,1]}aD_{x}g(t,x) \leq 0\quad\text{for all }(t,x)\in\overline{Q}_1,
$$
then, taking $a=0$ yields that, for every $x\in [0,1]$, $g(\cdot,x)$ is increasing. This contradicts the equalities $g(0,0)=\Psi(0,0)=1$ and $g(1,0)=\Psi(1,0)=0$. 
\end{example}

Note that, by Definition~\ref{strong_boundary_conditions},  if $V$ is not continuous in $[0,T) \times \partial \Omega$, then it cannot be a viscosity solution to~\eqref{eq:HJB} in the strong sense. However, in this case one can still show that $V$ solves~\eqref{eq:HJB} in a weaker sense, which can be motivated as follows. If $V^{*}(t,x)>\Psi(t,x)$  at  some $(t,x)\in [0,T) \times \partial \Omega$, it can be shown that for any $\phi \in C^{\infty}(\overline{Q}_T)$ such that $V^{*} - \phi$ has a local maximum at $(t,x)$ one has
$$ 
-\partial_t\phi(t,x)+\sup_{a\in A} \mathcal{L}^{a}(t, x,D_x\phi(t,x) , D_x^2\phi(t,x)) \leq 0$$
(see e.g.~\cite[Chapter 5]{Barles_livre_94} for a proof in the deterministic framework). Similarly, if at some $(t,x) \in [0,T) \times \partial \Omega$ the inequality $V_{*}(t,x) \geq \Psi(t,x)$ does not hold,  then, for any $\phi \in C^{\infty}(\overline{Q}_T)$ such that $V_{*} - \phi$ has a local minimum at $(t,x)$ one has
$$ - \partial_t\phi(t,x)+\sup_{a\in A} \mathcal{L}^{a}(t, x,D_x\phi(t,x) , D_x^2\phi(t,x)) \geq 0.$$

The previous discussion motivates the following definition.
\begin{dfn}
\label{weak_boundary_conditions}  
{\rm(i)} A locally bounded upper {\rm(}resp. lower{\rm)} semicontinuous function $v\colon\overline Q_T\to \R$ is a viscosity subsolution {\rm(}resp. supersolution{\rm)} to \eqref{eq:HJB} in the weak sense if it is a viscosity subsolution {\rm(}resp. supersolution{\rm)} to \eqref{parabolic_HJB} and the following properties hold: \smallskip\\
{\rm(i.1)}  for any $x\in \overline{\Omega}$ one has $v(T,x) \leq \Psi(T,x)$  {\rm(}resp. $v(T,x) \geq \Psi(T,x)${\rm)}; \smallskip\\
{\rm(i.2)} for any $(t,x)\in  \partial^*Q_T$ with $t<T$ and any $\phi \in C^{\infty}(\overline{Q}_T)$ such that $v-\phi$ has a local maximum {\rm(}resp. local minimum{\rm)} at $(t,x)$ one has
$$\begin{array}{l}
 \min  \Big(  -\partial_t \phi(t,x)+\sup_{a\in A} \mathcal{L}^{a}(t, x,D_x\phi(t,x) , D_x^2\phi(t,x)) , v - \Psi\Big)  \leq 0, \\[4pt]
 \Big(\text{resp. }    \max \Big(- \partial_t\phi(t,x)+\sup_{a\in A} \mathcal{L}^{a}(t, x,D_x\phi(t,x) , D_x^2\phi(t,x)),v-\Psi\Big)  \geq 0\Big).
\end{array}
$$
{\rm(ii)} A locally bounded function  $v\colon\overline Q_T\to \R$ is a viscosity solution to \eqref{eq:HJB} in the weak sense if $v^*$ and $v_{*}$ are sub- and supersolutions to \eqref{eq:HJB} in the weak sense, respectively.
\end{dfn}

The above notion of discontinuous viscosity solution was introduced, in the framework of first-order equations, independently by Barles and Perthame in \cite{BP88} and by Ishii in \cite{I89}.
As explained above, the value function $V$ solves~\eqref{eq:HJB} in the weak sense, which settles the question of existence of a solution. On the other hand, the uniqueness of a viscosity solution to \eqref{eq:HJB}  usually follows from a comparison principle between its weak sub- and supersolutions. 
\begin{dfn}
\label{strong_comparison_principle}
We say that a strong comparison principle holds for \eqref{eq:HJB} if for any sub- and supersolutions $v_1$ and $v_2$, respectively, of  \eqref{eq:HJB} in the weak sense, one has $v_1 \leq v_2$ in $[0,T]\times\Omega$. 
\end{dfn}
The comparison principle in  Definition \ref{strong_comparison_principle} is stronger than the one in Proposition~\ref{prop:weak_comparison}, used to establish the uniqueness of strong viscosity solutions. It plays an important role in the convergence analysis of the scheme that we propose when the value function $V$ may fail to be continuous in $[0,T]\times\partial\Omega$.  Assumptions on the data near the boundary ensuring a strong comparison principle are studied in \cite{BB95, BR98} for elliptic equations in smooth domains. Some extensions of these results to the case of nonsmooth domains and parabolic equations have been addressed in~\cite{MR2111030}. 

A straightforward consequence of the strong comparison principle is the uniqueness of viscosity solutions to~\eqref{eq:HJB} in the weak sense when restricted to $[0,T]\times\Omega$. 

\begin{prop}
\label{prop:uniqueness_sol_weak_sense} 
Assume that {\bf(H1)} and the strong comparison principle hold. Let $v\colon\overline Q_T\to \R$ be a viscosity solution to \eqref{eq:HJB} in the weak sense. Then it holds that 
$$
v(t,x)=V(t,x)\quad\text{for all }(t,x)\in[0,T]\times\Omega,
$$
where $V$ is the value function defined in~\eqref{value_function_exit_from_open}.
\end{prop}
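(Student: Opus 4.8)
```latex
The plan is to invoke the strong comparison principle (Definition~\ref{strong_comparison_principle}) against the value function $V$ itself. The key observation is that, as explained in the paragraph preceding Definition~\ref{weak_boundary_conditions}, the value function $V$ defined in~\eqref{value_function_exit_from_open} is \emph{itself} a viscosity solution to~\eqref{eq:HJB} in the weak sense. Thus $V^*$ is a weak subsolution and $V_*$ is a weak supersolution. Given an arbitrary weak viscosity solution $v$, we know that $v^*$ is a weak subsolution and $v_*$ is a weak supersolution.

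First I would apply the strong comparison principle to the pair $(v^*, V_*)$: since $v^*$ is a weak subsolution and $V_*$ is a weak supersolution, the principle yields $v^* \leq V_*$ in $[0,T]\times\Omega$. Next, I would apply it to the pair $(V^*, v_*)$, where $V^*$ is a weak subsolution and $v_*$ is a weak supersolution, obtaining $V^* \leq v_*$ in $[0,T]\times\Omega$. Chaining these two inequalities with the trivial pointwise relations $V_* \leq V^*$ and $v_* \leq v^*$, valid by the definitions of the lower and upper semicontinuous envelopes, gives on $[0,T]\times\Omega$:
\begin{equation*}
v^* \leq V_* \leq V^* \leq v_* \leq v^*.
\end{equation*}
Since the chain begins and ends with $v^*$, all four quantities coincide on $[0,T]\times\Omega$; in particular $v_* = v^*$, so $v$ is continuous there and equals the common value $V_* = V^*$, which is exactly $V$. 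This establishes $v(t,x) = V(t,x)$ for all $(t,x)\in[0,T]\times\Omega$.

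The only genuine point requiring care is the justification that $V$ is a weak viscosity solution, so that $V^*$ and $V_*$ may legitimately serve as a comparison subsolution and supersolution. This is precisely the content of the discussion motivating Definition~\ref{weak_boundary_conditions}: $V$ solves~\eqref{parabolic_HJB} in the viscosity sense by the dynamic programming principle, the final-time condition $V(T,\cdot)=\Psi(T,\cdot)$ holds by~\eqref{value_function_exit_from_open}, and the weak boundary inequalities~(i.2) follow from the cited arguments in the deterministic framework of~\cite[Chapter 5]{Barles_livre_94}, which extend to the present stochastic setting. I would simply reference that preceding discussion rather than reprove it. I do not anticipate any obstacle beyond this structural bookkeeping; the result is a direct and standard consequence of comparison once the weak-solution property of $V$ is in hand.
```
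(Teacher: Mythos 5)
Your proposal is correct and is essentially identical to the paper's proof: the authors likewise use that $V$ solves~\eqref{eq:HJB} in the weak sense and apply the strong comparison principle twice to obtain the chain $v^*\leq V_*\leq V^*\leq v_*\leq v^*$ in $[0,T]\times\Omega$, from which equality follows. Your additional remark justifying why $V$ is a weak solution (via the discussion preceding Definition~\ref{weak_boundary_conditions}) is exactly the implicit ingredient the paper relies on.
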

\begin{proof}
Since $v$ and $V$ solve \eqref{eq:HJB} in the weak sense, Definition~\ref{weak_boundary_conditions}{\rm(iii)} and the comparison principle imply that $v^*\leq V_{*}\leq V^{*}\leq v_{*}\leq v^*$ in $[0,T]\times\Omega$. The result follows. 
\end{proof}

\section{Revisiting the Milstein-Tretyakov scheme in the linear case} 
\label{sec:milstein}
In this section, we first recall the standard fully discrete SL scheme to solve a linear version of~\eqref{parabolic_HJB} when $Q_{T}$ is replaced by $[0,T)\times\R^d$ (see, e.g.,~\cite{CamFal95,DebrJako13}). Next, we recall the semi-discrete scheme proposed in~\cite{MR1867423} to solve~\eqref{eq:HJB} in the one-dimensional and linear case and we provide an extension to the
$d$-dimensional, but still linear, case. The corresponding fully discrete scheme, described in Section~\ref{fully_discrete_linear_case_section}, will be the basis of the one that we will propose in Section~\ref{sec:fullydiscrete} to approximate solutions
to~\eqref{eq:HJB}.  
  
\subsection{The classical SL scheme in $[0,T]\times\R^d$} 
Let us consider the linear parabolic equation 
\begin{subequations}
\label{eq:linear_equaation_R_d}
\begin{align}
-\partial_{t}v+\mathcal{L}(t, x,D_xv, D^2_{x}v)&=0
\quad\text{in }[0,T)\times\R^d, \label{parabolic_HJB_unbounded}\\
v(T,\cdot)&=g(\cdot)\quad\text{in }\R^d,\label{boundary_conditions_unbounded}
\end{align}
\end{subequations}
where $\mathcal{L}\colon[0,T]\times\R^d\times\R^d\times\R^{d\times d}\to\R$ is defined by 
\begin{multline}
\label{linear_operator}
\mathcal{L}(t,x,q,P)=-\frac{1}{2}\text{Tr}\big(\sigma \sigma^\top(t,x)P\big)
-b(t,x)^{\top}q-f(t, x),\\
\text{for all }(t,x,q,P)\in [0,T]\times\R^d\times\R^d\times\R^{d\times d}, 
\end{multline}
with $b\colon[0,T]\times\R^d\to\R^d$, $\sigma\colon[0,T]\times\R^d\to\R^{d\times p}$, $f\colon[0,T]\times\R^d\to\R$, and $g\colon\R^d\to\R$ being bounded and continuous.  
 
Let us first consider a time discretization of~\eqref{eq:linear_equaation_R_d}.  Let $N\in\mathbb{N}$ be the number of time steps, set $\Delta t=T/N$, $\mathcal{I}_{\Dt}=\{0,\hdots,N\}$, 
$\mathcal{I}^{*}_{\Delta t}=\{0,\hdots,N-1\}$, and $t_k:=k\Delta t$
for $k\in \I_{\Delta t}$. Given $k\in\mathcal{I}^{*}_{\Delta t}$, $x\in \R^d$,
and $\ell\in\mathcal{I}:=\{1,\hdots,p\}$, we set 
\begin{equation}
\label{discrete_characteristics_unbounded_case}
y^{\pm,\ell}_{k}(x)=x+\Delta t b(t_k,x)\pm\sqrt{p\Dt}\sigma^{\ell}(t_k,x),
\end{equation}
where $\sigma^\ell(t_k,x)$ denotes the $\ell$-th column of the matrix $\sigma(t_k,x)$. Given a set $X$ we denote by $B(X)$ the set of bounded real-valued functions defined on $X$.  The discrete time, also called semi-discrete, SL approximation $(V_k)_{k=0}^{N} \in B(\R^d)^{N+1}$ is given by the following relation, which is solved backward in time, 
\begin{align}
\label{eq:def_Slin}
V_k(x)&=\mathcal{S}_{k}^{\text{sd}}(V_{k+1},x)\quad\text{for all }
k\in\mathcal{I}^{*}_{\Delta t},\,x\in\R^d,\nonumber\\
V_{N}(x)&=g(x)\quad\text{for all }x\in \R^d,
\end{align}
where, for every $k\in\mathcal{I}_{\Delta t}^*$,   $\mathcal{S}_{k}^{\text{sd}}\colon B(\R^d)  \times\R^d\to \R$ is defined by 
\begin{equation}
\label{forward_operator_unbounded_case}
\mathcal{S}_{k}^{\text{sd}}(\phi,x)=\frac{1}{p}\sum_{\ell=1}^{p}
\Big(\gamma_k^{+,\ell} \phi( y^{+,\ell}_k(x)) + \gamma_k^{-,\ell} \phi( y^{-,\ell}_k (x))\Big)+\tau_{k}f(t_{k},x),
\end{equation}
with 
\begin{equation}
\label{weights_unconstrained_case} 
\gamma_{k}^{+,\ell}=1/2,\quad\gamma_{k}^{-,\ell}=1/2,\quad\text{for all }
\ell=1,\hdots,p,\;\mbox{and }\tau_{k} =\Dt. 
\end{equation} 
By a second-order Taylor expansion, one can check the {\it consistency} of~\eqref{eq:def_Slin} with~\eqref{parabolic_HJB_unbounded}. More precisely, for every $\beta\in(\frac{1}{2},1]$ and $\varphi\in \mathcal{F}_{\beta}(\overline{Q}_T)$, with  compact support,  there exists  $\epsilon\colon[0,+\infty)\to [0,+\infty)$ (depending only on $\varphi$) such that
$\epsilon(\Dt)\to 0$, as $\Delta t\to 0$, and, for every $k\in\mathcal{I}^{*}_{\Dt}$, and $x\in \R^d$, one has
\begin{equation}\label{consistency_linear_unbounded_case}
\left| \frac{\varphi(t_{k},x) -  \mathcal{S}_{k}^{\text{sd}}(\varphi(t_{k+1},\cdot),x)}{\tau_k} - \left(-\partial_{t} \varphi(t_k,x) +\mathcal{L}(t_k, x, D_x \varphi(t_k,x) , D_{x}^2 \varphi(t_k,x))\right) \right|\leq \epsilon(\Dt).
\end{equation} 
\subsection{A variation of the Milstein-Tretyakov scheme in $Q_T$} Let us begin by recalling the scheme proposed in  \cite{MR1867423} for the linear equation
\begin{align}
\label{eq:linear}
-\partial_{t}v+\mathcal{L}(t,x,v_{x},v_{xx})&=0\quad\text{in }Q_T,
\\
v&=\Psi\quad\text{on }\partial^{*}Q_T,
\end{align}
in the one-dimensional case. More precisely, let us take $d=p=1$,
$Q_{T}=[0,T)\times\mathfrak{I}$, with $\mathfrak{I}\subset\R$ being an open interval, $\mathcal{L}$ given by~\eqref{linear_operator},  and $\Psi\colon\partial^{*}Q_T\to\R$ continuous.
In this one-dimensional framework, for simplicity and following the standard notation, we have denoted $\phi_{x}(t,x)=D_{x}\phi(t,x)$ and $\phi_{xx}(t,x)=D^{2}_{x}\phi(t,x)$.  

In order to obtain a modification of the semi-discrete scheme~\eqref{eq:def_Slin}, incorporating the boundary data $\Psi$, and such that a consistency estimate, similar 
to~\eqref{consistency_linear_unbounded_case}, holds,  Milstein and Tretyakov have proposed in~\cite{MR1867423} to truncate the discrete characteristics $y^{\pm}$, defined
in~\eqref{discrete_characteristics_unbounded_case}, and to modify the parameters 
$\gamma^+$, $\gamma^-$, and $\tau$ in~\eqref{forward_operator_unbounded_case}-\eqref{weights_unconstrained_case} (here we drop the index $\ell$ having $p=1$). More precisely, for $k\in\mathcal I_{\Delta t}^{*}$ and $x\in\mathfrak{I}$, let us define 
\begin{equation}
\label{definicion_lambda_caso_lineal}
\lambda^{\pm}_k(x)=\min\Big\{\inf\{\lambda\in [0,1]\,|\,x+\lambda\Delta t 
b(t_k,x)\pm\sqrt{\lambda\Dt}\sigma(t_k,x)\notin \mathfrak{I}\},1\Big\}.
\end{equation}
Notice that, by definition, $\lambda^{\pm}_k(x)\in(0,1]$. Let us also set  
\begin{equation}
\label{definizione_y_k_pm}
y^{\pm}_k(x)=x+\lambda^{\pm}_{k}(x)\Delta t b(t_{k},x)\pm\sqrt{\lambda^{\pm}_{k}(x)\Delta t}\sigma(t_k,x).
\end{equation}
 We consider the semi-discrete scheme
\begin{align}
\label{linear_scheme_bounded_case}
V_{k}(x)&=\mathcal{S}_{k}^{\text{sd}}(V_{k+1},x)\quad\text{for all }
k\in\mathcal{I}^{*}_{\Delta t},\,x\in\mathfrak{I},\nonumber\\
V_{k}(x)&=\Psi(t_k,x)\quad\text{for all }k\in\mathcal{I}^{*}_{\Dt},\,x\in\partial \mathfrak{I},\nonumber\\
V_{N}(x)&=\Psi(T,x)\quad\text{for all }x\in \overline{\mathfrak{I}},
\end{align}
where, for every $\phi\in\mathcal{B}(\overline{\mathfrak{I}})$, $k\in\mathcal{I}^{*}_{\Delta t}$, and
$x\in\mathfrak{I}$, $\mathcal{S}_{k}^{\text{sd}}(\phi,x)$ is redefined as
\begin{equation}
\label{Sh_coefficient_determination}
\mathcal{S}_{k}^{\text{sd}}(\phi,x)=\gamma^{+}_{k}(x)\tilde{\phi}^{+}_{k}(x)
+\gamma^{-}_{k}(x)\tilde{\phi}^{-}_{k}(x)+\tau_{k}(x)f(t_{k},x),
\end{equation}
with 
\begin{equation}
\label{definition_hat_linear_case}
\tilde{\phi}^{\pm}_{k}(x):=
\begin{cases}
\phi(y^{\pm}_{k} (x))&\text{if }\lambda^{\pm}_{k}(x)=1,\\
\Psi\big(t_{k}+\lambda^{\pm}_{k}(x)\Delta t,y^{\pm}_{k}(x)\big)&\text{otherwise},
\end{cases}
\end{equation}
and the scalars $\gamma^{\pm}_k(x)$, $\tau_{k}(x)\in (0,+\infty)$ in~\eqref{Sh_coefficient_determination} are determined by consistency considerations. For this purpose, and for the sake of simplicity, let us assume that $b\equiv 0$, $\sigma$ constant, and take $\varphi\in C^{\infty}(\overline{Q}_{T})$ such that $\varphi=\Psi$ in
$[0,T]\times\partial\mathfrak{I}$. For all $(k,x)\in\mathcal{I}^*_{\Delta t}\times\mathfrak{I}$, a second-order Taylor expansion yields 
\begin{align}
\label{sviluppo_dim_1_caso_b_zero}
\varphi(t_{k}+\lambda^{\pm}_{k}(x)\Delta t,y^{\pm}_{k}(x))&=
\varphi(t_{k},x)+\lambda^{\pm}_k(x)\Delta t\partial_{t}\varphi(t_k,x)\pm \sqrt{\lambda_{k}^{\pm}(x)\Delta t}\varphi_{x}(t_k,x)\sigma\nonumber\\
&\hspace{0.3cm}+\frac{\lambda_{k}^{\pm}(x)\Delta t}{2}\varphi_{xx}(t_k,x)\sigma^2
+\lambda_{k}^{\pm}(x)\Delta t\epsilon_{k}^{\pm}(\Delta t,x), 
\end{align}
where $|\epsilon_{k}^{\pm}(\Delta t,x)|\leq C_{\varphi}\sqrt{\Delta t}$, with
$C_{\varphi}\in (0,+\infty)$ being independent of $(k,x)\in\mathcal{I}^{*}_{\Delta t}
\times \mathfrak{I}$. It follows that 
\begin{multline}
\label{sums_gamma_varphi_one_dimensional_case}
\gamma^{+}_{k}(x)\varphi(t_{k}+\lambda^{+}_{k}(x)\Delta t,y^{+}_{k}(x))
+\gamma^{-}_{k}(x)\varphi(t_{k}+\lambda^{-}_{k}(x)\Delta t,y^{-}_{k}(x))\\
=(\gamma_{k}^{+}(x)+\gamma_{k}^{-}(x))\varphi(t_k,x)+
\Big(\gamma_{k}^{+}(x)\sqrt{\lambda_{k}^{+}(x)\Delta t}
-\gamma_{k}^{-}(x)\sqrt{\lambda_{k}^{-}(x)\Delta t}\Big)\varphi_{x}(t_k,x)\sigma\\
+\big(\gamma_{k}^{+}(x)\lambda_{k}^{+}(x)\Delta t
+\gamma_{k}^{-}(x)\lambda_{k}^{-}(x)\Delta t\big)\Big(\partial_{t}\varphi(t_{k},x)+\frac{\sigma^2}{2}\varphi_{xx}(t_{k},x)\Big)\\
+\gamma^{+}_{k}(x)\lambda^{+}_{k}(x)\Delta t\epsilon_{k}^{+}(\Delta t,x)
+\gamma^{-}_{k}(x)\lambda^{-}_{k}(x)\Delta t \epsilon_k^{-}(\Delta t,x),
\end{multline}
with 
\begin{equation} 
\label{errors_one_dimensional_case}
\lim_{\Delta t\to 0}\sup_{k\in\mathcal{I}_{\Delta t}^{*}, x\in\mathfrak{I}}
\Big\{|\epsilon_{k}^{+}(\Delta t,x)|\vee |\epsilon_{k}^{-}(\Delta t,x)|\Big\}=0. 
\end{equation}
Thus, by setting 
\begin{equation}
\label{weights_uni_dimensional_case}
\gamma^{\pm}_{k}(x)=\frac{\sqrt{\lambda^{\mp}_{k}(x)}}{\sqrt{\lambda^{+}_{k}(x)}
+\sqrt{\lambda^{-}_{k}(x)}}\quad\text{and}\quad\tau_{k}(x)=
\gamma_{k}^{+}(x)\lambda^{+}_{k}(x)\Delta t+\gamma^{-}_{k}(x)\lambda^{-}_{k}(x)\Delta t,
\end{equation}
we get
\begin{multline}
\gamma^{+}_{k}(x)\varphi\big(t_{k}+\lambda^{+}_{k}(x)\Delta t, y^{+}_{k}(x)\big) 
+\gamma^{-}_{k}(x)\varphi\big(t_{k}+\lambda^{-}_{k}(x)\Delta t,y^{-}_{k}(x)\big)\\
=\varphi(t_{k},x)+\tau_{k}(x)\bigg(\partial_{t}\varphi(t_{k},x) 
+\frac{\sigma^{2}}{2}\varphi_{xx}(t_{k},x)\bigg)+\gamma^{+}_{k}(x)\lambda^{+}_{k}(x)\Delta t\epsilon_{k}^{+}(\Delta t,x)+\gamma^{-}_{k}(x)\lambda^{-}_{k}(x)\Delta t\epsilon_{k}^{-}(\Delta t,x),
\end{multline}
and, in view of~\eqref{errors_one_dimensional_case}, we get the following consistency result
\begin{multline*}
\lim_{\Delta t\to 0}\sup_{k\in\I^{*}_{\Delta t},x\in\mathfrak{I}}\Bigg|\frac{\varphi(t_{k},x)- \mathcal{S}^{\rm sd}_{k}(\varphi(t_{k+1},\cdot),x)}{\tau_{k}(x)}
-\big(-\partial_{t}\varphi(t_{k},x)+\mathcal{L}(t_{k},x,\varphi_{x}(t_{k},x),
\varphi_{xx}(t_{k},x))\big)\Bigg|=0.
\end{multline*}

Now, let us discuss an extension of the previous semi-discrete scheme to the multi-dimensional case, i.e. $Q_T=[0,T)\times\Omega$, with $\Omega\subset\R^d$ ($d\geq 2$) being a nonempty and bounded domain. Let $k\in\mathcal{I}^{*}_{\Dt}$, $x\in\Omega$,
$\ell\in\mathcal{I}$, and define 
\begin{equation}
\label{lambda}
\lambda^{\pm,\ell}_{k}(x)=\min\Big\{\inf\{\lambda\in [0,1]\,\big|\,x+\lambda\Delta t b(t_{k},x)\pm\sqrt{p\lambda\Delta t}\sigma^{\ell}(t_{k},x)\notin\Omega\},1\Big\}.
\end{equation}
As in the one-dimensional case, we have $\lambda^{\pm,\ell}_{k}(x)\in(0,1]$. Let us set
\begin{equation}
\label{eq:yell}
y^{\pm,\ell}_{k}(x)=x+\lambda^{\pm}_{k}(x)\Delta t b(t_{k},x)\pm
\sqrt{p\lambda^{\pm}_{k}(x)\Delta t}\sigma^{\ell}(t_{k},x).
\end{equation}
The choices~\eqref{weights_uni_dimensional_case}, in the one-dimensional case, suggest to define
\begin{multline}
\label{weights_multi_dimensional_case}
\gamma^{\pm,\ell}_{k}(x)=\frac{\sqrt{\lambda^{\mp,\ell}_{k}(x)}}{\sqrt{\lambda^{+,\ell}_{k}(x)}+\sqrt{\lambda^{-,\ell}_{k}(x)}}\\
\mbox{and}\quad\tau^{\ell}_{k}(x)=\gamma_{k}^{+,\ell}(x)\lambda^{+,\ell}_{k}(x)\Delta t
+\gamma^{-,\ell}_{k}(x)\lambda^{-,\ell}_{k}(x)\Delta t=\Delta t\sqrt{\lambda^{+,\ell}_{k}(x)\lambda^{-,\ell}_{k}(x)},
\end{multline}
which is supported by the following extension of~\eqref{sums_gamma_varphi_one_dimensional_case}, whose technical proof is deferred to the Appendix at the end of the paper. 
\begin{lem}  
\label{Taylor_dim_1}
Let $\beta\in(\frac{1}{2},1]$ and let $\varphi\in\mathcal{F}_{\beta}(\overline{Q}_T)$. Then, for every $k\in\I_{\Delta t}^{*}$, $x\in\Omega$, and $\ell\in\mathcal{I}$, the following holds: 
\begin{multline}
\label{somma_taylor_pm}
\gamma^{+,\ell}_k(x)\varphi\Big(t_{k}+\lambda^{+,\ell}_{k}(x)\Delta t, y^{+,\ell}_{k}(x)\Big) 
+\gamma^{-,\ell}_{k}(x)\varphi\Big(t_{k}+\lambda^{-,\ell}_{k}(x)\Delta t,y^{-,\ell}_{k}(x)\Big)\\ 
=\varphi(t_{k},x)+\tau^{\ell}_{k}(x)\bigg(\partial_{t}\varphi(t_{k},x) + D_{x} \varphi(t_{k},x)^{\top} b(t_{k},x)+\frac{p}{2}\sigma^{\ell}(t_{k},x)^{\top}
\big[D_{x}^{2}\varphi(t_{k},x)\sigma^{\ell}(t_{k},x)\big]\bigg)\\ 
+\gamma^{+,\ell}_{k}(x)\lambda^{+,\ell}_{k}(x)\Delta t\epsilon_{k}^{+,\ell}(\Dt,x)+\gamma^{-,\ell}_{k}(x)\lambda^{-,\ell}_{k}(x)\Delta t\epsilon_{k}^{-,\ell}(\Delta t,x),
\end{multline}
where $\epsilon_{k}^{\pm,\ell}\colon [0,+\infty)\times \Omega\to\R$ satisfies  
\begin{equation}
\label{errore_globale_apendice}
\lim_{\Delta t\to 0}\sup_{k\in\mathcal{I}_{\Delta t}^{*},\,x\in\Omega} \Big\{|\epsilon_{k}^{+,\ell}(\Delta t,x)|\vee |\epsilon_{k}^{-,\ell}(\Delta t,x)|\Big\}= 0. 
\end{equation}
\end{lem}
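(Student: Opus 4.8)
The plan is to establish \eqref{somma_taylor_pm} by a second-order Taylor expansion of $\varphi$ performed separately along each truncated characteristic $y^{\pm,\ell}_{k}(x)$, and then to combine the two expansions using the algebraic identities satisfied by the weights in \eqref{weights_multi_dimensional_case}. Throughout I fix $(k,x,\ell)$ and abbreviate $\lambda^{\pm}:=\lambda^{\pm,\ell}_{k}(x)$, $\theta^{\pm}:=\lambda^{\pm}\Dt$, $y^{\pm}:=y^{\pm,\ell}_{k}(x)$, $b:=b(t_{k},x)$, $\sigma:=\sigma^{\ell}(t_{k},x)$, and $\gamma^{\pm}:=\gamma^{\pm,\ell}_{k}(x)$. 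The starting observation is that $y^{\pm}-x=\theta^{\pm}b\pm\sqrt{p\theta^{\pm}}\,\sigma$, so the displacement has a part of size $\sqrt{\Dt}$ along $\sigma$ and a part of size $\Dt$ along $b$; this is what makes the $\sqrt{\Dt}$ order appear and forces the cancellation mechanism.

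First I would expand each term in space at the shifted time $t_{k}+\theta^{\pm}$ and then in time at $x$, writing
\[
\varphi(t_{k}+\theta^{\pm},y^{\pm})=\varphi(t_{k}+\theta^{\pm},x)+D_{x}\varphi(t_{k}+\theta^{\pm},x)^{\top}(y^{\pm}-x)+\tfrac{1}{2}(y^{\pm}-x)^{\top}D_{x}^{2}\varphi(t_{k}+\theta^{\pm},\xi^{\pm})(y^{\pm}-x),
\]
with $\xi^{\pm}$ on the segment between $x$ and $y^{\pm}$, together with $\varphi(t_{k}+\theta^{\pm},x)=\varphi(t_{k},x)+\theta^{\pm}\partial_{t}\varphi(t_{k},x)+\theta^{\pm}o(1)$. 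Grouping by order in $\sqrt{\Dt}$, the leading ($\sqrt{\Dt}$) contribution is $\pm\sqrt{p\theta^{\pm}}\,D_{x}\varphi(t_{k}+\theta^{\pm},x)^{\top}\sigma$, the first-order-in-$\Dt$ contribution is $\theta^{\pm}\big(\partial_{t}\varphi+D_{x}\varphi^{\top}b+\tfrac{p}{2}\sigma^{\top}D_{x}^{2}\varphi\,\sigma\big)(t_{k},x)$, and the rest is a remainder. The heart of the estimate is to replace the gradient and Hessian at the shifted arguments by their values at $(t_{k},x)$ and to show that the entire remainder has the form $\theta^{\pm}\epsilon^{\pm,\ell}_{k}(\Dt,x)$ with $\sup_{k,x}|\epsilon^{\pm,\ell}_{k}|\to0$.

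The remainder is then controlled term by term. The genuinely quadratic spatial part equals $\tfrac{p\theta^{\pm}}{2}\sigma^{\top}D_{x}^{2}\varphi(t_{k},x)\sigma$ up to the mixed and pure-drift pieces $\pm\sqrt{p}\,(\theta^{\pm})^{3/2}b^{\top}D_{x}^{2}\varphi\,\sigma+\tfrac{(\theta^{\pm})^{2}}{2}b^{\top}D_{x}^{2}\varphi\,b$, which are $O((\theta^{\pm})^{3/2})=\theta^{\pm}O(\sqrt{\Dt})$, and up to the error from replacing $D_{x}^{2}\varphi(t_{k}+\theta^{\pm},\xi^{\pm})$ by $D_{x}^{2}\varphi(t_{k},x)$, which is bounded by $|y^{\pm}-x|^{2}$ times the modulus of continuity of $D_{x}^{2}\varphi$ on the compact set $\overline Q_{T}$, hence again $\theta^{\pm}o(1)$; the time term uses only the uniform continuity of $\partial_{t}\varphi$. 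The delicate contribution is the $\sqrt{\Dt}$-term: writing $D_{x}\varphi(t_{k}+\theta^{\pm},x)=D_{x}\varphi(t_{k},x)+[D_{x}\varphi(t_{k}+\theta^{\pm},x)-D_{x}\varphi(t_{k},x)]$, the first piece will cancel in the weighted sum, while the second is estimated, \emph{using precisely the hypothesis $\varphi\in\mathcal F_{\beta}$ with $\beta>\tfrac12$}, by $\sqrt{p\theta^{\pm}}\,|\sigma|\cdot C(\theta^{\pm})^{\beta}=C'(\theta^{\pm})^{\beta+1/2}=\theta^{\pm}C'(\theta^{\pm})^{\beta-1/2}\le\theta^{\pm}C'(\Dt)^{\beta-1/2}$, which tends to $0$ uniformly exactly because $\beta-\tfrac12>0$ (the analogous time-shift of the gradient along $b$ is of even smaller order $\theta^{\pm}(\theta^{\pm})^{\beta}$). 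Boundedness and uniform continuity of $b$, $\sigma$, $\partial_{t}\varphi$, $D_{x}^{2}\varphi$ on $\overline Q_{T}$, together with the uniform Hölder constant defining $\mathcal F_{\beta}$, make all these bounds uniform in $(k,x)$.

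Finally I would multiply the two expansions by $\gamma^{+}$ and $\gamma^{-}$ and add, closing the argument with three elementary identities for the weights \eqref{weights_multi_dimensional_case}: $\gamma^{+}+\gamma^{-}=1$, which reconstructs $\varphi(t_{k},x)$; $\gamma^{+}\sqrt{\lambda^{+}}-\gamma^{-}\sqrt{\lambda^{-}}=0$, which annihilates the leading $\sqrt{\Dt}$ gradient term and leaves only the absorbed $\theta^{\pm}$-residue; and $\gamma^{+}\lambda^{+}+\gamma^{-}\lambda^{-}=\sqrt{\lambda^{+}\lambda^{-}}$, so that the first-order-in-$\Dt$ contributions combine into $\Dt\sqrt{\lambda^{+}\lambda^{-}}\,(\partial_{t}\varphi+D_{x}\varphi^{\top}b+\tfrac{p}{2}\sigma^{\top}D_{x}^{2}\varphi\,\sigma)=\tau^{\ell}_{k}(x)(\cdots)$, exactly the claimed main term; the remainders aggregate into $\gamma^{+}\lambda^{+}\Dt\,\epsilon^{+,\ell}_{k}+\gamma^{-}\lambda^{-}\Dt\,\epsilon^{-,\ell}_{k}$, yielding \eqref{somma_taylor_pm} together with \eqref{errore_globale_apendice}. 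I expect the main obstacle to be the uniform control of the $\sqrt{\Dt}$-term: it is the only contribution a priori larger than the target scale $\theta^{\pm}$, its cancellation hinges on the exact form of the weights, and its non-cancelling residue is tamed only by the $\beta>\tfrac12$ Hölder-in-time regularity of $D_{x}\varphi$. A secondary technical point, relevant when $\lambda^{\pm}<1$ so that $y^{\pm}\in\partial\Omega$, is to verify that the Taylor segment between $x$ and $y^{\pm}$ remains within an $O(\Dt)$-neighborhood of $\overline\Omega$ on which $\varphi$ is $C^{1,2}$, so that the expansion is legitimate.
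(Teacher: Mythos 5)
Your proposal is correct and follows essentially the same route as the paper's Appendix proof: a second-order Taylor expansion along each truncated characteristic, cancellation of the order-$\sqrt{\Dt}$ gradient term via the identity $\gamma^{+}\sqrt{\lambda^{+}}=\gamma^{-}\sqrt{\lambda^{-}}$, reconstruction of the main term through $\gamma^{+}+\gamma^{-}=1$ and $\gamma^{+}\lambda^{+}+\gamma^{-}\lambda^{-}=\sqrt{\lambda^{+}\lambda^{-}}=\tau^{\ell}_{k}(x)/\Dt$, and control of the residual time-shift of the gradient by the $\beta$-H\"older condition with $\beta>\tfrac12$, exactly as in the paper's treatment of its remainder $\epsilon_{k}^{2,\pm,\ell}$. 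The only cosmetic difference is that you use a Lagrange-form second-order remainder while the paper uses integral remainders, and you correctly isolate the same single delicate term.
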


Given  $k\in\I_{\Delta t}^{*}$ and $x\in\Omega$, define  
\begin{equation}
\label{eq:tau}
\pi^{\ell}_{k}(x)=
\begin{cases} 
1 & \text{if }p=1,\\
\frac{\prod_{\ell_{1} \neq \ell}\tau^{\ell_{1}}_{k}(x)}{\sum_{\ell_{2}=1}^{p}
\prod_{\ell_{3}\neq\ell_{2}}\tau^{\ell_{3}}_{k}(x)} &\text{otherwise},
\end{cases}
\quad\text{for all }\ell\in\mathcal{I},  
\end{equation}
and set
\begin{equation}
\label{total_tau}
\tau_{k}(x)= 
\begin{cases}
\tau_{k}^{1}(x) &\text{if }p=1,\\
p\frac{\prod_{\ell=1}^{p}\tau^{\ell}_{k}(x)}{\sum_{\ell_{2}=1}^{p}
\prod_{\ell_{3}\neq\ell_{2}}\tau^{\ell_{3}}_{k}(x)} &\text{otherwise}.
\end{cases}
\end{equation}
Observe that the following identity holds:
\begin{equation}
\label{tau_ell_pi_ell}
\sum_{\ell\in\mathcal{I}}\pi^{\ell}_{k}(x)=1\quad\text{and}\quad\pi^{\ell}_{k}(x)\tau^{\ell}_{k}(x)=\frac{\tau_{k}(x)}{p}\quad\text{for all }\ell\in\mathcal{I}.
\end{equation}
For all $\phi\in\mathcal{B}(\overline{\Omega})$, $k\in\I_{\Delta t}^{*}$, and
$x\in\Omega$, define 
\begin{equation}
\label{definition_hat_linear_multidimensional_case}
\tilde{\phi}^{\pm,\ell}_{k}(x)= 
\begin{cases}
\phi (y^{\pm,\ell}_{k}(x))&\text{if }\lambda^{\pm,\ell}_{k}(x)=1,\\
\Psi\Big(t_{k}+\lambda^{\pm,\ell}_{k}(x)\Delta t,y^{\pm,\ell}_{k}(x)\Big)& \text{otherwise},
\end{cases}
\quad\text{for all }\ell\in\mathcal{I},  
\end{equation}
and set
\begin{equation}
\label{definition_semi_discrete_scheme_S_bounded_case}
\mathcal{S}_{k}^{\text{sd}}(\phi,x)=\sum_{\ell=1}^{p}\pi^{\ell}_{k}(x) 
\Big(\gamma^{+,\ell}_{k}(x)\tilde{\phi}^{+,\ell}_{k}(x)+\gamma^{-,\ell}_{k}(x)
\tilde{\phi}^{-,\ell}_{k}(x)\Big)+\tau_{k}(x)f(t_{k},x).
\end{equation}

We consider the following semi-discrete scheme:
\begin{align}
\label{linear_scheme_bounded_multidimensional_case}
V_{k}(x)&=\mathcal{S}_{k}^{\text{sd}}(V_{k+1},x)\quad\text{for all }
k\in\mathcal{I}^{*}_{\Delta t},\,x\in\Omega,\nonumber\\
V_{k}(x)&= \Psi(t_{k},x)\quad\text{for all }k\in\mathcal{I}^{*}_{\Delta t},
\,x\in\partial\Omega,\nonumber\\
V_{N}(x)&=\Psi(T,x)\quad\text{for all }x\in\overline{\Omega}.
\end{align}
\begin{rem}
\label{rem:consistency_with_unbounded_case}
\begin{enumerate}[{\rm(i)}] 
\item 
\label{rem:consistency_with_unbounded_case_i}
Let $k\in\mathcal{I}^*_{\Dt}$, $x\in\Omega$, and suppose that, for every $\ell\in\mathcal{I}$, the discrete characteristics $y^{\pm,\ell}_{k}(x)$ do not exit the domain, i.e. $\lambda^{\pm,\ell}_{k}(x)=1$. Then~\eqref{definition_semi_discrete_scheme_S_bounded_case},~\eqref{eq:tau},~\eqref{weights_multi_dimensional_case}, and~\eqref{definition_hat_linear_multidimensional_case} yield 
\begin{equation*}
\mathcal{S}_{k}^{\rm{sd}}(\phi,x)=\frac{1}{2p}\sum_{\ell=1}^{p}
\Big(\phi(y^{+,\ell}_k(x))+\phi(y^{-,\ell}_{k}(x))\Big)+\Delta t f(t_{k},x),
\end{equation*}
which coincides with \eqref{forward_operator_unbounded_case}.

\item
\label{rem:consistency_with_unbounded_case_ii}
Scheme~\eqref{linear_scheme_bounded_multidimensional_case} differs from the one proposed in~\cite[Section 6]{MR1867423} in that the discrete 
characteristics~\eqref{definizione_y_k_pm}, starting at $x$ at time $t_k$, are constructed by considering a specific column $\sigma^\ell(t_k,x)$ and not the entire matrix
$\sigma(t_{k},x)$. In particular, in view of~\eqref{rem:consistency_with_unbounded_case_i}, definition~\eqref{definition_semi_discrete_scheme_S_bounded_case} is the natural modification of~\eqref{forward_operator_unbounded_case} to deal with Dirichlet boundary conditions. In this framework, the proof of Proposition~\ref{consistency_semi_discrete_scheme_linear_case} below shows that the expressions for the weights $\pi_{k}^{\ell}$ and the rescaled time $\tau_{k}(x)$ in~\eqref{eq:tau} are the only possible choices which provide a consistent scheme.
\end{enumerate}
\end{rem}

\begin{prop}
\label{consistency_semi_discrete_scheme_linear_case}
Let $\beta\in (\frac{1}{2},1]$ and let $\varphi\in\mathcal{F}_{\beta}(\overline{Q}_T)$ be such that $\varphi=\Psi$ on $[0,T]\times\partial\Omega$. Then the following consistency result holds: as $\Delta t\to 0$, we have
\begin{equation}
\label{consistency_linear_bounded_semidiscrete_case}
\sup_{k\in\mathcal{I}^{*}_{\Delta t},\,x\in\Omega}\Bigg| \frac{\varphi(t_{k},x)-\mathcal{S}_{k}^{\rm{sd}}(\varphi(t_{k+1},\cdot),x)}{\tau_{k}(x)} -\Big(-\partial_{t}\varphi(t_{k},x)+\mathcal{L}(t_{k},x,D_{x}\varphi(t_{k},x),D_{x}^2 \varphi(t_{k},x))\Big)\Bigg|\to 0.
\end{equation}  
\end{prop}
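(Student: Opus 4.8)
The plan is to reduce the claim to a termwise application of Lemma~\ref{Taylor_dim_1}, followed by the averaging identities~\eqref{tau_ell_pi_ell} and a uniform control of the resulting remainder after division by $\tau_k(x)$.

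\textbf{Step 1 (removing the boundary branch).} The first thing I would observe is that, when $\phi=\varphi(t_{k+1},\cdot)$ is inserted into the scheme~\eqref{definition_semi_discrete_scheme_S_bounded_case}, the two cases of~\eqref{definition_hat_linear_multidimensional_case} collapse into a single evaluation of the smooth function $\varphi$. Indeed, if $\lambda^{\pm,\ell}_k(x)=1$, then $t_k+\lambda^{\pm,\ell}_k(x)\Delta t=t_{k+1}$, so $\tilde\varphi^{\pm,\ell}_k(x)=\varphi(t_{k+1},y^{\pm,\ell}_k(x))=\varphi\big(t_k+\lambda^{\pm,\ell}_k(x)\Delta t,y^{\pm,\ell}_k(x)\big)$. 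If instead $\lambda^{\pm,\ell}_k(x)<1$, then, by definition~\eqref{lambda}, the openness of $\Omega$, and the continuity of $\lambda\mapsto x+\lambda\Delta t\,b(t_k,x)\pm\sqrt{p\lambda\Delta t}\,\sigma^{\ell}(t_k,x)$, the point $y^{\pm,\ell}_k(x)$ must lie on $\partial\Omega$; since also $t_k+\lambda^{\pm,\ell}_k(x)\Delta t\in[0,T]$, the corresponding spacetime point belongs to $[0,T]\times\partial\Omega$, where $\varphi=\Psi$ by hypothesis, and therefore $\tilde\varphi^{\pm,\ell}_k(x)=\Psi\big(t_k+\lambda^{\pm,\ell}_k(x)\Delta t,y^{\pm,\ell}_k(x)\big)=\varphi\big(t_k+\lambda^{\pm,\ell}_k(x)\Delta t,y^{\pm,\ell}_k(x)\big)$. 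Hence in every case the bracketed term in~\eqref{definition_semi_discrete_scheme_S_bounded_case} equals, for each $\ell$, the left-hand side of~\eqref{somma_taylor_pm}, so Lemma~\ref{Taylor_dim_1} applies verbatim.

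\textbf{Step 2 (summing the expansions).} Next I would apply Lemma~\ref{Taylor_dim_1} for each $\ell$, multiply by $\pi^\ell_k(x)$, and sum over $\ell$. Using $\sum_{\ell}\pi^\ell_k(x)=1$ to collect the $\varphi(t_k,x)$ terms and $\pi^\ell_k(x)\tau^\ell_k(x)=\tau_k(x)/p$ from~\eqref{tau_ell_pi_ell} to assemble the first-order part, the leading contribution becomes
\[
\tau_k(x)\Big(\partial_t\varphi(t_k,x)+D_x\varphi(t_k,x)^\top b(t_k,x)+\tfrac12\sum_{\ell=1}^{p}\sigma^{\ell}(t_k,x)^\top\big[D_x^2\varphi(t_k,x)\sigma^{\ell}(t_k,x)\big]\Big).
\]
The matrix identity $\sum_{\ell=1}^{p}(\sigma^{\ell})^\top P\,\sigma^{\ell}=\text{Tr}(\sigma\sigma^\top P)$ then turns this into $\tau_k(x)\big(\partial_t\varphi-\mathcal{L}(t_k,x,D_x\varphi,D_x^2\varphi)-f(t_k,x)\big)$; adding the source $\tau_k(x)f(t_k,x)$ from~\eqref{definition_semi_discrete_scheme_S_bounded_case} cancels the $f$-terms and yields
\[
\mathcal{S}^{\rm sd}_k(\varphi(t_{k+1},\cdot),x)=\varphi(t_k,x)+\tau_k(x)\big(\partial_t\varphi(t_k,x)-\mathcal{L}(t_k,x,D_x\varphi(t_k,x),D_x^2\varphi(t_k,x))\big)+\mathcal{R}_k(x),
\]
where $\mathcal{R}_k(x)=\sum_{\ell}\pi^\ell_k(x)\big(\gamma^{+,\ell}_k(x)\lambda^{+,\ell}_k(x)\Delta t\,\epsilon^{+,\ell}_k(\Delta t,x)+\gamma^{-,\ell}_k(x)\lambda^{-,\ell}_k(x)\Delta t\,\epsilon^{-,\ell}_k(\Delta t,x)\big)$. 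Rearranging, the quotient in~\eqref{consistency_linear_bounded_semidiscrete_case} equals $-\partial_t\varphi(t_k,x)+\mathcal{L}(t_k,x,D_x\varphi,D_x^2\varphi)-\mathcal{R}_k(x)/\tau_k(x)$.

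\textbf{Step 3 (the remainder).} It then remains to prove $\sup_{k,x}|\mathcal{R}_k(x)|/\tau_k(x)\to0$, which I regard as the only genuinely quantitative point, and the one that explains the definitions~\eqref{eq:tau}. Although $\tau_k(x)$ may be arbitrarily small near $\partial\Omega$, the bound $\gamma^{\pm,\ell}_k(x)\lambda^{\pm,\ell}_k(x)\Delta t\le\tau^\ell_k(x)$ from~\eqref{weights_multi_dimensional_case} gives $|\mathcal{R}_k(x)|\le\sum_{\ell}\pi^\ell_k(x)\tau^\ell_k(x)\max_{\pm}|\epsilon^{\pm,\ell}_k(\Delta t,x)|$, and then the exact cancellation $\pi^\ell_k(x)\tau^\ell_k(x)/\tau_k(x)=1/p$ from~\eqref{tau_ell_pi_ell} yields $|\mathcal{R}_k(x)|/\tau_k(x)\le\frac1p\sum_{\ell}\max_{\pm}|\epsilon^{\pm,\ell}_k(\Delta t,x)|$, which tends to $0$ uniformly in $(k,x)$ by~\eqref{errore_globale_apendice}. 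I expect Step~1 to be the conceptual crux, since it is where the truncation construction together with the assumption $\varphi=\Psi$ on $[0,T]\times\partial\Omega$ lets the boundary-data branch be read as an evaluation of $\varphi$; the delicate estimate is the uniform control of $\mathcal{R}_k(x)/\tau_k(x)$, which, as anticipated in Remark~\ref{rem:consistency_with_unbounded_case}, is precisely what forces the choice of $\pi^\ell_k$ and $\tau_k$.
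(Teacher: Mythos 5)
Your proposal is correct and follows essentially the same route as the paper's proof: apply Lemma~\ref{Taylor_dim_1} for each $\ell$, combine via the identities~\eqref{tau_ell_pi_ell} and the trace identity, and control the remainder after division by $\tau_k(x)$ using~\eqref{errore_globale_apendice}. Your Step~1 (showing that both branches of~\eqref{definition_hat_linear_multidimensional_case} reduce to an evaluation of $\varphi$ because $y^{\pm,\ell}_k(x)\in\partial\Omega$ when $\lambda^{\pm,\ell}_k(x)<1$) and the explicit remainder bound in Step~3 are details the paper leaves implicit, but they are exactly the intended argument.
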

\begin{proof} 
Let $k\in\mathcal{I}^*_{\Delta t}$ and $x\in\Omega$.
Using~\eqref{definition_semi_discrete_scheme_S_bounded_case},~\eqref{tau_ell_pi_ell},
~\eqref{somma_taylor_pm} in Lemma~\ref{Taylor_dim_1}, and the identity  
\begin{equation}
\sum^{p}_{\ell=1}\sigma^{\ell}(t_{k},x)^{\top} [D_{x}^2\varphi(t_{k},x)\sigma^{\ell}(t_{k},x)]=\text{Tr}[\sigma\sigma^{\top}(t,x)D_{x}^{2}\varphi(t_{k},x_{i})],
\end{equation}
we have that
\begin{multline}
\Big|\varphi(t_{k},x)-\mathcal{S}_{k}^{\text{sd}}(\varphi(t_{k+1},\cdot),x)
+\tau_{k}(x)\big(\partial_{t}\varphi(t_{k},x)-\mathcal{L}(t_{k},x,D_{x}\varphi(t_{k},x), D_{x}^2\varphi(t_{k},x))\big)\Big|\\
\leq\tau_{k}(x)\max_{\ell\in\mathcal{I}}\sup_{k\in\mathcal{I}^{*}_{\Dt},\,x\in\Omega}
\Big\{\big|\epsilon_{k}^{+,\ell}(\Delta t,x)\big|\vee\big|\epsilon_{k}^{-,\ell}(\Delta t,x)\big|\Big\}.
\end{multline}
Thus, dividing the previous inequality by $\tau_{k}(x)$, relation  ~\eqref{consistency_linear_bounded_semidiscrete_case} follows from~\eqref{errore_globale_apendice}.
\end{proof}
 
\subsection{A fully discrete linear SL scheme  in $Q_T$}
\label{fully_discrete_linear_case_section} 
We describe now a fully discrete SL scheme to solve
\begin{subequations}
\begin{align}
-\partial_{t}v(t,x)+\mathcal{L}(t,x,D_{x}v(t,x),D^{2}_{x}v(t,x))&=0
\quad\text{in }Q_T,
\label{parabolic_HJB_bounded}\\
v(t,x)&=\Psi(x)\quad\text{in }\partial^{*}Q_{T},
\label{boundary_conditions_bounded}
\end{align}
\end{subequations}
where $\mathcal{L}$ is the linear operator defined in \eqref{linear_operator}.

Consider a triangulation $\widehat{\mathsf{T}}_{\Delta x}$ of $\Omega$ with maximum mesh size $\Delta x$ which is exact, i.e. the boundary elements of $\widehat {\mathsf{T}}_{\Delta x}$ are possibly curved and $\overline{\Omega }= \bigcup_{ \widehat{\mathsf{T}} \in \widehat{\mathsf{T}}_{\Delta x}} \widehat{\mathsf{T}}$ (see, e.g.,~\cite{B89}).
We denote by $\mathcal{G}_{\Delta x} = \left\{x_{i}\,|\,i\in\I_{\Delta x}\right\}$, with $\I_{\Delta x}=\{1,\hdots, N_{\Delta x}\}$, the set of vertices of the triangulation. Following~\cite{Deckelnick09,CCDS21}, we also consider an associated finite subdivision $ {\mathsf{T}}_{\Delta x}$ consisting of simplicial finite elements ${\mathsf{T}}$, with vertices in $\mathcal{G}_{\Delta x}$, and denote by $\Omega_{\Delta x}=\bigcup_{\mathsf{T}\in \mathsf{T}_{\Delta x}}\mathsf{T}$ the resulting closed polyhedral domain.  We suppose that every $\mathsf{T}\in\mathsf{T}_{\Delta x}$ shares the same set of vertices than an element $\widehat{\mathsf{T}}\in\widehat{\mathsf{T}}_{\Delta x}$ and that 
$\mathsf{T}_{\Delta x}$ is a regular triangulation, i.e. there exists $\delta\in(0,1)$, independent of $\Delta x$, such that  each $\mathsf{T}\in\mathsf{T}_{\Delta x}$ is contained in a 
ball of radius $\Delta x/\delta$ and $\mathsf{T}$ contains a ball of radius $\delta\Delta x$. For every  $\mathsf{T}\in \mathsf{T}_{\Delta x}$ and $x\in\overline{\Omega}$, denote by $p_{\mathsf{T}}(x)$ the projection of $x$ onto $\mathsf{T}$ and define 
$p_{\Delta x}\colon\overline{\Omega}\rightarrow
\Omega_{\Delta x}$ by
\begin{equation}
p_{\Delta x}(x)=p_{\mathsf{T}}(x) \quad 
\mbox{if $x\in \widehat{\mathsf{T}}\in \widehat {\mathsf{T}}_{\Delta x}$  and the element $\mathsf{T}\in {\mathsf{T}}_{\Delta x}$ has the same vertices than  $\widehat{ \mathsf{T}} $}.
\end{equation}
In what follows, we assume the existence of  $c>0$, independent of $\Delta x$, such that, for $\Delta x$ small enough, 
\begin{equation}
\sup_{x\in\overline{\Omega}}|x-p_{\Delta x}(x)|\leq c(\Delta x)^{2}.
\label{hyp:distance_triangles}
\end{equation}
Constructions of curved and polyhedral triangulations of two and three dimensional domains, with $C^2$ boundaries, such that~\eqref{hyp:distance_triangles} holds can be found, for instance, in~\cite[Section~4]{DziukElliott2013}.

Let $\{\psi_{i}\,| \,i\in \mathcal{I}_{\Delta x}\}$  be the linear finite element basis $\mathbb{P}_1$ on $\mathsf{T}_{\Delta x}$ and, for every
$\phi\colon\mathcal{G}_{\Delta x}\to\R$, denote by $I[\phi]$ its linear interpolation on $\widehat{\mathsf{T}}_{\Delta x}$, defined by
\begin{equation}
\label{interpolation} 
I[\phi](x):=\sum_{i=1}^{N_{\Delta x}}\psi_{i}({p_{\Dx}}(x))\phi_{i}\quad\text{for all } x\in\overline{\Omega},
\end{equation}
where, for notational simplicity, hereafter we set $\phi_{i}=\phi(x_{i})$. Using standard interpolation results and the regularity of the mesh, for every $\varphi\in C^2(\R^d)$, there exists $c_{\varphi}>0$, independent of $\Delta x$, such that 
\begin{equation}
\label{interpolation_ineq} 
\sup_{x\in\Omega_{\Delta x}}\; \left|\varphi(x)-\sum_{i=1}^{N_{\Delta x}}\psi_{i}(x)\varphi(x_{i})\right|\leq c_{\varphi} (\Delta x)^2\quad\text{for all }x\in\overline{\Omega},
\end{equation}
see e.g.,~\cite[Theorem 16.1]{Ciarlet}.  Thus, setting $C_{\varphi}=c+c_{\varphi}$, it follows from~\eqref{hyp:distance_triangles},~\eqref{interpolation_ineq}, and the triangular inequality that
\be
\label{interp_estim}
\sup_{x\in\overline{\Omega}}\; \big|\varphi(x) - I \left[ \varphi|_{\mathcal{G}_{\Delta x}}\right] (x)\big|\leq C_{\varphi} (\Delta x)^2.
\ee 
Notice that~\eqref{interp_estim} also holds for every $\varphi\in C^{2}(\overline{\Omega})$ as one can extend $\varphi$ to an element in $C^{2}(\R^{d})$ (see, e.g.,~\cite[Lemma 2.20]{Lee2012}). Let $\partial\mathcal{I}_{\Delta x}=\{i\in\mathcal I_{\Delta x}\,|\,
x_{i}\in\partial\Omega\}$, ${\mathcal{I}}^{\circ}_{\Delta x}=
\mathcal{I}_{\Delta x}\setminus\partial\mathcal{I}_{\Delta x}$ and, for every $k\in\I_{\Delta}^{*}$, $i\in {\mathcal{I}}^{\circ}_{\Delta x}$, and $\ell\in\I$, define $ y^{\pm,\ell}_{k,i}=y^{\pm,\ell}_k(x_{i})$ by~\eqref{eq:yell}. Recalling~\eqref{lambda},~\eqref{weights_multi_dimensional_case},
~\eqref{eq:tau},~\eqref{total_tau}, let us set
\begin{equation}
\label{def:notation_under}
\lambda^{\pm,\ell}_{k,i}=\lambda^{\pm,\ell}_k(x_{i}),\quad \gamma^{\pm,\ell}_{k,i}=\gamma^{\pm,\ell}_{k}(x_{i}),\quad \tau^{\ell}_{k,i}=\tau_{k}^{\ell}(x_{i}),\quad
\pi^\ell_{k,i}=\pi^{\ell}_{k}(x_{i}),\quad\tau_{k,i}=\tau_{k}(x_{i}).
\end{equation}

We consider the fully discrete SL approximation $\{V_{k}\colon\mathcal{G}_{\Delta x}\to\R\,|\,k\in\mathcal{I}_{\Delta t}\}$, defined by the following backward recursion:
\begin{align}
\label{linear_fully}
V_{k,i}&=\mathcal{S}_{k,i}^{\text{fd}}(V_{k+1})\quad\text{for all }
k\in\mathcal{I}^{*}_{\Delta t},\,i\in {\mathcal{I}}^{\circ}_{\Delta x},\nonumber\\ 
V_{k,i}&=\Psi(t_{k},x_{i})\quad\text{for all }k\in\mathcal{I}^{*}_{\Delta t},\,
i\in\partial\mathcal{I}_{\Delta x},\nonumber\\
V_{N,i}&=\Psi(T,x_{i})\quad\text{for all }i\in\mathcal{I}_{\Delta x},
\end{align}
where, for every $\phi\colon\mathcal{G}_{\Delta x}\to\R$, $k\in\mathcal{I}^{*}_{\Delta t}$, and $i\in {\mathcal{I}}^{\circ}_{\Delta x}$,   
\begin{equation}
\label{definition_fully_discrete_scheme_S_bounded_case}
\mathcal{S}_{k,i}^{\text{fd}}(\phi)=\sum_{\ell=1}^{p}\pi^{\ell}_{k,i}
\Big(\gamma^{+,\ell}_{k,i}\tilde{\phi}^{+,\ell}_{k,i}+\gamma^{-,\ell}_{k,i}
\tilde{\phi}^{-,\ell}_{k,i}\Big)+\tau_{k,i}f(t_{k},x_i), 
\end{equation}
with $\tilde{\phi}^{\pm,\ell}_{k,i}$ $(\ell\in\I)$ being given by
\begin{equation}
\label{definition_hat_linear_multidimensional_case_fully_discrete}
 \tilde{\phi}^{\pm,\ell}_{k,i} := \begin{cases}
   I[\phi]\big( y^{\pm,\ell}_{k,i} \big)   &  \text{if } \lambda^{\pm,\ell}_{k,i}=1,\\
 \Psi\left(t_k+\lambda^{\pm,\ell}_{k,i}\Dt, y^{\pm,\ell}_{k,i}\right)& \text{otherwise}.
 \end{cases}
\end{equation}

For every $\phi\colon\overline{Q}_{T}\to\R$ and $k\in\mathcal{I}_{\Delta t}$, let us set $\phi_{k}=\phi(t_k,\cdot)|_{\mathcal{G}_{\Delta x}}$. We have the following result concerning the consistency of the scheme \eqref{linear_fully}.

\begin{prop}
\label{consistency_fully_linear_case}
Let $\beta\in (\frac{1}{2},1]$ and let $\varphi\in\mathcal{F}_{\beta}(\overline{Q}_T)$ be such that $\varphi=\Psi$ on $\partial^{*}Q_{T}$. Then the following consistency property holds: as $\Delta t\to 0$ and $(\Delta x)^2/\Delta t\to 0$, we have
\begin{equation}
\sup_{k\in\mathcal{I}^{*}_{\Delta t},\,i\in {\mathcal{I}}^{\circ}_{\Delta x}}\Bigg| \frac{\varphi(t_{k},x_{i})-\mathcal{S}_{k,i}^{\text{{\rm fd}}}(\varphi_{k+1})}{\tau_{k,i}} -\Big(-\partial_{t}\varphi(t_{k},x_{i})+\mathcal{L}(t_{k},x_{i},D_{x}\varphi(t_{k},x_{i}),D_{x}^2\varphi(t_{k},x_{i}))\Big)\Bigg|\to 0.
\end{equation}  
\end{prop}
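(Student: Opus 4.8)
The plan is to exploit the fact that the fully discrete operator $\mathcal{S}_{k,i}^{\mathrm{fd}}$ in~\eqref{definition_fully_discrete_scheme_S_bounded_case} is obtained from the semi-discrete operator $\mathcal{S}_{k}^{\mathrm{sd}}(\cdot,x_i)$ in~\eqref{definition_semi_discrete_scheme_S_bounded_case} by replacing, in the interior branch of~\eqref{definition_hat_linear_multidimensional_case_fully_discrete}, the exact evaluation $\varphi(t_{k+1},y^{\pm,\ell}_{k,i})$ by its linear interpolant $I[\varphi_{k+1}](y^{\pm,\ell}_{k,i})$. Inserting $\mathcal{S}_{k}^{\mathrm{sd}}(\varphi(t_{k+1},\cdot),x_i)$ and using the triangle inequality, the quantity to be estimated is bounded by the semi-discrete consistency error divided by $\tau_{k,i}$, which tends to $0$ uniformly by Proposition~\ref{consistency_semi_discrete_scheme_linear_case}, plus the purely interpolation-induced term $|\mathcal{S}_{k}^{\mathrm{sd}}(\varphi(t_{k+1},\cdot),x_i) - \mathcal{S}^{\mathrm{fd}}_{k,i}(\varphi_{k+1})|/\tau_{k,i}$. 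Everything thus reduces to showing that this last term vanishes uniformly in $(k,i)$.

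Next I would compare the two operators branch by branch. By~\eqref{definition_hat_linear_multidimensional_case} and~\eqref{definition_hat_linear_multidimensional_case_fully_discrete}, whenever $\lambda^{\pm,\ell}_{k,i}<1$ both schemes use the same boundary value $\Psi(t_k+\lambda^{\pm,\ell}_{k,i}\Delta t,\,y^{\pm,\ell}_{k,i})$ and contribute nothing to the difference; the discrepancy appears only on the branches with $\lambda^{\pm,\ell}_{k,i}=1$, where it equals $\varphi(t_{k+1},y^{\pm,\ell}_{k,i}) - I[\varphi_{k+1}](y^{\pm,\ell}_{k,i})$. Since $\varphi\in\mathcal F_\beta(\overline Q_T)\subset C^{1,2}(\overline Q_T)$ and $\overline Q_T$ is compact, the norms $\sup_{t\in[0,T]}\|\varphi(t,\cdot)\|_{C^2(\overline\Omega)}$ are finite, so the interpolation estimate~\eqref{interp_estim} applies with a constant $C_\varphi$ independent of $k$, giving $|\varphi(t_{k+1},y^{\pm,\ell}_{k,i}) - I[\varphi_{k+1}](y^{\pm,\ell}_{k,i})|\leq C_\varphi(\Delta x)^2$ on each such branch. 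Hence
$$\frac{\bigl|\mathcal{S}_{k}^{\mathrm{sd}}(\varphi(t_{k+1},\cdot),x_i) - \mathcal{S}^{\mathrm{fd}}_{k,i}(\varphi_{k+1})\bigr|}{\tau_{k,i}} \leq C_\varphi(\Delta x)^2\sum_{\ell=1}^{p}\frac{\pi^{\ell}_{k,i}}{\tau_{k,i}}\Bigl(\gamma^{+,\ell}_{k,i}\,\mathbf 1_{\{\lambda^{+,\ell}_{k,i}=1\}} + \gamma^{-,\ell}_{k,i}\,\mathbf 1_{\{\lambda^{-,\ell}_{k,i}=1\}}\Bigr).$$

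The heart of the argument, and what I expect to be the main obstacle, is that $\tau_{k,i}$ may be much smaller than $\Delta t$ when the discrete characteristics are truncated (i.e. when some $\lambda^{\pm,\ell}_{k,i}$ is small), so a crude lower bound on $\tau_{k,i}$ would fail. The resolution is that the interpolation error survives only on the branches with $\lambda^{\pm,\ell}_{k,i}=1$, which are precisely the ones for which the relevant weight ratio stays bounded. Concretely, the identity $\pi^{\ell}_{k,i}\tau^{\ell}_{k,i}=\tau_{k,i}/p$ from~\eqref{tau_ell_pi_ell} gives $\pi^{\ell}_{k,i}/\tau_{k,i}=1/(p\,\tau^{\ell}_{k,i})$, and combining it with $\tau^{\ell}_{k,i}=\Delta t\sqrt{\lambda^{+,\ell}_{k,i}\lambda^{-,\ell}_{k,i}}$ and the expression for $\gamma^{\pm,\ell}_{k,i}$ in~\eqref{weights_multi_dimensional_case} yields
$$\frac{\pi^{\ell}_{k,i}\gamma^{+,\ell}_{k,i}}{\tau_{k,i}} = \frac{1}{p\,\Delta t\,\bigl(\sqrt{\lambda^{+,\ell}_{k,i}}+\sqrt{\lambda^{-,\ell}_{k,i}}\bigr)\sqrt{\lambda^{+,\ell}_{k,i}}},$$
and symmetrically for the $\gamma^{-,\ell}_{k,i}$ term with $\sqrt{\lambda^{+,\ell}_{k,i}}$ replaced by $\sqrt{\lambda^{-,\ell}_{k,i}}$ in the last factor. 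On the event $\{\lambda^{+,\ell}_{k,i}=1\}$ (resp. $\{\lambda^{-,\ell}_{k,i}=1\}$) the right-hand side is therefore at most $1/(p\,\Delta t)$. Summing the $2p$ contributions, the displayed bound is controlled by $2C_\varphi(\Delta x)^2/\Delta t$, which tends to $0$ under the regime $(\Delta x)^2/\Delta t\to 0$. Together with the uniform vanishing of the semi-discrete consistency error, this establishes the claim.
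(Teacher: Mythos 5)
Your proof is correct and follows essentially the same route as the paper's: both isolate the interpolation error, which survives only on the branches with $\lambda^{\pm,\ell}_{k,i}=1$, and control it by $C_\varphi(\Delta x)^2/\Delta t$ using the identities $\pi^{\ell}_{k,i}\tau^{\ell}_{k,i}=\tau_{k,i}/p$ and $\tau^{\ell}_{k,i}=\Delta t\sqrt{\lambda^{+,\ell}_{k,i}\lambda^{-,\ell}_{k,i}}$ together with the form of $\gamma^{\pm,\ell}_{k,i}$. Your single closed-form bound for $\pi^{\ell}_{k,i}\gamma^{\pm,\ell}_{k,i}/\tau_{k,i}$ on the untruncated branches cleanly subsumes the paper's case analysis (a)--(d).
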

\begin{proof}  Let us fix $k\in \I^{*}_{\Delta t}$ and $i\in  {\I}^{\circ}_{\Delta x}$. It follows from \eqref{definition_hat_linear_multidimensional_case_fully_discrete} that
$$
\begin{array}{l}
\gamma^{+,\ell}_{k,i}\tilde{\varphi}^{+,\ell}_{k,i} +  \gamma^{-,\ell}_{k,i}\tilde \varphi^{-,\ell}_{k,i}
\\[10pt]
=  \gamma^{+,\ell}_{k,i} \varphi \left(t_k+\lambda^{+,\ell}_{k,i}\Dt, y^{+,\ell}_{k,i} \right) +  \gamma^{-,\ell}_{k,i}  \varphi \left(t_k+\lambda^{-,\ell}_{k,i}\Dt, y^{-,\ell}_{k,i}\right) +\gamma^{+,\ell}_{k,i}\eta^{+,\ell}_{k,i} +\gamma^{-,\ell}_{k,i}\eta^{-,\ell}_{k,i},
\end{array}
$$
for all  $\ell\in \mathcal I$, where
\begin{equation}
\label{eta_k_pm}
\eta^{\pm,\ell}_{k,i} =\begin{cases}     0 & \text{if } \lambda^{\pm,\ell}_{k,i} < 1, \\
  							I[\varphi_{k+1}]\big(y^{\pm,\ell}_{k,i}\big)- \varphi \left(t_{k+1}, y^{\pm,\ell}_{k,i}\right) & \text{if } \lambda^{\pm,\ell}_{k,i} =1.
				\end{cases}
\end{equation}
It follows from Lemma~\ref{Taylor_dim_1} that 
\begin{align}
& \gamma^{+,\ell}_{k,i}\tilde{\varphi}^{+,\ell}_{k,i}+  \gamma^{-,\ell}_{k,i}\tilde \varphi^{-,\ell}_{k,i} \nonumber
\\
& = \varphi(t_k,x_i) + \tau^{\ell}_{k,i} \Big( \partial_t \varphi(t_k,x_i) + D_{x} \varphi(t_k,x_i)^{\top} b(t_k,x_i) + \frac{p}{2}\sigma^{\ell}(t_k,x_i)^{\top} \left[ D_{x}^{2}\varphi(t_k,x_i)\sigma^{\ell}(t_k,x_i)\right]\Big) \label{eq:addendi} \\
& \quad + \gamma^{+,\ell}_{k,i}\left(\lambda^{+,\ell}_{k,i} \Dt\epsilon_{k}^{+,\ell}(\Dt,x_{i}) + \eta^{+,\ell}_{k,i} \right) +\gamma^{-,\ell}_{k,i}\left(\lambda^{-,\ell}_{k,i} \Dt\epsilon_{k}^{-,\ell}{(\Dt,x_{i})} +  \eta^{-,\ell}_{k,i}\right),\nonumber
\end{align}
where the functions $\{\epsilon_{m}^{\pm,\ell}(\cdot,x_{j})\colon [0,+\infty)\to \R \, | \, m\in \I^*_{\Delta t}, \, j\in  {\I}^{\circ}_{\Delta x}\}$ satisfy 
\begin{equation}
\label{epsilon_h_tende_0}
\epsilon(\Dt):= \max\{ |\epsilon_{m}^{\pm,\ell}(\Dt,x_{j})| \, | \, m\in \I^*_{\Delta t}, \, j\in\overset{\circ}{\I}_{\Delta x}, \, \ell\in \mathcal I\} \underset{\Dt \to 0}{\longrightarrow} 0. 
\end{equation}
Recalling~\eqref{weights_multi_dimensional_case}, we have that 
\begin{equation}
\label{resto_taylor_1}
\left| \gamma^{+,\ell}_{k,i} \lambda^{+,\ell}_{k,i} \Delta t\epsilon^{+,\ell}_{k}(\Dt,x_{i})   +\gamma^{-,\ell}_{k,i} \lambda^{-,\ell}_{k,i} \Dt \epsilon_{k}^{-,\ell}(\Dt,x_{i}) \right| \leq \tau^{\ell}_{k,i} \epsilon(\Dt) 
\end{equation}
and
\begin{equation}
\gamma^{+,\ell}_{k,i} \eta^{+,\ell}_{k,i}+\gamma^{-,\ell}_{k,i} \eta^{-,\ell}_{k,i}=  \tau_{k,i}^\ell R_{k,i}^{\ell},
\end{equation}
where
\begin{equation} 
\label{resto_taylor_2}
R_{k,i}^{\ell} :=\frac{\gamma^{+,\ell}_{k,i} \eta^{+,\ell}_{k,i}+\gamma^{-,\ell}_{k,i} \eta^{-,\ell}_{k,i}}{\Delta t\sqrt{ \lambda^{+,\ell}_{k,i}\lambda^{-,\ell}_{k,i}} }.
\end{equation}
Thus, multiplying~\eqref{eq:addendi} by  $\pi^{\ell}_{k,i}$  and taking the sum over $\ell\in \mathcal I$, we deduce from~\eqref{tau_ell_pi_ell},~\eqref{resto_taylor_1}, and~\eqref{resto_taylor_2}, that
\begin{equation*}
\sup_{k\in\mathcal{I}^{*}_{\Delta t},\,i\in\overset{\circ}{\mathcal{I}}_{\Delta x}}\Bigg| \frac{\varphi(t_{k},x_{i})-\mathcal{S}_{k,i}^{\text{{\rm fd}}}(\varphi_{k+1})}{\tau_{k,i}} -\Big(-\partial_{t}\varphi(t_{k},x_{i})+\mathcal{L}(t_{k},x_{i},D_{x}\varphi(t_{k},x_{i}),D_{x}^2\varphi(t_{k},x_{i}))\Big)\Bigg|\leq \mathcal E(\Dt, \Dx)
\end{equation*}
holds with 
\begin{equation}
\label{big_eps_prova_consistenza}
  \mathcal E(\Dt,\Dx):=   \max\left\{\left|R_{m,j}^{\ell}\right| \, \big| \, \,  m\in \I^{*}_{\Delta t}, \, j\in {\I}^{\circ}_{\Delta x}, \,  \ell\in \mathcal I\right\}+\epsilon(\Dt).
\end{equation}
Finally, let us check that $\mathcal E(\Dt,\Dx)\to 0$ if $\Dt\to 0$ and $(\Delta x)^2/\Dt \to 0$. Let us fix $m\in \I^*_{\Delta t}$, $j\in {\I}^{\circ}_{\Delta x}$, and $\ell\in \mathcal I$.  We have the following cases:
\begin{itemize}
\item[{\rm(a)}] $\lambda^{\pm,\ell}_{m,j} = 1$: Here, $\gamma_{m,j}^{\pm,\ell} = 1/2$ and, by \eqref{resto_taylor_2},  \eqref{eta_k_pm}, and \eqref{interp_estim},   we have $\left|R_{m,j}^{\ell}\right|\leq C_{\varphi} (\Delta x)^2/\Dt$. 

\item[{\rm(b)}] $\lambda^{+,\ell}_{m,j} <1$ and $\lambda^{-,\ell}_{m,j} =1$: In this case, $\eta^{+,\ell}_{m,j}=0$ and, by~\eqref{weights_multi_dimensional_case}, 
$\gamma^{-,\ell}_{m,j}/\sqrt{ \lambda^{+,\ell}_{m,j}}=1/\big(\sqrt{\lambda^{+,\ell}_{m,j}}+1\big)$. Thus, it follows from~\eqref{resto_taylor_2},~\eqref{eta_k_pm}, and~\eqref{interp_estim}, that
$$
\left|R_{m,j}^{\ell}\right|=\frac{\gamma^{-,\ell}_{m,j} \eta^{-,\ell}_{m,j}}{\Dt \sqrt{ \lambda^{+,\ell}_{m,j}}}\leq \frac{C_{\varphi} (\Delta x)^2}{\Delta t\left(\sqrt{\lambda^{+,\ell}_{m,j}}+1\right)}\leq  \frac{C_{\varphi} (\Delta x)^2}{\Dt}.
$$

\item[{\rm(c)}] $\lambda^{+,\ell}_{m,j} =1$ and $\lambda^{-,\ell}_{m,j} <1$: As in {\rm(b)}, we have  $\left|R_{m,j}^{\ell}\right|\leq C_{\varphi} (\Delta x)^2/\Dt$.
\item[{\rm(d)}] $\lambda^{\pm,\ell}_{m,j} <1$ and $\lambda^{-,\ell}_{m,j} <1$: Here, $\eta^{\pm,\ell}_{m,j}=0$ and hence, by \eqref{resto_taylor_2}, $R_{m,j}^{\ell}=0$.
\end{itemize}

Thus, the result follows from~\eqref{big_eps_prova_consistenza}, \eqref{epsilon_h_tende_0}, and cases {\rm(a)}-{\rm(d)} above. 
\end{proof}

\section{The fully discrete scheme for the HJB equation}
\label{sec:fullydiscrete}
In this section, we present the fully discrete scheme to approximate the solution to \eqref{eq:HJB}. For the time and space discretization steps, we use the the notation of the previous section. Furthermore, for every  $(k,i)\in\I^*_{\Delta t}\times {\I}^{\circ}_{\Delta x}$, $\ell\in\I$, and  $a\in A$, let us define $\lambda^{\ell, \pm}_{k,i}(a)$, $\gamma^{\pm,\ell}_{k,i}(a)$, $\tau_{k,i}^{\ell}(a)$, $\tau_{k,i}(a)$, $\pi^{\ell}_{k,i}(a)$, $y^{\pm,\ell}_{k,i}(a)$, and $\tilde{\phi}^{\pm,\ell}_{k,i}(a)$ as in the previous section, with $b(t,x)$ and $\sigma^{\ell}(t,x)$ being replaced by $b(t,x,a)$ and $\sigma^{\ell}(t,x,a)$, respectively, and, for every $\phi\colon\mathcal G_{\Delta x}\to\R$, let us set
\begin{equation}
\label{operatore_con_controllo}
\mathcal{S}^{\rm fd}_{k,i}(\phi,a)= \sum_{\ell=1}^{p}\pi^{\ell}_{k,i}(a)\left[\gamma^{+,\ell}_{k,i}(a)\tilde{\phi}^{+,\ell}_{k ,i}(a) +  \gamma^{-,\ell}_{k,i}(a)\tilde \phi^{-,\ell}_{k,i}(a)\right]  + \tau_{k,i} (a)f(t_{k},x_i,a).
\end{equation}
 
The fully discrete SL approximation $\{V_{k}\colon\mathcal G_{\Dx}\to\R\,|\,k\in \mathcal I_{\Dt}\}$ is defined by 
\begin{align}
V_{k,i}&=\inf_{a\in A}\mathcal{S}^{\rm fd}_{k,i}(V_{k+1},a)\quad\text{for all }k\in\mathcal{I}^*_{\Dt},\,i\in {\mathcal{I}}^{\circ}_{\Dx},\nonumber\\ 
V_{k,i}&=\Psi(t_k,x_i)\quad\text{for all }k\in\mathcal I^*_{\Dt},\,i\in \partial{\mathcal{I}}_{\Dx},\label{nonlinear_fully}\\
V_{N,i}&=\Psi(T,x_i)\quad\text{for all }i\in {\mathcal{I}}_{\Dx},\nonumber
\end{align}

Notice that the scheme above is explicit and admits a unique solution $\{V_{k}\colon\mathcal G_{\Dx}\to\R\,|\,k\in \mathcal I_{\Dt}\}$. Moreover, since $\Omega$ is bounded, {\bf(H1)} implies that $b$ and $\sigma$ are bounded in $\overline{Q}_{T}\times A$ and hence, arguing as in the proof of 
Proposition~\ref{consistency_fully_linear_case}, we have the following result. 
\begin{lem}
\label{lem:consistency_aux}
Assume {\bf(H1)}, let $\beta\in (\frac{1}{2},1]$, and let $\varphi\in\mathcal{F}_{\beta}(\overline{Q}_T)$ be such that $\varphi=\Psi$ on $\partial^{*}Q_{T}$. Then as $\Delta t\to 0$ and $(\Delta x)^2/\Delta t\to 0$, we have
\begin{equation}
\sup_{k\in\mathcal{I}^{*}_{\Delta t},\,i\in {\mathcal{I}}^{\circ}_{\Delta x},\,a\in A} \left| \frac{\varphi(t_{k},x_i) -\mathcal S^{\rm fd}_{k,i}(\varphi_{k+1},a)}{\tau_{k,i}(a)} - \left(-\partial_{t} \varphi(t_{k},x_{i}) + \mathcal{L}^a(t_k, x_i, D_{x}\varphi(t_{k},x_{i}) , D_{x}^2 \varphi (t_{k},x_{i}) )\right) \right|\to 0.
\end{equation}
\end{lem}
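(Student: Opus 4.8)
The plan is to repeat the proof of Proposition~\ref{consistency_fully_linear_case} verbatim, now carrying the control parameter $a\in A$ through every quantity, and to verify that all the error bounds obtained there are uniform in $a$. Since the only difference between $\mathcal{S}^{\rm fd}_{k,i}(\cdot,a)$ and the linear operator $\mathcal{S}^{\rm fd}_{k,i}$ is that $b(t,x)$, $\sigma^{\ell}(t,x)$, and $f(t,x)$ are replaced by $b(t,x,a)$, $\sigma^{\ell}(t,x,a)$, and $f(t,x,a)$, the structural identities---in particular \eqref{tau_ell_pi_ell} and the decomposition of $\gamma^{+,\ell}_{k,i}(a)\tilde\varphi^{+,\ell}_{k,i}(a)+\gamma^{-,\ell}_{k,i}(a)\tilde\varphi^{-,\ell}_{k,i}(a)$ into a Taylor part and an interpolation-error part $\eta^{\pm,\ell}_{k,i}(a)$---hold unchanged for each fixed $a$.

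First I would record the uniform bounds that make everything work. By {\bf(H1)} the maps $b$ and $\sigma$ are continuous on the compact set $\overline{Q}_T\times A$, hence bounded there by a constant independent of $a$; moreover their Lipschitz constant $L$ in $x$ is uniform in $a$. Consequently the displacement $\lambda^{\pm,\ell}_{k,i}(a)\Delta t\,b+\sqrt{p\lambda^{\pm,\ell}_{k,i}(a)\Delta t}\,\sigma^{\ell}$ defining $y^{\pm,\ell}_{k,i}(a)$ is $O(\sqrt{\Delta t})$ uniformly in $a$, and $\lambda^{\pm,\ell}_{k,i}(a)\in(0,1]$ as before. This lets me apply Lemma~\ref{Taylor_dim_1} with its remainder $\epsilon^{\pm,\ell}_{k}(\Delta t,x_i)$ replaced by an $a$-dependent remainder: inspecting its proof, the remainder is controlled purely by the regularity of $\varphi\in\mathcal{F}_{\beta}(\overline{Q}_T)$ (the modulus of continuity of $D^2_x\varphi$ and the $\beta$-Hölder bound on $t\mapsto D_x\varphi$) evaluated along displacements of the above size, so \eqref{errore_globale_apendice} upgrades to a limit that is uniform also over $a\in A$.

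With this in hand I would split the error exactly as in Proposition~\ref{consistency_fully_linear_case}: multiplying the $a$-analogue of \eqref{eq:addendi} by $\pi^{\ell}_{k,i}(a)$ and summing over $\ell$, using the first identity in \eqref{tau_ell_pi_ell}, the consistency defect is bounded by $\epsilon(\Delta t)$ (the uniform-in-$a$ Taylor remainder above) plus $\max_{\ell}|R^{\ell}_{k,i}(a)|$, where $R^{\ell}_{k,i}(a)$ collects the interpolation errors $\eta^{\pm,\ell}_{k,i}(a)$. The case analysis (a)--(d) then carries over unchanged, the point being that $\eta^{\pm,\ell}_{k,i}(a)=0$ whenever $\lambda^{\pm,\ell}_{k,i}(a)<1$, while for the surviving components \eqref{interp_estim} gives $|\eta^{\pm,\ell}_{k,i}(a)|\le C_{\varphi}(\Delta x)^2$ with $C_{\varphi}$ depending only on $\varphi$ (not on $a$, $k$, or $i$); combined with the weight bounds $\gamma^{\pm,\ell}_{k,i}(a)/\sqrt{\lambda^{\mp,\ell}_{k,i}(a)}\le 1$ this yields $|R^{\ell}_{k,i}(a)|\le C_{\varphi}(\Delta x)^2/\Delta t$ for all $a$. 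Taking the supremum over $a\in A$ and letting $\Delta t\to0$ with $(\Delta x)^2/\Delta t\to0$ closes the argument.

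The main obstacle is precisely the uniformity in $a$: every constant and modulus of continuity entering the estimates must be shown to be independent of the control. This is not automatic, but it follows from the compactness of $A$ together with the uniform-in-$a$ hypotheses in {\bf(H1)}, which guarantee $a$-independent bounds on $b$ and $\sigma$ and an $a$-independent Taylor remainder in Lemma~\ref{Taylor_dim_1}; since the interpolation constant $C_{\varphi}$ is intrinsic to $\varphi$, no further care is needed for the $R^{\ell}$ terms.
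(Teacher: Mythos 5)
Your proposal is correct and follows essentially the same route as the paper, which simply notes that by \textbf{(H1)} and the compactness of $A$ the coefficients $b$ and $\sigma$ are bounded on $\overline{Q}_T\times A$ and then argues exactly as in the proof of Proposition~\ref{consistency_fully_linear_case}. Your explicit verification that the Taylor remainder of Lemma~\ref{Taylor_dim_1} and the interpolation constant $C_{\varphi}$ are uniform in $a$ is precisely the point the paper leaves implicit.
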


The estimate in the previous lemma directly yields the following consistency result.
\begin{prop}
\label{consistenza_caso_nonlineare}
{\em{(Consistency)}} 
Assume {\bf(H1)}, let $\beta\in (\frac{1}{2},1]$, and let $\varphi\in\mathcal{F}_{\beta}(\overline{Q}_T)$ be such that $\varphi=\Psi$ on $\partial^{*}Q_{T}$. Then the following consistency result holds: as $\Delta t\to 0$ and $(\Delta x)^2/\Delta t\to 0$, we have
\begin{multline}
\label{eq:consistency_controlled_case}
\sup_{k\in\mathcal{I}^{*}_{\Delta t},\,i\in {\mathcal{I}}^{\circ}_{\Delta x}}\Bigg|\sup_{a\in A}\Bigg(\frac{\varphi(t_{k},x_{i})-\mathcal{S}_{k,i}^{\text{{\rm fd}}}(\varphi_{k+1},a)}{\tau_{k,i}(a)}\Bigg)-\\
\Big(-\partial_{t}\varphi(t_{k},x_{i})+\sup_{a\in A}\mathcal{L}^{a}\big(t_{k},x_{i},D_{x}\varphi(t_{k},x_{i}),D_{x}^2\varphi(t_{k},x_{i})\big)\Big)\Bigg|\to 0.
\end{multline}  
\end{prop}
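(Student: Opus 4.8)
The plan is to reduce the statement to the elementary fact that, for each fixed index pair $(k,i)$, the operation of taking the supremum over $a\in A$ is nonexpansive for the uniform norm on bounded functions of $a$. Concretely, for $(k,i)\in\mathcal{I}^{*}_{\Delta t}\times\mathcal{I}^{\circ}_{\Delta x}$ I would set
\[
g_a:=\frac{\varphi(t_{k},x_{i})-\mathcal{S}_{k,i}^{\rm fd}(\varphi_{k+1},a)}{\tau_{k,i}(a)},\qquad h_a:=-\partial_{t}\varphi(t_{k},x_{i})+\mathcal{L}^{a}\big(t_{k},x_{i},D_{x}\varphi(t_{k},x_{i}),D_{x}^2\varphi(t_{k},x_{i})\big),
\]
and then invoke the inequality $\left|\sup_{a\in A}g_a-\sup_{a\in A}h_a\right|\leq\sup_{a\in A}|g_a-h_a|$, which is valid as soon as both suprema are finite.

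First I would check finiteness of the two suprema: $h_a$ is bounded on $A$ because the data are continuous and $A$ is compact, while Lemma~\ref{lem:consistency_aux} guarantees that $|g_a-h_a|$ is uniformly bounded, so $g_a=h_a+(g_a-h_a)$ is bounded as well. Next I would observe that the term $-\partial_{t}\varphi(t_{k},x_{i})$ does not depend on $a$, whence $\sup_{a\in A}h_a=-\partial_{t}\varphi(t_{k},x_{i})+\sup_{a\in A}\mathcal{L}^{a}(t_{k},x_{i},D_{x}\varphi(t_{k},x_{i}),D_{x}^2\varphi(t_{k},x_{i}))$, which is precisely the second term appearing inside the outer supremum of~\eqref{eq:consistency_controlled_case}. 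Consequently, the quantity in the absolute value of~\eqref{eq:consistency_controlled_case} is bounded above by $\sup_{a\in A}|g_a-h_a|$ for every $(k,i)$. Taking the supremum over $k\in\mathcal{I}^{*}_{\Delta t}$ and $i\in\mathcal{I}^{\circ}_{\Delta x}$ on both sides, the right-hand side becomes exactly the quantity shown to vanish in Lemma~\ref{lem:consistency_aux} as $\Delta t\to 0$ and $(\Delta x)^2/\Delta t\to 0$, and~\eqref{eq:consistency_controlled_case} follows.

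There is essentially no genuine obstacle in this step: all the analytic effort—the second-order Taylor expansions, the control of the interpolation error, and the balancing of the truncation weights—has already been carried out in Lemma~\ref{Taylor_dim_1}, Proposition~\ref{consistency_fully_linear_case}, and Lemma~\ref{lem:consistency_aux}. The only point one must be careful about is to use the fact that the consistency bound of Lemma~\ref{lem:consistency_aux} is \emph{uniform} in $a\in A$, rather than merely pointwise in $a$; this uniformity is exactly what allows the supremum over $A$ to pass through the estimate without degradation, and it is the reason the lemma was stated with the supremum over $a\in A$ taken inside.
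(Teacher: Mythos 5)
Your proposal is correct and coincides with the paper's (implicit) argument: the paper states that Lemma~\ref{lem:consistency_aux} ``directly yields'' the proposition, and the content of that remark is exactly the nonexpansiveness inequality $\left|\sup_{a\in A}g_a-\sup_{a\in A}h_a\right|\leq\sup_{a\in A}|g_a-h_a|$ combined with the fact that $-\partial_t\varphi(t_k,x_i)$ is independent of $a$ and that the bound in the lemma is uniform in $a$. You have simply made explicit the step the authors left to the reader.
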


In order to prove the convergence of solutions of the numerical scheme  towards the viscosity solution to~\eqref{eq:HJB},  we will also need the following properties of the scheme. 
\begin{prop}
\label{prop:properties}
The following hold:
\begin{enumerate}[{\rm(i)}]
\item
\label{prop:monotonia}
 {\em{(Monotonicity)}} Let  $\varphi_{1},\,\varphi_2\colon \mathcal{G}_{\Dx}\to\R$ be such that $\varphi_1\leq \varphi_2$. Then we have 
$$\mathcal S^{\rm fd}_{k,i}( \varphi_1,a)\leq \mathcal S^{\rm fd}_{k,i}( \varphi_2,a) \quad \text{for all } (k,i)\in \mathcal{I}_{\Delta t}^*\times{\mathcal I}^{\circ}_{\Delta x},\, a\in A. $$
\item
\label{prop:constant_addition}
{\em{(Addition of a constant)}} Let $C\in \R$, $\varphi\colon \mathcal{G}_{\Dx}\to\R$, $(k,i)\in \mathcal{I}_{\Delta t}^*\times{\mathcal I}^{\circ}_{\Delta x}$, and $a\in A$. Then, if $\lambda^{\pm,\ell}_{k,i}(a)=1$ for all $\ell\in\I$, we have
$$
\mathcal S^{\rm fd}_{k,i}( \varphi+C,a)= S^{\rm fd}_{k,i}( \varphi,a)+C.
$$
Otherwise, it holds that 
\begin{align}
\mathcal S^{\rm fd}_{k,i}( \varphi+C,a)&\leq S^{\rm fd}_{k,i}( \varphi,a)+C\quad\text{if } C\geq 0,\nonumber\\
\mathcal S^{\rm fd}_{k,i}( \varphi+C,a)&\geq S^{\rm fd}_{k,i}( \varphi,a)+C\quad\text{if } C\leq 0.
\nonumber
\end{align}
\item
\label{prop:stability}
{\em{(Stability)}} If $\{V_{k}\colon\mathcal G_{\Dx}\to\R\,|\,k\in \mathcal I_{\Dt}\}$ solves~\eqref{nonlinear_fully}, then 
$$
\| V_{k}\|_{\infty} \leq  \|\Psi\|_{\infty} +T \|f\|_{\infty}\quad\text{for all }k\in\mathcal{I}_{\Delta t}.
$$ 
\end{enumerate}
\end{prop}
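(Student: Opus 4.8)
The plan is to verify each of the three properties directly from the definitions in~\eqref{operatore_con_controllo} and~\eqref{definition_hat_linear_multidimensional_case_fully_discrete}, exploiting the fact that all the weights are nonnegative and that the convex combination structure is preserved under the scheme.

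For \textbf{monotonicity}~\eqref{prop:monotonia}, I would first observe that each $\tilde{\phi}^{\pm,\ell}_{k,i}(a)$ depends monotonically on $\phi$. Indeed, when $\lambda^{\pm,\ell}_{k,i}(a)<1$, the quantity $\tilde{\phi}^{\pm,\ell}_{k,i}(a)=\Psi(t_k+\lambda^{\pm,\ell}_{k,i}(a)\Dt, y^{\pm,\ell}_{k,i}(a))$ does not depend on $\phi$ at all; when $\lambda^{\pm,\ell}_{k,i}(a)=1$, it equals $I[\phi](y^{\pm,\ell}_{k,i}(a))=\sum_{j}\psi_{j}(p_{\Dx}(y^{\pm,\ell}_{k,i}(a)))\phi_{j}$, and since the finite element basis functions satisfy $\psi_{j}\geq 0$, this is a nonnegative combination of the values $\phi_{j}$, hence monotone in $\phi$. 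Combining this with the fact that the weights $\pi^{\ell}_{k,i}(a)$, $\gamma^{\pm,\ell}_{k,i}(a)$, and $\tau_{k,i}(a)$ are all nonnegative (by their definitions in~\eqref{weights_multi_dimensional_case},~\eqref{eq:tau}, and~\eqref{total_tau}, recalling $\lambda^{\pm,\ell}_{k,i}(a)\in(0,1]$), the claim follows from $\varphi_1\leq\varphi_2$ applied termwise.

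For the \textbf{addition of a constant}~\eqref{prop:constant_addition}, the key algebraic identity is~\eqref{tau_ell_pi_ell}, namely $\sum_{\ell}\pi^{\ell}_{k,i}(a)=1$ and, for each fixed $\ell$, $\gamma^{+,\ell}_{k,i}(a)+\gamma^{-,\ell}_{k,i}(a)=1$ (the latter being immediate from~\eqref{weights_multi_dimensional_case}). When every $\lambda^{\pm,\ell}_{k,i}(a)=1$, each $\tilde{\phi}^{\pm,\ell}_{k,i}(a)=I[\phi](\cdot)$ and, since $I$ reproduces constants on the $\mathbb{P}_1$ basis (the $\psi_j$ form a partition of unity), adding $C$ to $\phi$ adds exactly $C$ to each such term; the two normalizations then propagate $C$ through the double sum and yield the equality. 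When some $\lambda^{\pm,\ell}_{k,i}(a)<1$, the corresponding boundary terms $\Psi(\cdot)$ are unchanged by adding $C$ to $\phi$, so only the surviving interpolation terms pick up $C$; since the weight multiplying the boundary terms is nonnegative and the total weight is normalized to one, one obtains the increment $\leq C$ when $C\geq 0$ and $\geq C$ when $C\leq 0$, as stated. I would write this out by splitting the double sum into the indices where the characteristic is truncated and those where it is not.

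For \textbf{stability}~\eqref{prop:stability}, I plan a backward induction on $k$ from $k=N$ to $k=0$. At $k=N$ the bound holds since $V_{N,i}=\Psi(T,x_i)$. For the inductive step at an interior node $i\in{\mathcal I}^{\circ}_{\Dx}$, I would use that $V_{k,i}=\inf_{a}\mathcal{S}^{\rm fd}_{k,i}(V_{k+1},a)$ and bound a single $\mathcal{S}^{\rm fd}_{k,i}(V_{k+1},a)$: each term $\tilde{V}^{\pm,\ell}_{k+1,i}(a)$ is either an interpolated value of $V_{k+1}$, bounded in absolute value by $\|V_{k+1}\|_{\infty}$ (again because $I$ is a convex combination), or a boundary value $\Psi(\cdot)$ bounded by $\|\Psi\|_{\infty}$; in either case it is bounded by $\|V_{k+1}\|_{\infty}\vee\|\Psi\|_{\infty}$. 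Using the normalizations $\sum_{\ell}\pi^{\ell}_{k,i}(a)=1$ and $\gamma^{+,\ell}_{k,i}(a)+\gamma^{-,\ell}_{k,i}(a)=1$, the weighted-average part is bounded by this same quantity, while the source term contributes at most $\tau_{k,i}(a)\|f\|_{\infty}$. The only mild subtlety, and the step I expect to require the most care, is controlling the accumulation of the $\tau_{k,i}(a)\|f\|_{\infty}$ contributions across time steps so that they sum to at most $T\|f\|_{\infty}$ rather than a larger constant; for this I would establish the uniform bound $\tau_{k,i}(a)=\Dt\sqrt{\lambda^{+,\ell}_{k,i}(a)\lambda^{-,\ell}_{k,i}(a)}\cdot(\text{normalization})\leq\Dt$ coming from $\lambda^{\pm,\ell}_{k,i}(a)\leq 1$ together with~\eqref{total_tau} and~\eqref{tau_ell_pi_ell}, so that the induction yields $\|V_{k}\|_{\infty}\leq\|\Psi\|_{\infty}+(N-k)\Dt\|f\|_{\infty}\leq\|\Psi\|_{\infty}+T\|f\|_{\infty}$.
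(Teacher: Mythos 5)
Your proposal is correct and follows essentially the same route as the paper: monotonicity from the nonnegativity of the weights $\pi^{\ell}_{k,i}(a)$, $\gamma^{\pm,\ell}_{k,i}(a)$ and of the $\mathbb{P}_1$ basis functions, the constant-addition property from the normalizations $\sum_{\ell}\pi^{\ell}_{k,i}(a)=1$ and $\gamma^{+,\ell}_{k,i}(a)+\gamma^{-,\ell}_{k,i}(a)=1$ together with the fact that the truncated (boundary) terms do not see the shift, and stability by the same one-step bound $\|V_{k}\|_{\infty}\leq\max(\|V_{k+1}\|_{\infty},\|\Psi\|_{\infty})+\Delta t\|f\|_{\infty}$ iterated backward from $V_{N}=\Psi(T,\cdot)$. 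The only point where you are slightly more explicit than the paper is the verification that $\tau_{k,i}(a)\leq\Delta t$, which indeed follows from \eqref{total_tau} and $\lambda^{\pm,\ell}_{k,i}(a)\leq 1$, and your argument there is sound.
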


\begin{proof} 
\eqref{prop:monotonia}: Since $I[\varphi_{1}]\leq I[\varphi_{2}]$,  the assertion follows from~\eqref{operatore_con_controllo} using that 
$\gamma^{\pm,\ell}_{k,i}(a)$, and $\pi^\ell_{k,i}(a)$ are nonegative for all $ (k,i,\ell)\in \mathcal{I}_{\Delta t}^*\times {\mathcal I}^{\circ}_{\Delta x}\times\I$ and $a\in A$. 
\vspace{0.2cm}

\eqref{prop:constant_addition}: Follows directly from~\eqref{operatore_con_controllo}.

\eqref{prop:stability}: For every $(k,i)\in \mathcal{I}_{\Delta t}^*\times {\mathcal I}^{\circ}_{\Delta x}$ and $a\in A$, it follows from the definition of $\mathcal S^{\rm fd}_{k,i}(\cdot,a)$, the equalities  $\gamma_{k,i}^{+,\ell}(a)+\gamma_{k,i}^{-,\ell}(a)=1$, for all $\ell\in\mathcal{I}$, and $\sum_{\ell=1}^{p}\pi^{\ell}_{k,i}(a)=1$,  that  
$$
 \left|\mathcal S^{\rm fd}_{k,i}(V_{k+1},a)\right|
 \leq 
 \max(\| V_{k+1}\|_{\infty},\|\Psi\|_{\infty} )+ \tau_{k,i}(a) \|f(\cdot,\cdot,a)\|_{\infty}\leq \max(\| V_{k+1}\|_{\infty},\|\Psi\|_{\infty} )+ \Delta t \|f\|_{\infty}
$$
and hence, by~\eqref{nonlinear_fully},
$$
\|V_{k}\|_{\infty}\leq\max(\| V_{k+1}\|_{\infty},\|\Psi\|_{\infty} )+ \Delta t \|f\|_{\infty}.
$$
Finally, by iteration and using that $V_{N,i}=\Psi(T,x_{i})$ for all $i\in\mathcal{I}_{\Delta x}$, we get that $\|V_{k}\|_{\infty}\leq  \|\Psi\|_{\infty} + T \|f\|_{\infty}$ for all $k\in\mathcal{I}_{\Delta t}$. 
\end{proof}
In what follows  we set 
\begin{equation}
\label{eq:extension}
V_{\Delta t,\Delta x}(t,x):= I [V_{[t/\Delta t]}](x)\quad\text{for all }(t,x)\in\overline Q_T.
\end{equation}

Let $(\Delta t_{n}, \Delta x_n)\in (0,+\infty)^2$ be such that $\lim_{n\to\infty} (\Delta t_n,\Delta x_n)= (0,0)$ and $(\Delta x_n)^2/\Delta t_n \to 0$.  For every $n\in\NN$, let us set $V_n= V_{\Delta t_n,\Delta x_n}$ and define 
\be
\label{discrete_semi_limits}
\overline V(t,x) :=\underset{\substack{s\to t, y\to x\\ n\to\infty}}{\lim\sup} V_{n}(s,y)\quad \text{and} \quad 
\underline V(t,x) :=\underset{\substack{s\to t, y\to x\\ n\to\infty}}{\lim\inf} V_{n}(s,y)
\ee
for all $(t,x)\in \overline{Q}_{T}$. Notice that the stability of the scheme in Proposition~\ref{prop:properties}\eqref{prop:stability}  ensures that $\overline V(t,x)$ and $\underline V(t,x)$ are finite.
\subsection{Convergence in the case where the boundary conditions hold in the strong sense}
We begin by considering the convergence at the boundary points $(t,x)\in\partial^{*}Q_{T}$.
\begin{lem}
\label{prop:boundary_conv}
Assume that {\bf(H1)} hold. Then the following hold:
\begin{enumerate}[{\rm(i)}]
\item 
\label{prop:boundary_conv_i}
We have $\overline{V}(T,\cdot)=\underline{V}(T,\cdot)=\Psi(T,\cdot)$ on $\overline{\Omega}$. 
\item 
\label{prop:boundary_conv_ii}
Assume that in addition {\bf(H2)} holds. Then $\overline{V}=\underline{V}=\Psi$ on $\partial^* Q_T$.
\end{enumerate}  
\end{lem}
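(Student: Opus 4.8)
The plan is to sandwich the numerical solution between smooth barriers and then pass to the half-relaxed limits $\overline V,\underline V$ of~\eqref{discrete_semi_limits}. The engine is an elementary discrete comparison principle: if $\varphi\in C^{1,2}(\overline Q_T)$ is a \emph{strict} classical supersolution of the HJB equation near the point of interest, i.e. $-\partial_t\varphi+\sup_{a\in A}\mathcal L^a(\cdot,D_x\varphi,D_x^2\varphi)\ge\eta_0>0$, and lies on the correct side of the data at the discrete characteristics that exit $\Omega$ (namely $\varphi\ge\Psi$ at such exit points), then the Taylor expansion underlying the consistency analysis (Lemma~\ref{Taylor_dim_1}, which holds for any smooth test function) controls the non-exiting characteristics, the sign condition $\varphi\ge\Psi$ controls the exiting ones, and the monotonicity in Proposition~\ref{prop:properties}\eqref{prop:monotonia} together with the constant-addition property in Proposition~\ref{prop:properties}\eqref{prop:constant_addition} package these into the statement that $\varphi$ is a discrete supersolution of the scheme for $\Delta t$ and $(\Delta x)^2/\Delta t$ small. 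A backward induction in $k$ (using that $\varphi$ dominates $V_N=\Psi(T,\cdot)$ at terminal time and $\Psi$ at boundary nodes) then yields $V_{k,i}\le\varphi(t_k,x_i)$; the symmetric statement for strict subsolutions gives lower bounds. The stability bound in Proposition~\ref{prop:properties}\eqref{prop:stability} is used to control $V$ on the inner boundary of the localization region.

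For part~\eqref{prop:boundary_conv_i}, fix $x_0\in\overline\Omega$ and $\epsilon>0$. Since $\Psi$ is continuous on $\partial^*Q_T$ by {\bf(H1)} and $b,\sigma,f$ are bounded, I would use the barriers $\varphi^{\pm}_\epsilon(t,x)=\Psi(T,x_0)\pm\big(\epsilon+K|x-x_0|^2+L(T-t)\big)$. Choosing $K$ large makes $\varphi^+_\epsilon$ dominate $\Psi(T,\cdot)$ on $\overline\Omega$ and the stability bound $\|\Psi\|_\infty+T\|f\|_\infty$ away from $x_0$, and makes $\varphi^+_\epsilon\ge\Psi$ at the exit points near $(T,x_0)$; here continuity of $\Psi$ is what forces the exit values $\Psi(t',y')$, with $(t',y')$ close to $(T,x_0)$, to stay below $\Psi(T,x_0)+\epsilon$. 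Choosing $L$ large then makes $\varphi^+_\epsilon$ a strict supersolution. No spatial barrier at $\partial\Omega$, and hence no part of {\bf(H2)}, is needed here: over the vanishing remaining time $T-t_k$ the discrete characteristics move only $O(\sqrt{T-t_k})$, so the localized comparison only sees the terminal data and nearby boundary values. The comparison principle above gives $V_{k,i}\le\varphi^+_\epsilon(t_k,x_i)$ locally, whence $\overline V(T,x_0)\le\varphi^+_\epsilon(T,x_0)=\Psi(T,x_0)+\epsilon$; letting $\epsilon\to0$ gives $\overline V(T,x_0)\le\Psi(T,x_0)$, and symmetrically $\underline V(T,x_0)\ge\Psi(T,x_0)$. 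Since $\underline V\le\overline V$, the two coincide with $\Psi(T,\cdot)$.

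For part~\eqref{prop:boundary_conv_ii}, I would combine part~\eqref{prop:boundary_conv_i} (which already covers $\{T\}\times\overline\Omega$) with a treatment of the lateral boundary $[0,T)\times\partial\Omega$ that now exploits {\bf(H2)}. For the lower bound, the subsolution $g$ from {\bf(H2)(ii)} matches $\Psi$ on $[0,T]\times\partial\Omega$ and satisfies $g\le\Psi$; the perturbation $g_\epsilon:=g-\epsilon(T-t)$ is a strict subsolution lying below $\Psi$ at exit points, so the discrete comparison gives $\underline V\ge g_\epsilon$, and letting $\epsilon\to0$ yields $\underline V\ge\Psi$ on $[0,T]\times\partial\Omega$. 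For the upper bound, I would use the barrier $\zeta$ from {\bf(H2)(i)}: near $(\bar t,\bar x)\in[0,T)\times\partial\Omega$ set $w(t,x)=\Psi(\bar t,\bar x)+\epsilon+K\zeta(x)+L(\bar t-t)$, localized to $\Omega_\delta$. Because $\zeta=0$ on $\partial\Omega$, one has $w=\Psi(\bar t,\bar x)+\epsilon\ge\Psi$ on $\partial\Omega$ near $\bar x$, hence $w\ge\Psi$ at exit points; and because~\eqref{super_solution_barrier} supplies the \emph{strict} margin $\eta>0$, a large multiple $K\zeta$ makes $\sup_{a\in A}\mathcal L^a(\cdot,D_xw,D_x^2w)$ dominate the remaining bounded terms, so that $w$ is a strict supersolution. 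Localizing the comparison to $\Omega_\delta$ (bounding $V$ on $\{\mathrm{dist}(\cdot,\partial\Omega)=\delta\}$ by stability) gives $\overline V(\bar t,\bar x)\le\Psi(\bar t,\bar x)+\epsilon$, hence $\overline V\le\Psi$ after $\epsilon\to0$. Together with $\underline V\le\overline V$ this forces $\overline V=\underline V=\Psi$ on $\partial^*Q_T$.

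I expect the main obstacle to be the rigorous construction and verification of the upper barrier on the lateral boundary: one must turn a \emph{continuous} (possibly non-smooth) datum $\Psi$ into a smooth strict discrete supersolution that equals $\Psi$ on $\partial\Omega$, while the discrete characteristics may cross $\partial\Omega$ within a single step. This is precisely where the strict positivity $\eta>0$ in~\eqref{super_solution_barrier} is indispensable, since it must beat the consistency error of order $\tau_{k,i}(a)$ uniformly in $a\in A$, and where the one-sided constant-addition inequality of Proposition~\ref{prop:properties}\eqref{prop:constant_addition} is needed to account for the mismatch between $w$ and the value $\Psi$ actually used by the scheme at exit points. Care is also required in localizing the backward induction, since the wide semi-Lagrangian stencil can reach outside the region where the barrier is a supersolution; this is handled by keeping the localization radius fixed while $\Delta t\to0$ and invoking stability on the inner boundary.
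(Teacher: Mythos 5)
Your part (i) and your lower bound in part (ii) are essentially sound and close in spirit to the paper's proof: the paper also sandwiches $V_n$ between smooth strict super- and subsolutions lying on the correct side of $\Psi$ on the whole parabolic boundary and runs a global backward induction using monotonicity, the constant-addition property, and consistency (it builds the terminal barriers by mollifying $\Psi(T,\cdot)$ rather than by a quadratic penalty, and it extracts the lateral lower bound from $g$ as you do, though packaged through the substitution $\tilde\Psi=\Psi-g$). One caveat on part (i): your claim that the discrete characteristics move only $O(\sqrt{T-t_k})$ over the remaining time is false for the iterated scheme --- $j$ steps each of size $O(\sqrt{\Delta t})$ can compound to a displacement of order $j\sqrt{\Delta t}$, which does not shrink --- so a pointwise-localized comparison is not actually available; your argument survives only because the quadratic barrier can be promoted to a \emph{global} one (with $K,L$ chosen large in terms of $\epsilon$ and the modulus of continuity of $\Psi$ so that it dominates $\Psi$ on all of $\partial^*Q_T$), and that is how the step should be phrased.

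The genuine gap is the upper bound on the lateral boundary in part (ii). For the same reason as above, the backward induction for $w(t,x)=\Psi(\bar t,\bar x)+\epsilon+K\zeta(x)+L(\bar t-t)$ cannot be confined to a spatial neighborhood of $\bar x$; it must be run over all nodes of the collar $\Omega_{\delta/2}$ (with stability capping the inner edge, as in the paper), and it therefore encounters exit points $(t',y')$ with $y'\in\partial\Omega$ far from $\bar x$ and $t'$ far from $\bar t$. At such points $\zeta(y')=0$, so $w(t',y')=\Psi(\bar t,\bar x)+\epsilon+L(\bar t-t')$, which need not dominate $\Psi(t',y')$; worse, $L(\bar t-t')$ is negative for $t'>\bar t$, and at $t=T$ it destroys the base case $w(T,\cdot)\geq\Psi(T,\cdot)$ of the induction. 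The missing ingredient is {\bf(H2)}(ii) on the \emph{upper} side: the paper first reduces to data with $\tilde\Psi=\Psi-g\equiv 0$ on $[0,T]\times\partial\Omega$ and $\tilde f\geq 0$ (its Step 2), after which the time-independent barrier $K\zeta+\theta$ dominates the lateral data everywhere and dominates the terminal data because $\sup_{y}\big(\Psi(T,y)-\theta\big)/\zeta(y)<+\infty$. Equivalently, you could repair your barrier by replacing the constant $\Psi(\bar t,\bar x)+\epsilon$ with $g(t,x)+\epsilon$, which equals $\Psi+\epsilon$ on the entire lateral boundary and is a subsolution by \eqref{eq:g_bc}, so the margin $K\eta$ supplied by \eqref{super_solution_barrier} still absorbs the bounded terms coming from $g$. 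Either way, the upper bound cannot be closed using \eqref{super_solution_barrier} alone, as your write-up suggests.
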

\begin{proof} Let us fix $n\in\NN$. In what follows, we write $\mathcal S^{\Psi,f}$ for  the operator $\mathcal S^{\rm fd}$, defined in \eqref{operatore_con_controllo}, to underline its  dependence on the boundary condition $\Psi$ and on the running cost $f$. Notice that, by definition, for all $\varphi\colon \mathcal G_{\Delta x_n} \to \R$, $(k,i)\in\I^*_{\Delta t_n}\times {\I}^{\circ}_{\Delta x_n}$, and $a\in A$, we have
\be
\label{monotonicity_scheme_boundary_running_terms}
\mathcal S^{\Psi_1,f_1}_{k,i}(\varphi,a)\leq \mathcal S^{\Psi_2,f_2}_{k,i}(\varphi,a) \quad \text{if $\Psi_1\leq \Psi_2$ and $f_1\leq f_2$.}
\ee
{\rm(i)} By the Tietze extension theorem, there exists a continuous extension of $\Psi$ to $[0,T]\times \R^d$, which, for notational convenience, we still denote by $\Psi$. Let us fix $\overline{\varepsilon}>0$ and denote by $\omega$ a strictly increasing modulus of continuity of $\Psi$ on the set $\{(t,y)\in [0,T]\times \R^d \, | \, \text{dist}(y,\Omega) \leq \overline{\varepsilon}\}$.  Let $\phi\in C^{\infty}(\R^d)$ be  non-negative, supported in the unit ball in $\R^d$, and such that  $\int_{\R^d}\phi(x){\rm d} x=1$. Given $\varepsilon \in (0,\overline{\varepsilon})$, let us set $\phi_{\varepsilon}(\cdot)=\frac{1}{\varepsilon^d}\phi(\cdot/{\varepsilon})$ and define $\Psi^\varepsilon_T\colon\overline{\Omega} \to \R$ by $\Psi^\varepsilon_T(x) = (\phi_{\varepsilon}*\Psi(T,\cdot))(x) + 2\omega(\varepsilon)$. By definition,  one has
\begin{equation}
\label{differenza_Psi_eps}
0<\omega(\vare) \leq \Psi^\varepsilon_T(x) - \Psi(T,x)   \leq 3 \omega(\vare) \quad \text{for all } x\in \overline{\Omega}.
\end{equation}
Given $K_1>0$, define $v^\vare\colon \overline{Q}_{T}\to \R$ by
\begin{equation}
\label{definizione_v_bar}
 v^{\vare}(t,x) : = K_1 (T-t) + \Psi^\varepsilon_T(x) \quad \text{for all } (t,x)\in\overline{Q}_{T},
\end{equation}
which, for $K_1=K_1(\varepsilon)$ large enough, is a supersolution of \eqref{eq:HJB} in the strong sense. Indeed, if $(t,x)\in Q_{T}$ then \eqref{definizione_v_bar}, the continuity of the coefficients, and the compactness of $A$  imply that 
\be
\label{supersolution_v_vare}
-\partial_t v^{\vare}(t,x)+\sup_{a\in A} \mathcal{L}^{a}(t, x,D_x v^{\vare}(t,x), D^2_{x} v^{\vare}(t,x)) \geq 0
\ee
 for $K_1$ large enough. Now, suppose that $(t,x)\in \partial^* Q_{T}$. Conditions \eqref{differenza_Psi_eps} and \eqref{definizione_v_bar} imply that, if $t=T$, then   $v^{\vare}(T,x)>\Psi(T,x)$ for all $x\in \overline{\Omega}$, while if $t<T$,  by taking $K_1>\omega(T)/\vare$, for all $x\in \partial \Omega$ one has  
$$
v^{\vare}(t,x) \geq  \frac{\omega(T)}{\vare}(T-t) + \Psi(T,x)  + \omega(\vare) \geq \frac{\omega(T)}{\vare}(T-t) + \Psi(t,x)  - \omega(T-t) +  \omega(\vare). 
$$
In turn, since $\omega$ is increasing, we deduce that 
\begin{equation}
\label{veps_psi_bordo}
v^{\vare} (t,x) > \Psi(t,x)  \quad \text{for all } (t,x) \in \partial^* Q_{T}.
\end{equation}
Next, let us show that for any $(k,i)\in {\mathcal{I}}_{\Delta t_n}\times {\mathcal{I}}_{\Delta x_n}$ one has
\begin{equation}
\label{eq:uppervbar}
V_{k,i} \leq v^{\vare}(t_k,x_i)+e_{1,n},
\end{equation}
where $e_{1,n}\to 0$  as $n\to\infty$. For $(k,i)\in (\mathcal I^*_{\Dt_n}\times \partial{\mathcal{I}}_{\Dx_n})\cup (\{N\}\times {\mathcal{I}}_{\Delta x_n})$, inequality~\eqref{veps_psi_bordo} implies $V_{k,i}=\Psi(t_k,x_i) \leq v^{\vare}(t_k,x_i)$. Now, for $(k,i)\in\mathcal{I}^{*}_{\Delta t_n} \times {\mathcal{I}}^{\circ}_{\Delta x_n}$ inequalities~\eqref{veps_psi_bordo} and~\eqref{monotonicity_scheme_boundary_running_terms}  yield
\begin{align}
\label{eq:Sbar}
 \underset{a\in A}\sup \left(\frac{v^\vare(t_k,x_i) -{\mathcal S}_{k,i}^{\Psi,f}(v^\vare_{k+1},a)}{\tau_{k,i}(a)}\right)\geq  \underset{a\in A}\sup \left(\frac{v^\vare(t_k,x_i) -{\mathcal S}_{k,i}^{v^\vare,f}(v^\vare_{k+1},a)}{\tau_{k,i}(a)}\right).
\end{align}
Then, by~\eqref{supersolution_v_vare} and  Proposition~\ref{consistenza_caso_nonlineare} applied to ${\mathcal S}^{v^\vare,f}$, we get
\begin{equation}
\label{eq:sup_schemes}
\begin{array}{l} 
  \underset{a\in A}\sup \left(\frac{ v^\vare(t_k,x_i) -\mathcal S_{k,i}^{\Psi,f}(v^\vare_{k+1},a)}{\tau_{k,i}(a)}\right)\\ 
 \geq  \underset{a\in A}\sup \left(\frac{v^\vare(t_k,x_i) -{\mathcal S}_{k,i}^{v^\vare,f}(v^\vare_{k+1},a)}{\tau_{k,i}(a)}\right) - \left(-\partial_{t} v^\vare(t_k,x_i) + \underset{a\in A}\sup\,\mathcal{L}^a(t_k,x_i, D_x v^\vare(t_k, x_i) , D_{x}^2 v^\vare(t_k,x_i))\right) \\[3pt]
  \geq  -\delta_n,
\end{array}
\end{equation}
where $0\leq \delta_n \to 0$ as $n\to \infty$. Now, let $\hat{a}\in A$ be such that 
$$
\underset{a\in A}\sup \left(\frac{ v^\vare(t_k,x_i) -\mathcal S_{k,i}^{\Psi,f}(v^\vare_{k+1},a)}{\tau_{k,i}(a)}\right) \leq   \frac{ v^\vare(t_k,x_i) -\mathcal S_{k,i}^{\Psi,f}(v^\vare_{k+1},\hat{a})}{\tau_{k,i}(\hat{a})} +\delta_n.
$$
It follows from \eqref{eq:sup_schemes} that 
\begin{equation}
\label{eq:ineqbar}
 v^\vare (t_k,x_i) \geq  \mathcal S_{k,i}^{\Psi,f}(v^\vare_{k+1},\hat{a}) - 2\tau_{k,i}(\hat{a})\delta_n\geq
 \underset{a\in A}\inf\mathcal S_{k,i}^{\Psi,f}(v^\vare_{k+1},a) - 2\delta_n \Delta t_n.
\end{equation}
In particular, by taking $k=N-1$, using that $v^\vare(T,x_j)\geq \Psi(T,x_j)= V_{N,j}$ for all $j\in\mathcal{I}_{\Delta x_n}$, Proposition~\ref{prop:properties}\eqref{prop:monotonia} yields 
$$
\begin{array}{rcl}
 v^\vare (t_{N-1},x_i) &\geq&   \underset{a\in A}\inf\mathcal S_{N-1,i}^{\Psi,f}(v^\vare_{N},a) - 2\delta_n\Delta t_n \\[10pt]
\; & \geq &  \underset{a\in A}\inf\mathcal S_{N-1,i}^{\Psi,f}(V_{N},a) - 2\delta_n\Delta t_n \\[10pt]
 \; &=& V_{N-1,i} - 2\delta_n\Delta t_n.
\end{array}
$$
By taking $k=N-2$, using~\eqref{eq:ineqbar} and Proposition~\ref{prop:properties}\eqref{prop:constant_addition} we have 
$$
\begin{array}{rcl}
 v^\vare (t_{N-2},x_i) &\geq&   \underset{a\in A}\inf\mathcal S_{N-2,i}^{\Psi,f}(v^\vare_{N-1},a) - 2\delta_n \Delta t_n \\[10pt]
\; & \geq &  \underset{a\in A}\inf\mathcal S_{N-2,i}^{\Psi,f}( V_{N-1} - 2\delta_n \Delta t_n,a) - 2\delta_n \Delta t_n \\[10pt]
\; & \geq & \underset{a\in A}\inf\mathcal S_{N-2,i}^{\Psi,f}( V_{N-1},a) - 4\delta_n \Delta t_n\\[10pt]
 \; &= & V_{N-2,i} - 4\delta_n \Delta t_n.
\end{array}
$$
Proceeding in this manner for $k=N-3,...,0$ we get that \eqref{eq:uppervbar} holds with  $e_{1,n} =2T\delta_{n}$. 

On the other hand, given $K_{2}>0$ and $\vare\in (0,\overline{\vare})$, defining  $\tilde{\Psi}^\varepsilon_T:\overline{\Omega} \to \R$ by $\tilde{\Psi}^\varepsilon_T(x) = (\phi_{\varepsilon}*\Psi(T,\cdot))(x)- 2\omega(\varepsilon)$ and   $v_\vare\colon\overline{Q}_{T}\to \R$ by
\begin{equation}
\label{definizione_v_bar_bis}
 v_{\vare}(t,x) : = -K_2(T-t) + \tilde{\Psi}^\varepsilon_T(x) \quad \text{for all } (t,x)\in\overline{Q}_{T},
\end{equation}
 if $K_{2}=K_{2}(\vare)$ is large enough, we can argue as before to get the existence of $e_{2,n}$ such that  $e_{2,n}\to 0$  as $n\to\infty$, and
\begin{equation}
\label{inequality_veps_due}
  v_{\vare}(t_k,x_i)-e_{2,n}\leq V_{k,i}  \quad \text{for all } (k,i) \in \I_{\Delta t_n}\times \mathcal{I}_{\Delta x_n}.
\end{equation}
Thus, from \eqref{eq:uppervbar}, \eqref{inequality_veps_due}, and \eqref{eq:extension}, for every $(s,y)\in \overline{Q}_{T}$, we have that 
$$
-\|v_{\vare}(s,\cdot)-I[v_{\vare}(s,\cdot)]\|_{\infty}+v_\vare(s,y)-e_{2,n}\leq V_{n}(s,y) \leq  \|v^{\vare}(s,\cdot)-I[v^{\vare}(t,\cdot)]\|_{\infty}+ v^{\vare}(s,y)+e_{1,n}.
$$
Considering a sequence $(s_n,y_n)$ converging  to $(T,x)$, recalling~\eqref{interp_estim}, and taking the limit $n\to\infty$ we obtain 
\begin{equation}
\label{three_inequalities}
\tilde{\Psi}^\varepsilon_T(x) \leq  \underline{V}(T,x) \leq   \overline{V}(T,x)\leq \Psi^\varepsilon_T(x)
\end{equation}
and hence {\rm(i)} follows by letting $\vare\to 0$ in \eqref{three_inequalities}.

{\rm(ii)} We proceed in two steps.\\
{\it \underline{Step 1}{\rm:} Assume $f\geq 0$, $\Psi\geq 0$, and $\Psi=0$ in $[0,T]\times\partial \Omega$}. Let $\delta>0$ and let $\zeta \in C^2(\overline{\Omega}_{\delta})$ be as in {\bf (H2)}(i). Given $K,\,\theta>0$, let us set
\begin{equation}
\label{def:w}
w(x) := K \zeta(x) +\theta\quad\text{for all }x\in \overline{\Omega}_{\delta}.
\end{equation}
Let us show that one can always choose $K>0$ large enough to ensure that
\begin{itemize}
\item[(a)] $\sup_{a\in A}\mathcal{L}^a(t,x, D_x w(x) , D_{x}^2 w(x)) \geq 0 $ for all $(t,x)\in[0,T)\times  \Omega_\delta$,
\item[(b)] $w(x)>\Psi(t,x)$ for all $(t,x)\in\big(\{T\}\times\overline{\Omega}_{\delta}\big)\cup \big([0,T)\times\partial\Omega\big)$,
\item[(c)] $w(x)>\sup\{\|V_{k}\|_{\infty}\,|\,k\in\I_{\Delta t_n}\}$ for all $x\in\overline \Omega_\delta\setminus \Omega_{\delta/2}$.
\end{itemize}

Indeed, it follows from~\eqref{super_solution_barrier} that, if $K> \|f\|_\infty/\eta$, then {\rm(a)} holds. On the other hand, if $(t,x)\in [0,T)\times \partial \Omega$, then $w(x)=\theta>0=\Psi(t,x)$. If $(t,x)\in\{T\}\times\overline{\Omega}_{\delta}$, since $\Psi(T,x)=0$ and $\zeta(x)=0$ if $x\in\partial\Omega$, we have $\lim_{y\to x}\frac{\Psi(T,y)-\theta}{\zeta(y)}=-\infty$ and, hence, $\mathfrak{s}:=\sup_{y\in \overline \Omega_\delta\setminus\partial \Omega} \frac{\Psi(T,y)-\theta}{\zeta(y)}<+\infty$. Thus, {\rm(b)} holds with $K>\mathfrak{s}$. Finally, setting $M:= \|\Psi\|_{\infty} +T \|f\|_{\infty}$ and taking $K> \frac{(M-\theta)^+}{\inf_{x\in \overline \Omega_\delta\setminus \Omega_{\delta/2}} \zeta(x)}$, we obtain that, for every $x\in \overline \Omega_\delta\setminus \Omega_{\delta/2}$, 
$w(x)>M$ and assertion {\rm(c)} follows from Proposition~\ref{prop:properties}(iii). 

Set $\mathcal{I}_{\Delta x}^{\circ,\delta}=\{i\in \I_{\Delta x}^{\circ}\,|\, x_{i}\in \Omega_{\delta/2}\}$ and let $n\in\NN$ be large enough in order to ensure that $y_{k,i}^{\pm,\ell}(a)\in\overline{\Omega}_{\delta}$ for any $k \in \I_{\Delta t_{n}}$ and $i\in \I_{\Delta x_{n}}^{\circ,\delta}$, $a\in A$, and $\ell\in\I$. Arguing as in the proof of Proposition~\ref{consistenza_caso_nonlineare}, it holds that 
\begin{equation}
\label{def:eta_n}
\eta_{n}:=\sup_{k\in\mathcal{I}^{*}_{\Delta t},\,i\in \I_{\Delta x_{n}}^{\circ,\delta}}\Bigg| \sup_{a\in A}\Bigg(\frac{w(x_{i})-\mathcal{S}_{k,i}^{w,f}(w,a)}{\tau_{k,i}(a)}\Bigg) -\sup_{a\in A}\mathcal{L}^{a}(t_{k},x_{i},D_{x}w(x_{i}),D_{x}^2 w(x_{i}))\Bigg|\underset{n\to\infty}{\longrightarrow} 0,
\end{equation}
Let us show that 
\begin{equation}
\label{eq:backward_consistency_w}
V_{k,i} \leq w(x_i) + 2\eta_{n}T \quad\text{for all }k\in\I_{\Delta t_{n}},\, i\in\mathcal{I}_{\Delta x}^{\circ,\delta}\cup \partial{\mathcal{I}}_{\Dx_n}.
\end{equation}
Indeed, if $(k,i)\in \big(\I_{\Delta t_{n}}^{*}\times \partial{\mathcal{I}}_{\Dx_n}\big)\cup\big(\{N\}\times (\mathcal{I}_{\Delta x_n}^{\circ,\delta}\cup \partial{\mathcal{I}}_{\Dx_n})\big)$,~\eqref{eq:backward_consistency_w} follows from property (b) above and~\eqref{nonlinear_fully}. On the other hand, for every $(k,i)\in\I_{\Delta t_{n}}^{*}\times \mathcal{I}_{\Delta x_n}^{\circ,\delta}$, the positivity of the weights $\gamma_{k,j}^{\pm,\ell}(a)$ and property (b) yield ${\mathcal S}_{k,i}^{w,f}(w,a)\geq {\mathcal S}_{k,i}^{\Psi,f}(w,a)$ for all $a\in A$ and, hence, 
\begin{align}\label{eq:Sbarspace}
 \underset{a\in A}\sup \left(\frac{w(x_i) - {\mathcal S}_{k,i}^{w,f}(w,a)}{\tau_{k,i}(a)}\right) \leq  \underset{a\in A}\sup \left(\frac{w(x_i) -{\mathcal S}_{k,i}^{\Psi,f}(w,a)}{\tau_{k,i}(a)}\right).
\end{align}
In turn, property (a) and~\eqref{def:eta_n} imply that 
\begin{equation}
\underset{a\in A}\sup \left(\frac{w(x_i) -\mathcal S_{k,i}^{\Psi,f}(w,a)}{\tau_{k,i}(a)}\right) \geq  \underset{a\in A}\sup \left(\frac{w(x_i) -{\mathcal S}_{k,i}^{w,f}(w,a)}{\tau_{k,i}(a)}\right) - \underset{a\in A}\sup\,\mathcal{L}^a(t_k,x_i, D_x w , D_{x}^2 w)    \geq  - \eta_{n}
\label{eq:consistency_w}
\end{equation}
and, since there exists $\widehat{a}\in A$ such that 
$$
\frac{w(x_i) -\mathcal S_{k,i}^{\Psi,f}(w,\widehat{a})}{\tau_{k,i}(\widehat{a})}+\eta_{n}\geq \underset{a\in A}\sup \left(\frac{w(x_i) -\mathcal S_{k,i}^{\Psi,f}(w,a)}{\tau_{k,i}(a)}\right),
$$
we obtain that 
\begin{equation}
 w(x_i)+2\eta_{n}\Delta t_{n} \geq  \underset{a\in A}\inf\mathcal S_{k,i}^{\Psi,f}(w,a).
\label{eq:ineqbarspace}
\end{equation}
Thus, for every $i\in \mathcal{I}_{\Delta x}^{\circ,\delta}$, it follows form property (b) that  
$$
V_{N-1,i}=\underset{a\in A}\inf\mathcal S_{N-1,i}^{\Psi,f}(V_{N},a)=\underset{a\in A}\inf\mathcal S_{N-1,i}^{\Psi,f}(\Psi(T,\cdot),a)\leq\underset{a\in A}\inf\mathcal S_{N-1,i}^{\Psi,f}(w,a)\leq  w(x_i)+2\eta_{n}\Delta t_{n}
$$
and hence, using property (c), Proposition~\ref{prop:properties}\eqref{prop:monotonia},\eqref{prop:constant_addition}, and~\eqref{eq:ineqbarspace} we get 
$$
V_{N-2,i}=\underset{a\in A}\inf\mathcal S_{N-2,i}^{\Psi,f}(V_{N-1},a)\leq \underset{a\in A}\inf\mathcal S_{N-2,i}^{\Psi,f}(w,a) +2\eta_{n}\Delta t_{n}\leq w(x_{i})+4\eta_{n}\Delta t_{n}.
$$
Arguing in this manner for $k=N-3,\hdots,0$, we obtain~\eqref{eq:backward_consistency_w}.

Finally, for every $(s,y)\in [0,T]\times \overline\Omega_{\delta/4}$ and $n$ large enough, one has 
$$
0\leq V_{n}(s,y)=  I [V_{[s/{\Delta t_n}],\cdot}](y) \leq  I [w](y) + 2\eta_{n}T \leq w(y) +\|w-I[w]\|_{\infty} + 2\eta_{n}T$$
and, hence, we deduce from~\eqref{def:w} and~\eqref{interp_estim} that, for every $(t,x)\in[0,T)\times\partial\Omega$, 
$$
0 \leq  \underline{V}(t,x) \leq  \overline{V}(t,x)\leq \theta
$$
and, since $\theta>0$ is arbitrary, we obtain that
$$
\underset{\substack{s\to t, y\to x\\ n\to \infty}}{\lim} V_{n}(s,y) = 0.
$$

{\it \underline{Step 2}: The general case.} Let $g$ be as in {\bf(H2)}{\rm(ii)} and let $v$ be the unique viscosity solution, in the strong sense, to~\eqref{eq:HJB}. Let us set
\begin{align}
\tilde f&:= f+\partial_t g+b^{\top}D_x g+\frac12 \text{Tr}[\sigma\sigma^\top  D^2_x g]\quad\text{in } \overline{Q}_{T}\times A,
\label{def:tilde_f}\\
\tilde \Psi&:=\Psi-g\quad\text{in }\partial^{*}Q_T.
\label{def:tilde_Psi}
\end{align}
By definition, $\tilde f\geq 0$ on $\overline Q_T\times A$, $\tilde \Psi\geq 0$ on $\partial^* Q_T$ and $\tilde \Psi=0$ on $[0,T]\times\partial \Omega$.
Let us denote by $\tilde v$ and $\tilde V_{n}$, respectively, the solution to~\eqref{eq:HJB} and the solution to the scheme~\eqref{nonlinear_fully} obtained after replacing $f$ by $\tilde f$ and $\Psi$ by $\tilde \Psi$. It follows from Proposition~\ref{prop:weak_comparison} that $v=\tilde v+g$ and, by Step 1, for every $(t,x)\in [0,T)\times \partial\Omega$ we have 
\begin{equation}\label{eq:limtilde}
\underset{\substack{(s,y)\to (t,x)\\ n\to \infty}}{\lim} \tilde V_{n}(s,y) = 0.
\end{equation}
Now, let us set
\begin{multline}
\label{def:eta_n_tilde}
\tilde{\eta}_n:=\sup_{k\in\mathcal{I}^{*}_{\Delta t_n},\,i\in {\mathcal{I}}^{\circ}_{\Delta x_n},\,a\in A}\Bigg| \frac{g(t_k,x_i) -  \mathcal S^{g,0}_{k,i}(g^{k+1}, a)}{\tau_{k,i}(a)} \\
 -\left(  -\partial_t g(t_k,x_i) - \langle b(t_k,x_i,a),D_x g(t_k,x_i)\rangle -\frac12 \text{Tr}[\sigma\sigma^\top(t_k,x_i,a)  D^2_x g(t_k,x_i)] \right) \Bigg|
\end{multline}
and let us show that 
\begin{equation}
\label{eq:Vg}
\sup_{k\in\mathcal{I}_{\Delta t_n},\,i\in \mathcal{I}_{\Delta x_n}}\left|(V_n)_{k,i}-\big((\tilde{V}_n)_{k,i}+g(t_k,x_i)\big) \right| \leq (N-k)\Delta t_n\tilde{\eta}_n.
\end{equation}
Indeed, by~\eqref{def:tilde_Psi}, we have that
\begin{equation}
\label{eq:equality_bordo}
(V_{n})_{k,i}=(\tilde  V_{n})_{k,i}+g(t_k,x_i)\quad\text{for all }(k,i) \in\big(\I_{\Delta t_{n}}^{*}\times \partial{\mathcal{I}}_{\Dx_n}\big)\cup\big(\{N\}\times  \mathcal{I}_{\Dx_n}\big).
\end{equation}
On the other hand, for every $(k,i)\in\mathcal{I}^{*}_{\Delta t_n}\times{\mathcal{I}}^{\circ}_{\Delta x_n}$,~\eqref{operatore_con_controllo},  \eqref{def:tilde_f}, and~\eqref{def:eta_n_tilde} yield
\begin{align}
(\tilde{V}_n)_{k,i}&=\inf_{a\in A}\mathcal S_{k,i}^{\tilde \Psi,\tilde f}\big((\tilde{V}_n)_{k+1},a\big)\nonumber\\
& \leq  \inf_{a\in A} \left(\mathcal S_{k,i}^{\tilde \Psi,f}\big((\tilde{V}_n)_{k+1},a\big) - g(t_k,x_i) + \mathcal S^{g,0}_{k,i}(g_{k+1}, a) + \tau_{k,i}(a)\tilde{\eta}_{n}\right)\nonumber\\
& =  \inf_{a\in A}\left(\mathcal S_{k,i}^{\Psi,f}((\tilde{V}_n+g)_{k+1},a)+ \tau_{k,i}(a)\tilde{\eta}_{n}\right)- g(t_k,x_i) \nonumber\\
&\leq  \inf_{a\in A}\mathcal S_{k,i}^{\Psi,f}((\tilde{V}_n+g)_{k+1},a) - g(t_k,x_i)+\Delta t_{n}\tilde{\eta}_{n}. 
\label{eq:leq_inequality_operator}
\end{align}
Now, assume that, for $k\in\I_{\Delta t_n}^{*}$, we have $(\tilde{V}_n)_{k+1}\leq (V_n)_{k+1}-g_{k+1}+(N-k-1)\Delta t_{n}\tilde{\eta}_{n}$. Then it follows from~\eqref{eq:leq_inequality_operator} and Proposition~\ref{prop:properties}\eqref{prop:monotonia},\eqref{prop:constant_addition} that
\begin{align}
(\tilde{V}_n)_{k,i} & \leq  \inf_{a\in A}\mathcal S_{k,i}^{\Psi,f}\Big((V_n)_{k+1} - g_{k+1} +(N-k-1)\Delta t_n \tilde{\eta}_{n} + g_{k+1},a\Big) - g(t_k,x_i)+ \Delta t_n\tilde{\eta}_{n}\nonumber\\
&\leq\inf_{a\in A}\mathcal S_{k,i}^{\Psi,f}((V_n)_{k+1},a) - g(t_k,x_i)+(N-k-1)\Delta t_n \tilde{\eta}_{n}+\Delta t_n\tilde{\eta}_{n}\nonumber\\
& = (V_n)_{k,i}-g(t_k,x_i)+(N- k)\Delta t_n \tilde{\eta}_{n}.
\label{eq:inequality_one_side}
\end{align}
Analogously, arguing as in~\eqref{eq:leq_inequality_operator}, for every $(k,i)\in\mathcal{I}^{*}_{\Delta t_n}\times{\mathcal{I}}^{\circ}_{\Delta x_n}$ we have 
\begin{align*}
(\tilde{V}_n)_{k,i} & \geq  \inf_{a\in A}  \mathcal S_{k,i}^{\Psi,f}((\tilde{V}_n+g)_{k+1},a) - g(t_k,x_i)- \Delta t_{n}\tilde{\eta}_{n}
\end{align*} 
and hence, if $(\tilde{V}_n)_{k+1}\geq (V_n)_{k+1}-g_{k+1}-(N-k-1)\Delta t_{n}\tilde{\eta}_{n}$ for some $k\in\I_{\Delta t_n}^{*}$, arguing as in~\eqref{eq:inequality_one_side} we have  
\begin{equation}
\label{eq:inequality_other_side}
(\tilde{V}_n)_{k,i}\geq (V_n)_{k,i}-g(t_k,x_i)-(N- k)\Delta t_n \tilde{\eta}_{n}.
\end{equation}
Altogether,~\eqref{eq:Vg} follows from~\eqref{eq:equality_bordo},~\eqref{eq:inequality_one_side}, and~\eqref{eq:inequality_other_side}. In turn, setting $\mathcal{E}_n=\sup_{(t,x)\in \overline{Q}_{T}}\big|g(t,x)-I[g_{[t/\Delta t_n]}](x)\big|$, it follows from~\eqref{eq:extension} that, for every $(s,y)\in\overline{Q}_{T}$, one has
\begin{equation}
\label{eq:estimate_differrence_value_functions}
\left| \tilde{V}_{n}(s,y)- (V_{n}(s,y) - g(s,y))\right| \leq (N-k)\Delta t_n\tilde{\eta}_{n}+ \mathcal E_n\leq T\tilde{\eta}_{n}+ \mathcal E_n.
\end{equation}
Since Lemma~\ref{lem:consistency_aux} and ~\eqref{interp_estim} imply that $\tilde{\eta}_{n}\to 0$ and $\mathcal E_n\to 0$, respectively, we deduce from~\eqref{eq:limtilde} that, for every $(t,x)\in [0,T)\times \partial\Omega$, we have 
\begin{equation}\label{eq:limtilde_final}
\underset{\substack{(s,y)\to (t,x)\\ n\to \infty}}{\lim}  V_{n}(s,y) = g(t,x)=\Psi(t,x),
\end{equation}
from which the result follows.
\end{proof}

\begin{theo}[Convergence to the value function in the strong framework]
Assume that {\bf(H1)} and {\bf(H2)} hold. Moreover, suppose that $\partial \Omega $ is of class $C^2$ and that, as $n\to\infty$, $(\Delta x_n)^2/\Delta t_n\to 0$. Then  
\begin{equation} 
\label{eq:uniform_convergence_value_function}
V_{n}\underset{n\to\infty}{\longrightarrow} V \quad \text{uniformly in $\overline{Q}_T$},
\end{equation}
where $V$ is the value function defined in~\eqref{value_function_exit_from_open}.
\label{theo:main_result_1}
\end{theo}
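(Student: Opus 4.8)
The plan is to follow the half-relaxed limits method of Barles and Souganidis, combining the monotonicity, stability, and consistency of the scheme established above with the comparison principle of Proposition~\ref{prop:weak_comparison}. Recall the relaxed limits $\overline V$ and $\underline V$ defined in~\eqref{discrete_semi_limits}, which are finite by the stability estimate of Proposition~\ref{prop:properties}\eqref{prop:stability}, upper and lower semicontinuous respectively, and satisfy $\underline V\le\overline V$ on $\overline Q_T$ by construction. The goal is to show that $\overline V$ and $\underline V$ are, respectively, a viscosity sub- and supersolution to~\eqref{eq:HJB} in the strong sense; the comparison principle then forces $\overline V\le\underline V$, hence $\overline V=\underline V=:v$, and the coincidence of the relaxed limits on the compact set $\overline Q_T$ yields the uniform convergence $V_n\to v$. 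Since $v$ is then a continuous viscosity solution in the strong sense, Proposition~\ref{prop:value_function_unique_viscosity_sol} identifies it with the value function $V$.

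The core step is the interior viscosity inequality, which I would carry out for the subsolution property, the supersolution case being symmetric. Let $(t_0,x_0)\in Q_T$ and let $\phi\in C^\infty(Q_T)$ be such that $\overline V-\phi$ has a strict local maximum at $(t_0,x_0)$ with $\overline V(t_0,x_0)=\phi(t_0,x_0)$. Using~\eqref{discrete_semi_limits}, the piecewise-linear-in-space, piecewise-constant-in-time structure of $V_n$ from~\eqref{eq:extension}, and the smoothness of $\phi$, one extracts a subsequence and grid nodes $(t_{k_m},x_{i_m})\to(t_0,x_0)$ at which $(k,i)\mapsto V_{k,i}-\phi(t_k,x_i)$ attains a local maximum, with $V_{k_m,i_m}\to\overline V(t_0,x_0)$ and $\xi_m:=V_{k_m,i_m}-\phi(t_{k_m},x_{i_m})\to 0$. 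Since $(t_0,x_0)$ is interior, for $m$ large all discrete characteristics $y^{\pm,\ell}_{k_m,i_m}(a)$ issued from $x_{i_m}$ stay in $\Omega$, so that $\lambda^{\pm,\ell}_{k_m,i_m}(a)=1$ for every $\ell$ and $a$; in this regime the scheme reduces to the standard one of~\eqref{forward_operator_unbounded_case} (Remark~\ref{rem:consistency_with_unbounded_case}\eqref{rem:consistency_with_unbounded_case_i}), one has $\tau_{k_m,i_m}(a)=\Delta t_m$, and Proposition~\ref{prop:properties}\eqref{prop:constant_addition} holds with equality. The local maximality at the level $k_m+1$ gives $V_{k_m+1,j}\le\phi(t_{k_m+1},x_j)+\xi_m$ for all relevant nodes $x_j$; applying the scheme, the monotonicity of Proposition~\ref{prop:properties}\eqref{prop:monotonia}, and the exact constant-addition identity yields $\phi(t_{k_m},x_{i_m})\le\inf_{a\in A}\mathcal S^{\rm fd}_{k_m,i_m}(\phi_{k_m+1},a)$, that is, $\sup_{a\in A}\big(\phi(t_{k_m},x_{i_m})-\mathcal S^{\rm fd}_{k_m,i_m}(\phi_{k_m+1},a)\big)/\tau_{k_m,i_m}(a)\le 0$. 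Letting $m\to\infty$ and invoking the consistency of the scheme, which at interior nodes (where all $\lambda^{\pm,\ell}_{k,i}(a)=1$) holds for an arbitrary smooth test function, as in the proof of Proposition~\ref{consistenza_caso_nonlineare}, produces the subsolution inequality $-\partial_t\phi(t_0,x_0)+\sup_{a\in A}\mathcal L^a(t_0,x_0,D_x\phi(t_0,x_0),D^2_x\phi(t_0,x_0))\le 0$.

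With the interior sub- and supersolution properties in hand, I would invoke Lemma~\ref{prop:boundary_conv}, whose parts \eqref{prop:boundary_conv_i} and \eqref{prop:boundary_conv_ii} give $\overline V=\underline V=\Psi$ on the whole parabolic boundary $\partial^* Q_T$; in particular $\overline V\le\Psi$ and $\underline V\ge\Psi$ there, so that $\overline V$ and $\underline V$ are, respectively, sub- and supersolution to~\eqref{eq:HJB} in the strong sense of Definition~\ref{strong_boundary_conditions}. Since $\partial\Omega$ is of class $C^2$, Proposition~\ref{prop:weak_comparison} applies and gives $\overline V\le\underline V$ on $\overline Q_T$, whence $\overline V=\underline V=:v\in C(\overline Q_T)$; this common value is the uniform limit of $V_n$, and by Proposition~\ref{prop:value_function_unique_viscosity_sol} it equals $V$, proving~\eqref{eq:uniform_convergence_value_function}.

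Finally, I expect the main obstacle to be the bookkeeping of the core step rather than any conceptual difficulty: one must justify the passage from the relaxed limit $\overline V$ to grid-node near-maximizers of $V_{k,i}-\phi(t_k,x_i)$ through the interpolation operator $I[\cdot]$ (controlling the $O((\Delta x)^2)$ interpolation error of the smooth $\phi$ against the mesh) and verify that, for interior points and $n$ large, the truncation parameters satisfy $\lambda^{\pm,\ell}_{k,i}(a)=1$, so that the constant-addition identity holds exactly and the consistency estimate applies without requiring $\phi$ to match $\Psi$ on the boundary. The handling of the boundary, which would otherwise be the delicate point for Dirichlet data, is entirely absorbed into Lemma~\ref{prop:boundary_conv} and the strong comparison principle.
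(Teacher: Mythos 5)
Your proposal follows exactly the same Barles--Souganidis half-relaxed-limit route as the paper's proof: the interior sub- and supersolution properties obtained from monotonicity, stability, and consistency (which the paper delegates to standard references), the boundary values supplied by Lemma~\ref{prop:boundary_conv}, the comparison principle of Proposition~\ref{prop:weak_comparison}, the identification with $V$ via Proposition~\ref{prop:value_function_unique_viscosity_sol}, and uniform convergence from the coincidence of the semicontinuous envelopes on the compact set $\overline{Q}_T$. The additional detail you provide for the interior step (for nodes near an interior point and $n$ large all truncation parameters $\lambda^{\pm,\ell}_{k,i}(a)$ equal $1$, so the scheme reduces to the unconstrained one, the constant-addition identity is exact, and consistency holds for arbitrary smooth test functions) is correct and is precisely the content the paper leaves to the cited standard techniques.
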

\begin{proof}
We follow the classical approach introduced in \cite{BS91}.  Notice that, by~\cite[Lemma 1.5 in Chapter V]{MR1484411}, $\overline{V}$ and $\underline{V}$, defined in~\eqref{discrete_semi_limits}, are upper and lower semicontinuous, respectively. Thus, thanks to  Proposition \ref{prop:properties} and standard techniques (see e.g.~\cite{CamFal95,DebrJako13}), one has that $\overline{V}$ and $\underline{V}$ are sub- and supersolutions to~\eqref{parabolic_HJB} in $Q_T$, respectively. In turn, it follows from Lemma~\ref{prop:boundary_conv} that $\overline{V}$ and $\underline{V}$ are, respectively, sub- and supersolutions to~\eqref{eq:HJB} in $\overline{Q}_T$ in the strong sense of Definition~\ref{strong_boundary_conditions}. Therefore, by Proposition~\ref{prop:weak_comparison}, one has $\overline{V}\leq\underline{V}$ in $\overline{Q}_{T}$ and, since the reverse inequality always holds, we deduce that $\overline{V}=\underline{V}$ in $\overline{Q}_{T}$. It follows from Proposition~\ref{prop:value_function_unique_viscosity_sol} that  $\overline{V}=\underline{V}=V$ in $\overline{Q}_{T}$ and, by~\cite[Lemma 1.9 in Chapter V]{MR1484411}, we obtain that~\eqref{eq:uniform_convergence_value_function} holds.
\end{proof}
\vspace{-0.2cm}
\begin{rem} Error estimates for general monotone numerical schemes are provided in \cite{PRR_error} by means of the Krylov's ``shaking coefficient'' regularization technique. Here, if compared with {\bf(H1)}-{\bf(H2)}, stronger assumptions on the dynamics are needed in order to ensure the H\"older continuity of the value function necessary to perform the aforementioned regularization step.
\end{rem}

\subsection{Convergence in the case where the boundary conditions hold in the weak sense}\label{subsec:weak}

As discussed in Section~1, in general the value function $V$ is not guaranteed to be continuous and hence it satisfies equation \eqref{eq:HJB} in the weak sense of Definition~\ref{weak_boundary_conditions}. The next result shows that, in this case, convergence still holds in $Q_{T}$ provided that the strong comparison principle in Definition~\ref{strong_comparison_principle} holds.

\begin{theo}[Convergence to the value function in the weak framework] Suppose that {\bf(H1)} and the strong comparison principle hold. Then, if $(\Delta x_{n})^{2}/\Delta t_n\to 0$ as $n\to\infty$, we have
\begin{equation} 
\label{eq:uniform_convergence_value_function_weak_framework}
V_{n}\underset{n\to\infty}{\longrightarrow} V \quad \text{locally uniformly in $[0,T]\times\Omega$},
\end{equation}
where $V$ is the value function defined in~\eqref{value_function_exit_from_open}.
\label{theo:main_result_2}
\end{theo}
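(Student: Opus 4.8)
The plan is to follow the half-relaxed limit method of~\cite{BS91}, exactly as in the proof of Theorem~\ref{theo:main_result_1}, but with two changes dictated by the weak framework: I would establish that the relaxed limits $\overline V$ and $\underline V$ from~\eqref{discrete_semi_limits} satisfy the boundary conditions in the \emph{weak} sense of Definition~\ref{weak_boundary_conditions}, and I would invoke the strong comparison principle of Definition~\ref{strong_comparison_principle} in place of Proposition~\ref{prop:weak_comparison}. Concretely, once $\overline V$ and $\underline V$ are shown to be, respectively, a weak sub- and supersolution to~\eqref{eq:HJB}, the strong comparison principle gives $\overline V\le\underline V$ on $[0,T]\times\Omega$; since $\underline V\le\overline V$ always holds, the two coincide there and, being equal, define a continuous function on $[0,T]\times\Omega$ which equals the value function $V$ by Proposition~\ref{prop:uniqueness_sol_weak_sense}. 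The local uniform convergence~\eqref{eq:uniform_convergence_value_function_weak_framework} then follows from~\cite[Lemma 1.9 in Chapter V]{MR1484411} applied on compact subsets of $[0,T]\times\Omega$.

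The interior subsolution/supersolution property of $\overline V$ and $\underline V$ in $Q_T$ is obtained exactly as in Theorem~\ref{theo:main_result_1}, from monotonicity and stability (Proposition~\ref{prop:properties}) together with consistency (Proposition~\ref{consistenza_caso_nonlineare}) by the standard argument, and the terminal condition $\overline V(T,\cdot)=\underline V(T,\cdot)=\Psi(T,\cdot)$ is furnished by Lemma~\ref{prop:boundary_conv}\eqref{prop:boundary_conv_i}; both use only {\bf(H1)}, so no analogue of {\bf(H2)} is needed. What remains, and where the proof genuinely departs from the strong case, is property (i.2) of Definition~\ref{weak_boundary_conditions} at the lateral boundary.

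For the subsolution property I would fix $(t_0,x_0)\in[0,T)\times\partial\Omega$ and $\phi\in C^\infty(\overline Q_T)\subset\mathcal F_\beta(\overline Q_T)$ such that $\overline V-\phi$ has a strict local maximum at $(t_0,x_0)$ of value $0$. If $\overline V(t_0,x_0)\le\Psi(t_0,x_0)$ there is nothing to prove, so I assume $\overline V(t_0,x_0)>\Psi(t_0,x_0)$ and set $2\rho:=\overline V(t_0,x_0)-\Psi(t_0,x_0)>0$. The usual extraction produces, along a subsequence, grid points $(k_j,i_j)$ with $(t_{k_j},x_{i_j})\to(t_0,x_0)$ at which $V_{n_j}-\phi$ attains a local maximum $\xi_j:=V_{k_j,i_j}-\phi(t_{k_j},x_{i_j})\to 0$; since boundary nodes carry the value $\Psi<\overline V(t_0,x_0)$, the maximizing nodes are interior for $j$ large, whence $V_{k_j,i_j}\le\mathcal S^{\Psi,f}_{k_j,i_j}(V_{k_j+1},a)$ for every $a\in A$. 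The key device is to compare this with the scheme $\mathcal S^{\phi,f}$ obtained by using $\phi$ itself as boundary datum: on interior branches ($\lambda^{\pm,\ell}_{k_j,i_j}(a)=1$) both schemes use the same interpolation operator, so the local maximum gives $I[V_{k_j+1}]\le I[\phi_{k_j+1}]+\xi_j$ with \emph{no} interpolation error, while on boundary branches ($\lambda^{\pm,\ell}_{k_j,i_j}(a)<1$) the actual scheme evaluates $\Psi$ near $(t_0,x_0)$, which for $j$ large is $\le\phi-\rho$. Summing with the nonnegative weights $\pi^\ell_{k_j,i_j}(a)\gamma^{\pm,\ell}_{k_j,i_j}(a)$ yields
\[
\mathcal S^{\Psi,f}_{k_j,i_j}(V_{k_j+1},a)\le \mathcal S^{\phi,f}_{k_j,i_j}(\phi_{k_j+1},a)+\xi_j-\rho\,W^{\mathrm{bd}}_j(a),
\]
where $W^{\mathrm{bd}}_j(a)\ge 0$ is the total weight of the boundary branches. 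Together with $V_{k_j,i_j}=\phi(t_{k_j},x_{i_j})+\xi_j$ the term $\xi_j$ cancels, giving $\phi(t_{k_j},x_{i_j})-\mathcal S^{\phi,f}_{k_j,i_j}(\phi_{k_j+1},a)\le-\rho W^{\mathrm{bd}}_j(a)\le 0$. Dividing by $\tau_{k_j,i_j}(a)>0$, invoking the consistency of $\mathcal S^{\phi,f}$ (Lemma~\ref{lem:consistency_aux} applied to the scheme whose boundary datum is $\phi$, valid because $(\Delta x_n)^2/\Delta t_n\to 0$), and letting $j\to\infty$ gives $-\partial_t\phi(t_0,x_0)+\mathcal L^a(t_0,x_0,D_x\phi,D_x^2\phi)\le 0$ for every $a$, hence the required inequality after taking the supremum. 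The supersolution property of $\underline V$ is symmetric: one works at a grid minimum, selects $a_j$ attaining the infimum in~\eqref{nonlinear_fully} (possible by compactness of $A$), and uses $\Psi\ge\phi+\rho$ on boundary branches to obtain the favourable sign $+\rho W^{\mathrm{bd}}_j(a_j)$.

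The main obstacle is precisely this lateral step. Unlike in the strong case, the test function $\phi$ need not agree with $\Psi$ on $\partial\Omega$, so the consistency estimate cannot be applied to $\mathcal S^{\Psi,f}$ directly; passing to the auxiliary scheme $\mathcal S^{\phi,f}$ circumvents this. The two essential points are that the sign condition $\overline V(t_0,x_0)>\Psi(t_0,x_0)$ forces the boundary branches to contribute with the correct (nonpositive) sign $-\rho W^{\mathrm{bd}}_j(a)$, and that the shared interpolation operator cancels the $O((\Delta x)^2)$ terms \emph{exactly}, so that no ratio $(\Delta x)^2/\tau_{k,i}(a)$ — which could blow up when several $\lambda^{\pm,\ell}_{k,i}(a)$ are small — ever appears, the interpolation error being confined to the consistency of $\mathcal S^{\phi,f}$ and therefore already controlled by the CFL condition $(\Delta x_n)^2/\Delta t_n\to 0$.
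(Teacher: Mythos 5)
Your proposal is correct and reaches the desired conclusion, but at the decisive lateral-boundary step it takes a genuinely different route from the paper. The paper works with continuum maximum points $(s_n,y_n)$ of $V_n-\phi$, writes $V_n(s_n,y_n)$ as a barycentric combination of nodal values at the vertices of the simplex containing $y_n$, and splits into two cases according to whether some vertex carrying positive limiting weight has all of its discrete characteristics inside $\Omega$: if not, it shows the total exiting weight is bounded below by $1/(2p)$ and passes to the limit in \eqref{eq:v_n_convex_prelimit} to obtain $\overline V(t,x)\le\Psi(t,x)$, contradicting the standing assumption; if so, it runs the consistency argument at that vertex, where no $\Psi$-values enter the operator. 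You instead work at grid maximum points and compare $\mathcal S^{\Psi,f}$ with the auxiliary operator $\mathcal S^{\phi,f}$ that uses the test function as boundary datum --- the same device the paper employs in the proof of Lemma~\ref{prop:boundary_conv} with $\mathcal S^{v^{\vare},f}$ and $\mathcal S^{w,f}$, so Lemma~\ref{lem:consistency_aux} does apply with $\Psi$ replaced by $\phi$. Under $\overline V(t_0,x_0)>\Psi(t_0,x_0)$ the exiting branches contribute with a nonpositive sign, the interior branches are controlled exactly by the maximum property, and you get the PDE inequality for every $a$ with no case analysis and no lower bound on the exiting weight; your argument in effect subsumes the paper's contradiction case, since when the exiting weight does not vanish the discarded term $-\rho W^{\mathrm{bd}}_j(a)/\tau_{k_j,i_j}(a)$ merely makes the inequality stronger. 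This is shorter and arguably cleaner. Two routine points should not be left implicit: (i) the maximum of the interpolated $V_n-\phi$ is generally not attained at a node, so you must pass to the maximum of the grid function $(k,i)\mapsto V_{k,i}-\phi(t_k,x_i)$ over a fixed neighbourhood and verify, via the usual $O(\Delta t_n+\Delta x_n)$ comparison between $I[V_k]$ and the nodal values, that these grid maximizers still converge to $(t_0,x_0)$ with $\xi_j\to 0$; (ii) in the supersolution half the infimum over $a$ in \eqref{nonlinear_fully} need not be attained (continuity of $a\mapsto\mathcal S^{\rm fd}_{k,i}(\cdot,a)$ is not guaranteed), so one should take $\epsilon_j$-minimizers with $\epsilon_j$ small relative to $\inf_{a\in A}\tau_{k_j,i_j}(a)>0$ so that $\epsilon_j/\tau_{k_j,i_j}(a_j)\to 0$ after dividing. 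Neither affects validity; the remaining ingredients (interior sub/supersolution property, terminal condition via Lemma~\ref{prop:boundary_conv}\eqref{prop:boundary_conv_i}, the strong comparison principle, Proposition~\ref{prop:uniqueness_sol_weak_sense}, and \cite[Lemma 1.9 in Chapter V]{MR1484411}) are used exactly as in the paper.
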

\begin{proof}
Following the strategy in~\cite{BS91}, one needs to show that $\overline V$ (resp. $\underline V$), defined in~\eqref{discrete_semi_limits}, is a viscosity subsolution (resp. supersolution) to \eqref{eq:HJB}, in the weak sense of Definition~\ref{weak_boundary_conditions}. Then one can use the strong comparison principle in Definition~\ref{strong_comparison_principle} to deduce~\eqref{eq:uniform_convergence_value_function_weak_framework}. Since the proof of the supersolution property is similar to the one for the subsolution, we only provide the proof of the latter. 

Fix $(t,x)\in\overline Q_T$. If $t=T$, it follows from Lemma~\ref{prop:boundary_conv}\eqref{prop:boundary_conv_i} that $\overline V(t,x) =\Psi(t,x)$. Now, suppose that $t<T$  and let $\phi\in C^\infty(\overline Q_T)$ be such that $(t,x)$ is a local maximum point of $(\overline V-\phi)$. Without any loss of generality, we may assume that $(t,x)$ is a strict local maximum point and that $\overline V(t,x)=\phi(t,x)$. Then, by standard arguments in the theory of viscosity solutions (see e.g.~\cite[Chapter V]{MR1484411}), there exists a sequence $(s_n,y_n)\in \overline{Q}_{T}$ converging to $(t,x)$ such that, up to some subsequence, $(s_n,y_n)$ are global maximum points for 
 $(V_{n}-\phi)$ and $V_{n} (s_n,y_n)\to \overline V(t,x)$. Then, for every $(s,y)\in\overline{Q}_{T}$, we have  
 \be 
 \label{eq:dis}
 V_{n} (s,y)\leq \phi(s,y)+\xi_n,\quad\text{where}\quad\xi_n:= V_{n} (s_n,y_n)-\phi(s_n,y_n)\underset{n\to\infty}{\longrightarrow} 0.
 \ee
If $x\in \Omega$, the viscosity subsolution property of $\overline V$  follows by standard arguments using the monotonicity and consistency of the scheme (see e.g.~\cite{CamFal95,DebrJako13}).   
Thus, let us assume that $x\in\partial\Omega$.  If  $\overline V(t,x)\leq  \Psi(t,x)$, then condition {\rm(i.2)} in Definition~\ref{weak_boundary_conditions} trivially holds. Therefore, suppose that $\overline V(t,x)>\Psi(t,x)$.  Let  $k\colon \mathbb N\to \{0,\ldots, N-1\}$ be such that $s_{n}\in[t_{k(n)}, t_{k(n)+1})$ and let $\mathcal{J}_{n}=\{i\in \I_{\Delta x_n}\,|\,\psi_{i}(y_{n})\neq 0\}$. Notice that the cardinal $|\mathcal{J}_{n}|$ of $\mathcal{J}_{n}$ is at most $d+1$ and hence there exists $M\in\{1,\hdots,d+1\}$ such that, up to some subsequence, $|\mathcal{J}_{n}|=M$ for all $n\in\NN$. In particular, $\mathcal{J}_{n}$ can be written as $\mathcal{J}_{n}=\{i_{n}^{1},\hdots,i_{n}^{M}\}$ with $i_{n}^{q}\in\mathcal{I}_{\Delta x_n}$ for all $q=1,\hdots,M$. Let $(\overline{\psi}_{1},\hdots,\overline{\psi}_{M})$ be a limit point of $ (\psi_{i_{n}^{1}}(y_{n}),\hdots,\psi_{i_{n}^{M}}(y_n))$. By extracting a subsequence, we can assume that $(\psi_{i_{n}^{1}}(y_{n}),\hdots,\psi_{i_{n}^{M}}(y_n))\to (\overline{\psi}_{1},\hdots,\overline{\psi}_{M})$ and, hence, $\overline{\psi}_{q}\geq 0$, for all $q=1,\hdots,M$, and $\sum_{q=1}^{M}\overline{\psi}_{q}=1$. Set $\mathcal{I}_{+}=\{q\in\{1,\hdots,M\}\,|\, \overline{\psi}_{q}>0\}$ and let $\overline{a}\in\text{argmax}_{a\in A}\mathcal{L}^{a}(t, x,D_x\phi(t,x) , D_x^2\phi(t,x))$.

Suppose that there exists a subsequence, still labeled by $n$, such that for all $q\in\I_{+}$ and $n\in\NN$ large enough, there exists $(o,\ell)\in\{+,-\}\times\I$ such that $\lambda_{k(n),i_{q}^{n}}^{o,\ell}(\overline{a})<1$.  Then there exist  subsets $I_{1}^{{\rm out}},\hdots,I_{M}^{{\rm out}}$ of $\{+,-\}\times \I$, with $I_{q}^{{\rm out}}\neq \emptyset$ for all $q\in\I_{+}$, such that, up to some subsequence, for every $q\in\{1,\hdots,M\}$ and $(o,\ell)\in\{+,-\}\times\I$, $\lambda_{k(n),i_{n}^{q}}^{o,\ell}(\overline{a})<1$ if and only if $(o,\ell)\in I_{q}^{{\rm out}}$. Setting $\mathcal{C}^{{\rm out}}:=\{(q,o,\ell)\in \{1,\hdots,M\}\times\{+,-\}\times\I\,|\,(o,\ell)\in I_{q}^{{\rm out}}\}$ and $\mathcal{C}^{{\rm in}}:=(\{1,\hdots,M\}\times\{+,-\}\times \I)\setminus \mathcal{C}^{{\rm out}}$, it follows from~\eqref{eq:extension} and ~\eqref{nonlinear_fully} that
\begin{align}
V_{n}(s_{n},y_{n})&= \sum_{q=1}^{M}\psi_{i_{n}^{q}}(y_{n})V_{k(n),i_{n}^{q}}\nonumber\\
&\leq \sum_{(q,o,\ell)\in\mathcal{C}^{{\rm out}}}\psi_{i_{n}^{q}}(y_{n})\pi_{k(n),i_{n}^{q}}^{\ell}(\overline{a})\gamma_{k(n),i_{n}^{q}}^{o,\ell}(\overline{a})\Psi\big(t_{k(n)}+\lambda_{k(n),i_{n}^{q}}^{o,\ell}(\overline{a})\Delta t,y_{k(n),i_{n}^{q}}^{o,\ell}(\overline{a})\big) 
\label{eq:v_n_convex_prelimit}\\
+\sum_{(q,o,\ell)\in\mathcal{C}^{{\rm in}}}\psi_{i_n^{q}}(y_{n})&\pi_{k(n),i_n^q}^{\ell}(\overline{a})\gamma_{k(n),i_n^q}^{o,\ell}(\overline{a})
V_{n}(t_{k(n)+1},y_{k(n),i_n^q}^{o,\ell}(\overline{a}))+\sum_{q=1}^{M}\psi_{i_n^{q}}(y_{n})\tau_{k(n),i_n^q}(\overline{a})f(t_{k(n)},x_{i_n^q},\overline{a}).\nonumber
\end{align}
By compactness, there exist $\{\overline{\pi}_{q}^{\ell}\,|\,q\in\{1,\hdots,M\},\,\ell\in\I\}\subset [0,1]$ and $\{\overline{\gamma}_{q}^{o,\ell}\,|\,q\in\{1,\hdots,M\},(o,\ell)\in\{+,-\}\times\I\}\subset [0,1]$ such that  $\sum_{\ell\in\I}\overline{\pi}^{\ell}_{q}=1$ for all $q\in\{1,\hdots,M\}$,  $\overline{\gamma}_{q}^{+,\ell}+\overline{\gamma}_{q}^{-,\ell}=1$  for all $(q,\ell)\in\{1,\hdots,M\}\times\I$, and, up to a subsequence,  $\pi_{k(n),i_{n}^{q}}^{\ell}(\overline{a})\to\overline{\pi}_{q}^{\ell}$, and $\gamma_{k(n),i_{n}^{q}}^{o,\ell}(\overline{a})\to \overline{\gamma}_{q}^{o,\ell}$. Since $V_{n}(s_n,y_n)\to\overline{V}(t,x)$ and $\tau_{k(n),i_{n}^{q}}(\overline{a})\to 0$ for all $q\in\{1,\hdots,M\}$, it follows from~\eqref{eq:v_n_convex_prelimit}, the continuity of $\Psi$,  and~\eqref{discrete_semi_limits} that 
\begin{equation}
\label{eq:v_n_convex_limit}
\overline{V}(t,x)\leq \left(\sum_{(q,o,\ell)\in\mathcal{C}^{{\rm out}}}\overline{\psi}_{q}\overline{\pi}_{q}^{\ell}\overline{\gamma}_{q}^{o,\ell}\right)\Psi(t,x)+\left(\sum_{(q,o,\ell)\in\mathcal{C}^{{\rm in}}}\overline{\psi}_{q}\overline{\pi}_{q}^{\ell}\overline{\gamma}_{q}^{o,\ell}\right)\overline{V}(t,x).
\end{equation}
Using that 
$$
\sum_{(q,o,\ell)\in\mathcal{C}^{{\rm out}}}\overline{\psi}_{q}\overline{\pi}_{q}^{\ell}\overline{\gamma}_{q}^{o,\ell}+\sum_{(q,o,\ell)\in\mathcal{C}^{{\rm in}}}\overline{\psi}_{q}\overline{\pi}_{q}^{\ell}\overline{\gamma}_{q}^{o,\ell}=1,$$
we deduce that 
\begin{equation}
\label{eq:inequality_V_Psi_weights}
\left(\sum_{(q,o,\ell)\in\mathcal{C}^{{\rm out}},\, q\in\I_{+}}\overline{\psi}_{q}\overline{\pi}_{q}^{\ell}\overline{\gamma}_{q}^{o,\ell}\right)\overline{V}(t,x)\leq \left(\sum_{(q,o,\ell)\in\mathcal{C}^{{\rm out}},\, q\in\I_{+}}\overline{\psi}_{q}\overline{\pi}_{q}^{\ell}\overline{\gamma}_{q}^{o,\ell}\right)\Psi(t,x). 
\end{equation}
Notice that, for every $q\in\I_{+}$, if $\ell\in\I$ and $\tilde{\ell}\in\I$ are such that $(+,\ell)\in I_{q}^{{\rm out}}$ or $(-,\ell)\in I_{q}^{{\rm out}}$ and $(+,\tilde{\ell})\notin I_{q}^{{\rm out}}$ and $(-,\tilde{\ell})\notin I_{q}^{{\rm out}}$, by~\eqref{weights_multi_dimensional_case} we have  $\tau_{k(n),i_n^q}^{\ell}(\overline{a})<\Delta t_n$ and $\tau_{k(n),i_n^q}^{\tilde{\ell}}(\overline{a})=\Delta t_n$. Therefore,~\eqref{eq:tau} implies that, for every $n\in\NN$, $\pi_{k(n),i_{n}^{q}}^{\ell}(\overline{a})\geq \pi_{k(n),i_{n}^{q}}^{\tilde{\ell}}(\overline{a})$ and, hence, $\overline{\pi}_{q}^{\ell}\geq \overline{\pi}_{q}^{\tilde{\ell}}$. It follows that for every $q\in\I_{+}$, there exists $(\overline{o},\overline{\ell})\in\{+,-\}\times\I$ such that $(q,\overline{o},\overline{\ell})\in \mathcal{C}^{{\rm out}}$ and $\overline{\pi}_{q}^{\overline{\ell}}\geq 1/p$. Suppose, without loss of generality, that $\overline{o}=+$. Then,  if $(-,\overline{\ell})\notin I_{q}^{{\rm out}}$, it follows from~\eqref{weights_multi_dimensional_case} that   $\gamma_{k(n),i_{n}^{q}}^{+,\overline{\ell}}\geq 1/2$ and hence $\overline{\gamma}_{q}^{+,\overline{\ell}}\geq 1/2$. On the other hand, if  $(-,\overline{\ell})\in I_{q}^{{\rm out}}$, using that $\gamma_{k(n),i_{n}^{q}}^{+,\overline{\ell}}+\gamma_{k(n),i_{n}^{q}}^{-,\overline{\ell}}=1$, we have that $\max\{\gamma_{k(n),i_{n}^{q}}^{+,\overline{\ell}},\gamma_{k(n),i_{n}^{q}}^{-,\overline{\ell}}\}\geq 1/2$ and hence $\max\{\overline{\gamma}_{q}^{+,\overline{\ell}},\overline{\gamma}_{q}^{-,\overline{\ell}}\}\geq 1/2$. Thus, there exists $\tilde{o}\in\{+,-\}$ such that $(q,\tilde{o},\overline{\ell})\in \mathcal{C}^{{\rm out}}$ and $\overline{\gamma}_{q}^{\tilde{o},\overline{\ell}}\geq 1/2$. In turn, 
$$
\sum_{(q,o,\ell)\in\mathcal{C}^{{\rm out}},\, q\in\I_{+}}\overline{\psi}_{q}\overline{\pi}_{q}^{\ell}\overline{\gamma}_{q}^{o,\ell}\geq \frac{\sum_{\ell\in\I_{+}}\overline{\psi}_q}{2p}=\frac{1}{2p}>0
$$
and, by~\eqref{eq:inequality_V_Psi_weights}, we obtain that $\overline{V}(t,x)\leq \Psi(t,x)$, which is a contradiction. 

Thus, let us assume that there exists $\overline{q}\in\mathcal{I}_{+}$ such that $\lambda_{k(n),i^{\overline{q}}_{n}}^{\pm,\ell}(\overline{a})=1$ for all $\ell\in\I$ and $n\in\NN$ large enough. Without loss of generality, suppose that $\overline{q}=1$. Then it follows from~\eqref{eq:extension},~\eqref{nonlinear_fully}, ~\eqref{eq:dis}, and Proposition~\ref{prop:properties}\eqref{prop:monotonia}\&\eqref{prop:constant_addition} that 
\begin{align*}
V_{n} (s_n,y_n)&=\sum_{q=1}^{M}\psi_{i_{n}^{q}}(y_n)(V_n)_{k(n),i_{n}^{q}}\leq \psi_{i_{n}^{1}}(y_n)\mathcal S^{\rm fd}_{k(n),i_n^1}( V_n(t_{k(n)+1},\cdot),\overline{a})+\sum_{q=2}^{M}\psi_{i_{n}^{q}}(y_n)(V_n)_{k(n),i_{n}^{q}}\\[6pt]
&\leq \psi_{i_{n}^{1}}(y_n)\mathcal S^{\rm fd}_{k(n),i_n^1}(\phi(t_{k(n)+1},\cdot)+\xi_{n},\overline{a})+\sum_{q=2}^{M}\psi_{i_{n}^{q}}(y_n)(\phi(t_{k(n)},x_{i_n^q})+\xi_{n})\\
&= \psi_{i_{n}^{1}}(y_n)\mathcal S^{\rm fd}_{k(n),i_n^1}(\phi(t_{k(n)+1},\cdot),\overline{a})+\sum_{q=2}^{M}\psi_{i_{n}^{q}}(y_n)\phi(t_{k(n)},x_{i_n^q})+\xi_{n},
\end{align*}
which, by definition of $\xi_n$, yields 
\begin{equation}
\label{eq:inequality_scheme_phi_0}
\phi(s_n,y_n)\leq \psi_{i_{n}^{1}}(y_n)\mathcal S^{\rm fd}_{k(n),i_n^1}(\phi(t_{k(n)+1},\cdot),\overline{a})+\sum_{q=2}^{M}\psi_{i_{n}^{q}}(y_n)\phi(t_{k(n)},x_{i_n^q}).
\end{equation}
On the other hand, since $V_{n}(\cdot,y_n)$ is constant in $[t_{k(n)},t_{k(n)+1})$ and $s_{n}$ maximizes $V_{n}(\cdot,y_n)-\phi(\cdot,y_n)$, we have that either $s_n=t_{k(n)}$ or $\partial_{t}\phi(s_n,y_n)=0$. In both cases, it holds that $\phi(s_n,y_n)=\phi(t_{k(n)},y_n)+O((\Delta t_n)^{2})$. Thus, by~\eqref{interp_estim} and~\eqref{eq:inequality_scheme_phi_0}, we have that
$$
\sum_{q=1}^{M} \psi_{i_{n}^{q}}(y_n)\phi(t_{k(n)},x_{i_n^q})\leq \psi_{i_{n}^{1}}(y_n)\mathcal S^{\rm fd}_{k(n),i_n^1}(\phi(t_{k(n)+1},\cdot),\overline{a})+\sum_{q=2}^{M}\psi_{i_{n}^{q}}(y_n)\phi(t_{k(n)},x_{i_n^q})+O((\Delta t_n)^{2}+(\Delta x_n)^{2})
$$
and hence, for all $n\in\NN$ large enough, we have 
$$
\phi(t_{k(n)},x_{i_n^1})\leq \mathcal S^{\rm fd}_{k(n),i_n^1}(\phi(t_{k(n)+1},\cdot),\overline{a})+O((\Delta t_n)^{2}+(\Delta x_n)^{2}).
$$
In turn, the consistency of the scheme, which holds under our conditions over $\Delta t_n$ and $\Delta x_n$, yields  
$$
-\partial_{t}\phi(t,x)+\sup_{a\in A}\mathcal{L}^{a}(t,x,D_x\phi(t,x),D_x^2\phi(t,x))=-\partial_{t}\phi(t,x)+\mathcal{L}^{\overline{a}}(t,x,D_x\phi(t,x),D_x^2\phi(t,x))\leq 0,
$$
which shows that {\rm(i.2)} in Definition~\ref{weak_boundary_conditions} holds. Thus, $\overline{V}$ is a viscosity subsolution to \eqref{eq:HJB}.

Finally, by~\eqref{discrete_semi_limits} and the strong comparison principle, we have $\overline{V}=\underline{V}$ in $[0,T]\times\Omega$, which, by Proposition~\ref{prop:uniqueness_sol_weak_sense}, yields $\overline{V}=\underline{V}=V$ in $[0,T]\times\Omega$, where $V$ is given by~\eqref{value_function_exit_from_open}. Thus, the convergence in~\eqref{eq:uniform_convergence_value_function_weak_framework} follows from~\cite[Lemma 1.9 in Chapter V]{MR1484411}.
\end{proof}

\section{Numerical tests}
\label{sec:tests}
We present some numerical simulations to show the behavior of the proposed scheme in four different problems:
\begin{enumerate}
\item[1.] A one-dimensional problem, in which we consider a vanishing diffusion term;
\item[2.] A two-dimensional problem, with degenerate diffusion and a non-homogeneous Dirichlet boundary condition, for which a classical solution exists;
\item[3.] A two-dimensional problem with no diffusion and an  homogeneous Dirichlet boundary condition satisfied in the weak sense;
\item[4.] A stochastic minimum  time problem, modeling the escape from a room with two exits.
\end{enumerate}

In the second and third problems, the analytic solution $V$ is known which allows to analyze the numerical convergence. Let us define the error in the $L^\infty$-discrete norm at time $t=0$ as
$$ E^{\infty}_{\Delta t,\Delta x}=\max_{x_i\in\mathcal{G}_{\Delta x}}|V_{0,i}-V(0, x_i)|,$$
where $\{V_{0,i}\,|\,i\in\I_{\Delta x}\}$ are computed with~\eqref{nonlinear_fully}. The numerical convergence order is given by
$$p^{\infty}_{\Delta t,\Delta x}=\log_2 \left(E^{\infty}_{\Delta t,\Delta x}/E^{\infty}_{2\Delta t,2\Delta x}\right).$$

In all tests, we use an hyperbolic CFL condition $\Delta t=O(\Delta x)$.  In Tests 3 and 4, defined on rectangular domains, we made use of  structured grids. 

\subsection{Test 1: One-dimensional problem with a varying diffusion term}\label{sec:1d}
 We consider the simple Example \ref{ex1}, with the addition of a second-order term,
\begin{align*}
-\partial_t v- \nu v_{xx}+|v_x|&=0\quad\text{for }   (t,x)\in[0,1)\times(0,1),\\
 v(1,x)&=0\quad\text{for }  x\in[0,1],\\
v(t,0)&=1-t\quad v(t,1) =0\quad\text{for }t\in[0,1),
\end{align*}
where $\nu\geq 0$. Figure~\ref{fig:test0} shows the numerical solution $V_{\Delta t,\Delta x}(0,\cdot)$ computed with  $\Delta t = \Delta x = 0.01$
and values  $\nu=1,\,0.1,\,0.01,\,0$  for the viscosity parameter.
We observe that the numerical solution does not develop spurious oscillations near $x=0$, where a boundary layer appears for very small values of $\nu$. In the case $\nu=0$, $V_{\Delta t,\Delta x}(0,\cdot)$ shows a discontinuity at $x=0$, as expected from the explicit expression~\eqref{eq:explicit_V_example}. 
\begin{figure}[htbp]
\centering
\includegraphics[width=0.4\textwidth]{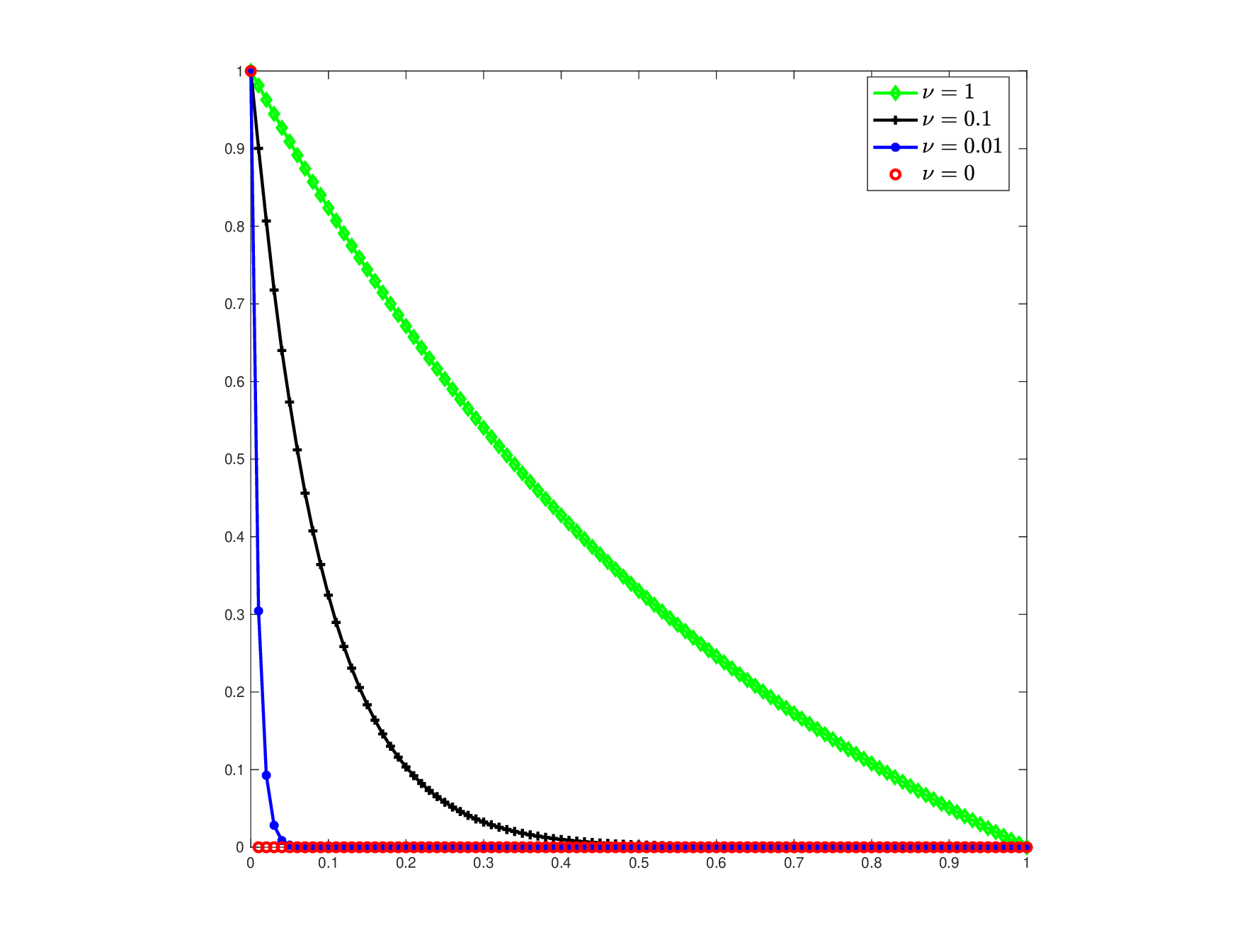}  
\caption{Numerical solutions $V_{\Delta t,\Delta x}(0,\cdot)$ of Test 1 computed with $\nu=1,\,0.1,\,0.01,\,0$. The abscissa and the ordinate represent, respectively, the space variable and the approximated value function at the initial time.}
\label{fig:test0}
\end{figure}

\subsection{Test 2: Two-dimensional problem on a circular domain with a smooth solution.}
We study a benchmark problem, defined on a circular domain with non-homogeneous Dirichlet boundary conditions, which admits a classical and explicit solution. Similar problems have been considered in~\cite{BoZi04} and~\cite{ReisRota17} with periodic and homogeneous Dirichlet boundary conditions, respectively. 
More precisely, let us consider 
the equation
\begin{align*}
-\partial_t v- \frac{1}{2}\text{Tr}[\sigma \sigma^\top D^{2}_{x}v]+|D_xv|&=f\quad\text{in }Q_{T},\\
 v &=\Psi \quad\text{on } \partial^{*}Q_{T},
\end{align*}
where $\Omega=\{x=(x_1,x_2)\in\R^2\,|\, x_1^2+x_2^2< 1\}$, $T=1$,
\begin{multline*}
f(t,x)=(t-1/2)\sin(x_1)\sin(x_2)+(t+1/2)\Big(\sqrt{\cos^2(x_1)\sin^2(x_2)+\sin^2(x_1)\cos^2(x_2)}\\
-2\sin(x_1+x_2)\cos(x_1+x_2)\cos(x_1)\cos(x_2)\Big),
\end{multline*}
$\sigma^1(t,x)=\sqrt{2}\left(\sin(x_1+x_2),\cos(x_1+x_2)\right)^\top$, $\sigma^2(t,x)=(0,0)^\top$, and $\Psi(t,x)=(t+1/2)\sin (x_1)\sin(x_2)$. This problem has a degenerate diffusion term but admits a smooth solution given by  $V(t,x)= (t+1/2)\sin (x_1)\sin(x_2)$ in $\overline{Q}_{T}$. 

In Figure~\ref{fig:test2}, we display the numerical solution $V_{\Delta t,\Delta x}(0,\cdot)$ computed over an unstructured triangular mesh with maximum mesh size $\Delta x=0.125$ and time step $\Delta t= \Delta x/2$.
In Table~\ref{tab:test2}, we show the errors and  the orders of convergence for the cases $\Delta t =\Delta x$ and  $\Delta t =\Delta x/2$. We observe an order of convergence near $1$ and lower errors for smaller time steps.

\begin{figure}[ht!]
\includegraphics[width=0.5\textwidth]{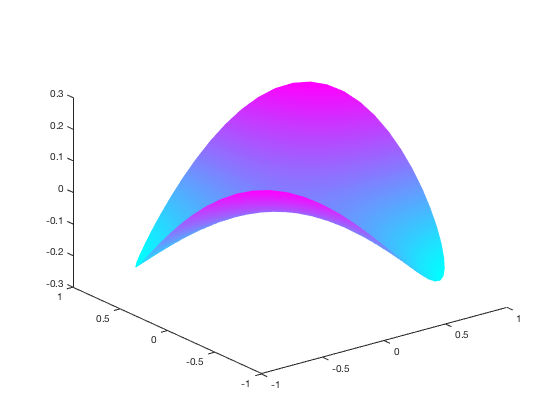}\includegraphics[width=0.5\textwidth]{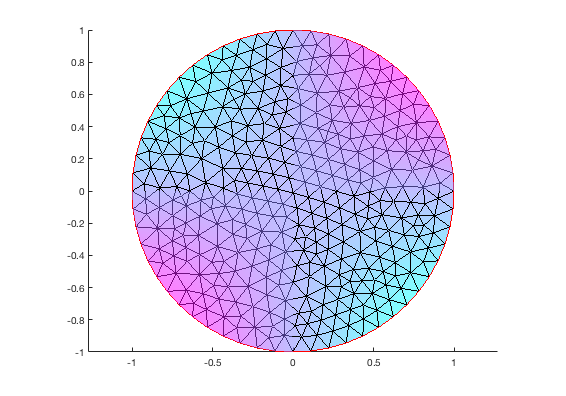} 
\caption{On the left, we display the numerical solution $V_{\Delta t,\Delta x}(0,\cdot)$ of Test 2 computed with ${\Delta x}=0.125$ and $\Delta t=\Delta x/2$. The $x_1x_2$ plane and the $x_3$-axis represent, respectively, the space variable and the approximated value function at the initial time. On the right, we present the projection of the numerical solution onto the $x_1x_2$ plane together with
the computational mesh.}
\label{fig:test2}
\end{figure}

\begin{table}[h]
\centering
\begin{tabular}{|c|c|c|c|c|c|}
\cline{3-6}
\multicolumn{2}{c|}{} &$E^\infty_{\Delta t,\Delta x}$  &$p^{\infty}_{\Delta t,\Delta x}$&$E^\infty_{\Delta t,\Delta x}$   &$p^\infty_{\Delta t,\Delta x}$\\
\hline
${\Delta x}$ &$N_{\Delta x}$  &\multicolumn{2}{c|}{ $\Delta t= {\Delta x}$}&\multicolumn{2}{c|}{$\Delta t= {\Delta x}/2$}\\
 \hline
$5.00\cdot 10^{-1}$     & 48   &$1.26\cdot 10^{-1}$  &$-$      &$4.38\cdot 10^{-2}$& $-$    \\
$2.50\cdot 10^{-1}$     &172   &$5.02\cdot 10^{-2} $ &$1.32 $  &$2.09\cdot 10^{-2}$&$1.06 $ \\
$1.25\cdot 10^{-1}$     &694  &$2.45\cdot 10^{-2} $  &$1.03$  &$1.14\cdot 10^{-2}$ &$0.87$ \\
$6.25\cdot 10^{-2}$     &2686 &$1.24\cdot 10^{-2} $  &$0.98$  &$6.19\cdot 10^{-3}$ &$0.88$\\
\hline
\end{tabular}
\vspace{0.2cm}
\caption{Maximum mesh size ${\Delta x}$ (first column), number of vertices $N_{\Delta x}$ (second column),
$L^\infty$-errors $E^\infty_{\Delta t,\Delta x}$ and orders of convergence $p^{\infty}_{\Delta t,\Delta x}$ for $\Delta t = {\Delta x}$ (third and fourth columns) and  for $\Delta t =2 {\Delta x}$ (last two columns) in Test 2.}
\label{tab:test2}
\end{table}

\subsection{Test 3: Two-dimensional problem on a square domain with a nonsmooth solution}
\label{test3}
We consider here the following first-order example, discussed in~\cite{W08,KY2015},
\begin{align*}
-\partial_t v+ \underset{a\in A}\max\left\{
-a^{\top}D_x v-\frac{1}{4}a_1^2-a_2^2\right\} 
&=1\quad\text{in }Q_{T},\\
 v &=\Psi \quad\text{on } \partial^{*}Q_{T},
\end{align*}
where $\Omega=(0,1)\times(0,1)$, $A=[-2,2]\times [-2,2]$, $T=1.5$,  and, setting $\psi(x)=\min(x_1,1-x_1,2x_2,2(1-x_2))$,
$$
\Psi(t,x)=
\begin{cases}
0&\text{if }(t,x)\in [0,T)\times\partial\Omega,\\
-2\psi(x)&\text{if }(t,x)\in\{T\}\times\overline{\Omega}.
\end{cases}
$$

Notice that at the final time $T$ the solution satisfies the boundary condition in the strong sense. In Figure \ref{fig:test1}, we present the numerical solution at times $t=1.5,1.2,0.9,0.3,0.1,0$ in the open space domain $\Omega$, calculated using a uniform structured grid with $\Dx=\Dt=0.005$. We observe, as in~\cite{W08,KY2015}, the existence of $\bar{t}\in (0,T)$ such that the boundary condition holds in the weak sense for times $t\in(\bar{t},T)$ and in the strong sense for $t\in [0,\bar{t}\,]$. Moreover, as $t\to 0$, we notice that the solution converges towards an approximation of the non-smooth function $\psi$. In Table \ref{tab:test3}, we show the errors obtained by comparing the numerical solution at time $0$ with $\psi$, and the convergence order for cases $\Delta t = {\Delta x}$ and $\Delta t =2 {\Delta x}$. The table shows a convergence rate of order 1, with smaller errors for smaller time steps.
Note that the absence of a second-order term in the equation allows one to take time steps larger than space steps.
\begin{table}[h]
\centering
\begin{tabular}{|c|c|c|c|c|c|}
\cline{3-6}
\multicolumn{2}{c|}{} &$E^\infty_{\Delta t,\Delta x}$  &$p^{\infty}_{\Delta t,\Delta x}$&$E^\infty_{\Delta t,\Delta x}$   &$p^\infty_{\Delta t,\Delta x}$\\
\hline
${\Delta x}$ &$N_{\Delta x}$  &\multicolumn{2}{c|}{ $\Delta t= {\Delta x}$}&\multicolumn{2}{c|}{$\Delta t= {2 \Delta x}$}\\
 \hline
$4.00\cdot 10^{-2}$  &$\;\;2500$      &$2.08\cdot 10^{-4}$  &  $-$   &$6.25\cdot 10^{-4}$& $-$      \\
$2.00\cdot 10^{-2}$ &$\;\;5000$      &$1.02\cdot 10^{-4} $ &$0.97 $  &$3.06\cdot 10^{-4}$&$1.05 $ \\
$1.00\cdot 10^{-2}$ &$10000$     &$5.05\cdot 10^{-5} $  &$1.01$  &$1.51\cdot 10^{-4} $  &$0.98$ \\
$5.00\cdot 10^{-3}$ &$20000$     &$2.51\cdot 10^{-5} $  &$1.00$  &$7.53\cdot 10^{-5}$ &$1.00$\\
\hline
\end{tabular}
\vspace{0.2cm}
\caption{
Maximum mesh size ${\Delta x}$ (first column), number of vertices $N_{\Delta x}$ (second column),
$L^\infty$-errors $E^\infty_{\Delta t,\Delta x}$ and orders of convergence $p^{\infty}_{\Delta t,\Delta x}$ for $\Delta t = {\Delta x}$ (third and fourth columns) and  for $\Delta t =2 {\Delta x}$ (last two columns) in Test 3.}\label{tab:test3}
\end{table}

\begin{figure}[ht!]
\centering
    \begin{subfigure}{0.3\textwidth}
        \centering
        \includegraphics[width=\textwidth]{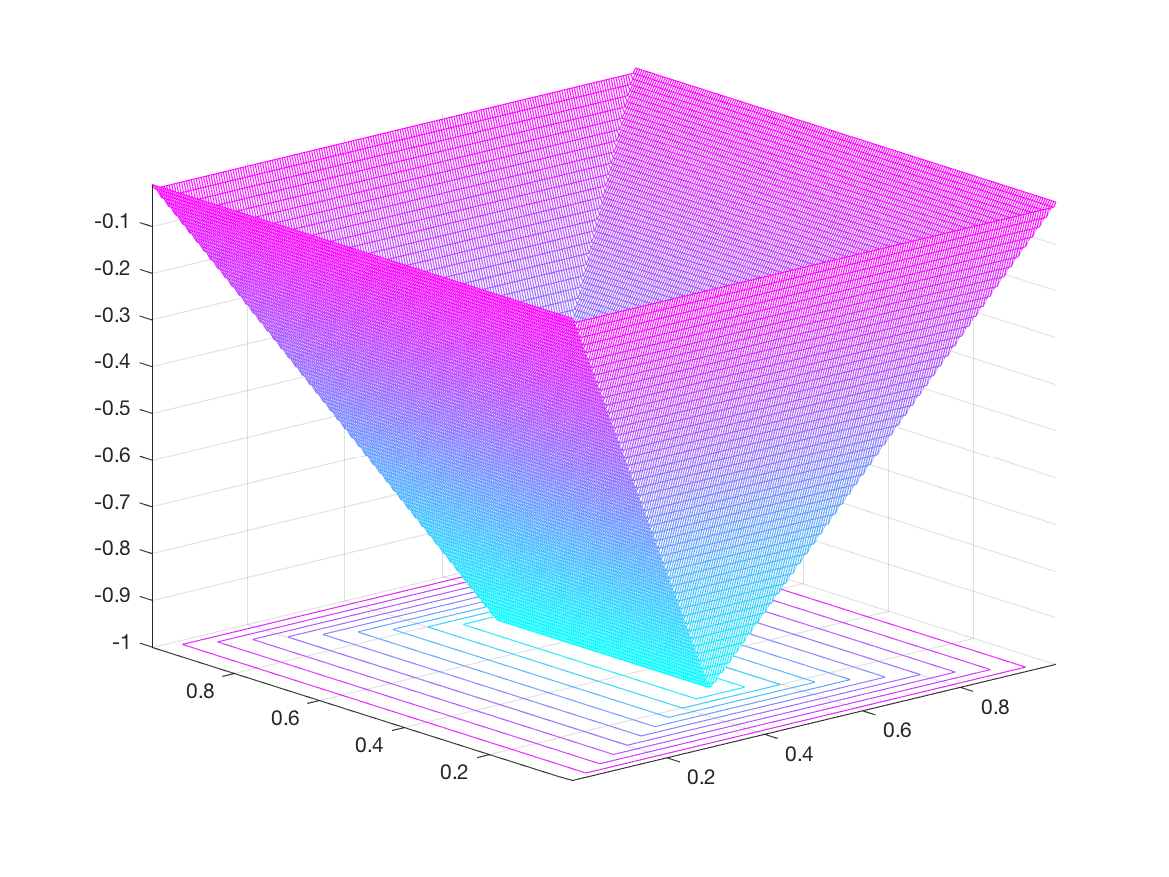}
        \caption{$t = 1.5$}
        \label{fig:test1_t0}
    \end{subfigure}
    \begin{subfigure}{0.3\textwidth}
        \centering
        \includegraphics[width=\textwidth]{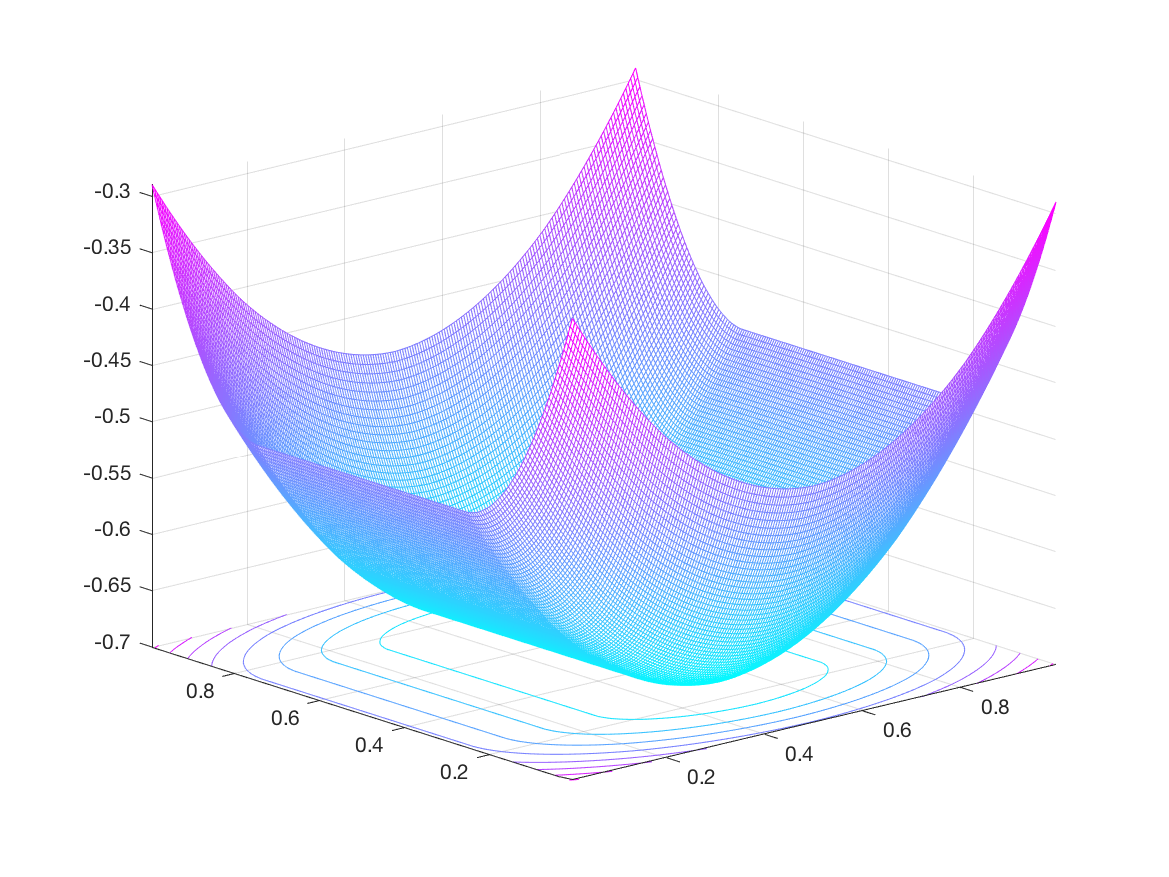}
        \caption{$t = 1.2$}
        \label{fig:test1_t0.3}
    \end{subfigure}
    \begin{subfigure}{0.3\textwidth}
        \centering
        \includegraphics[width=\textwidth]{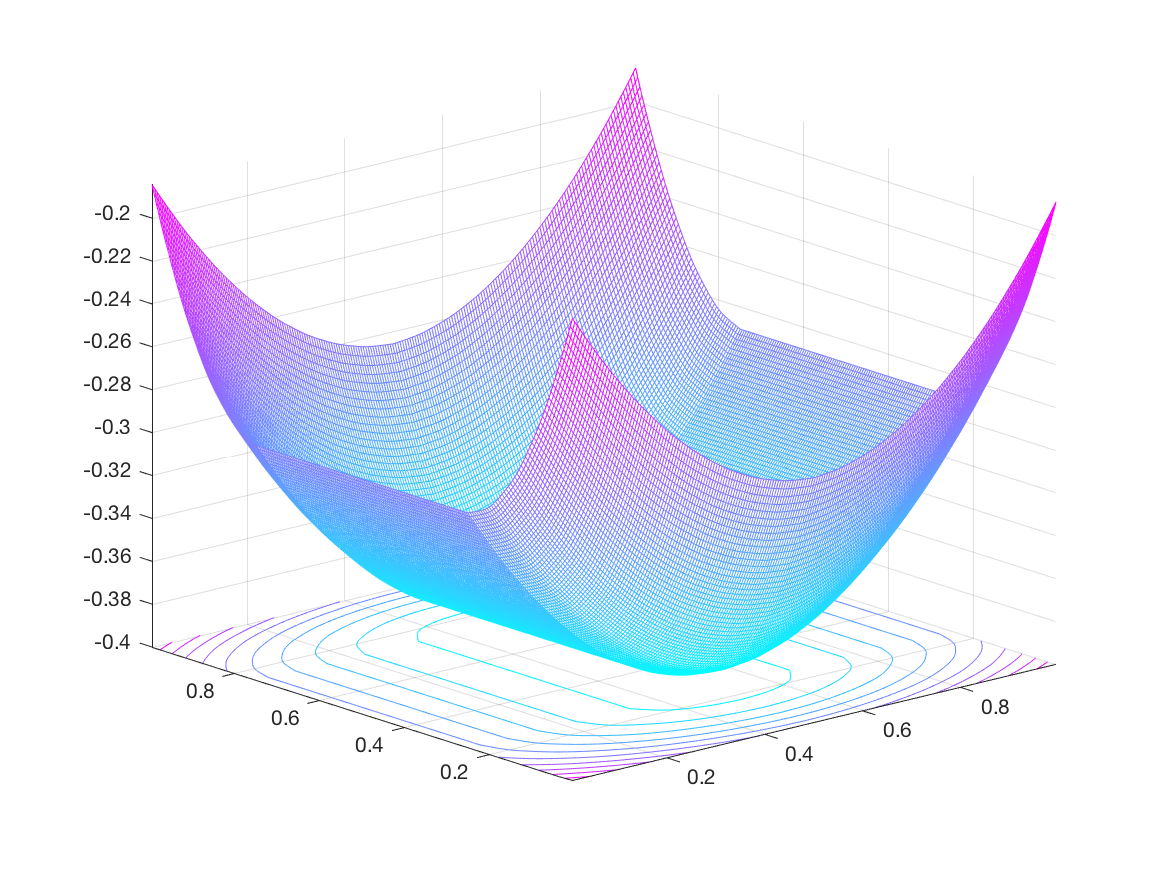}
        \caption{$t = 0.9$}
        \label{fig:test1_t0.6}
    \end{subfigure}
    
    \begin{subfigure}{0.3\textwidth}
        \centering
        \includegraphics[width=\textwidth]{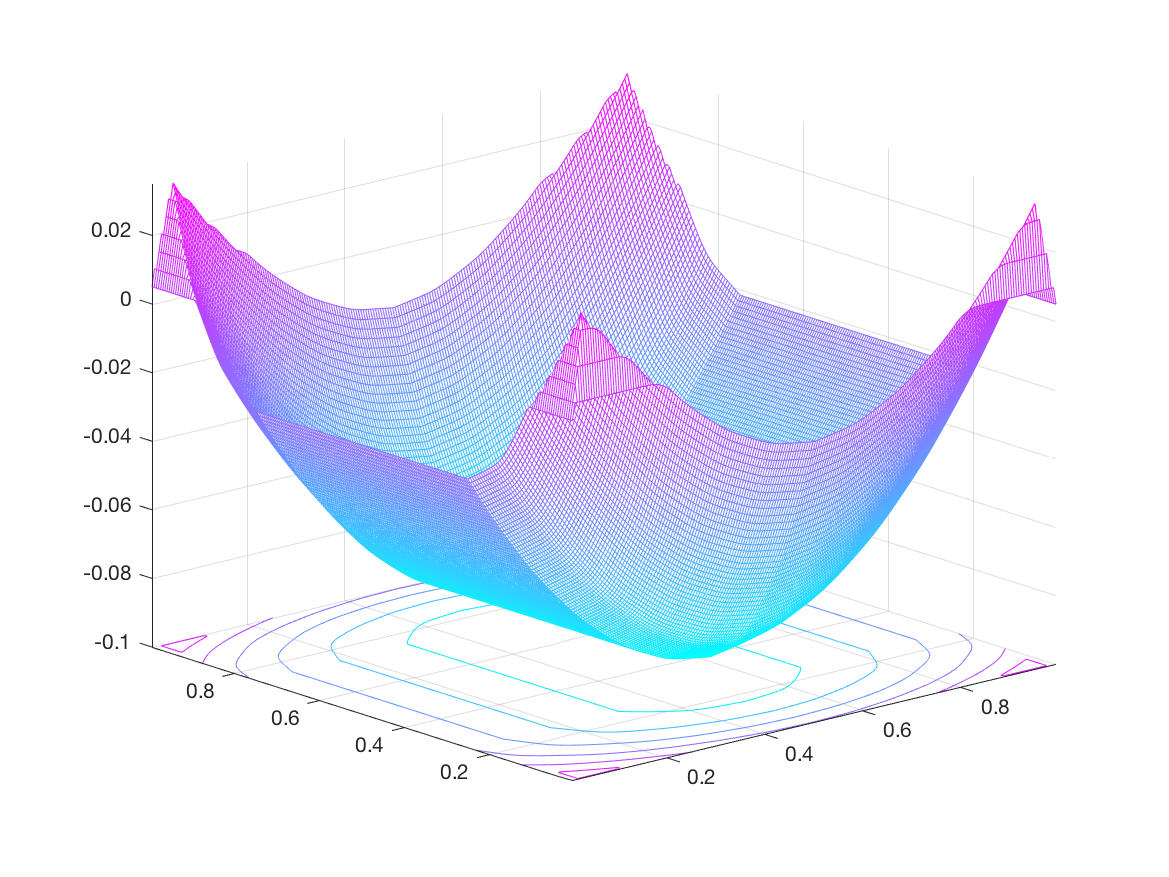}
        \caption{$t = 0.3$}
        \label{fig:test1_t0.9}
    \end{subfigure}

    \begin{subfigure}{0.3\textwidth}
        \centering
        \includegraphics[width=\textwidth]{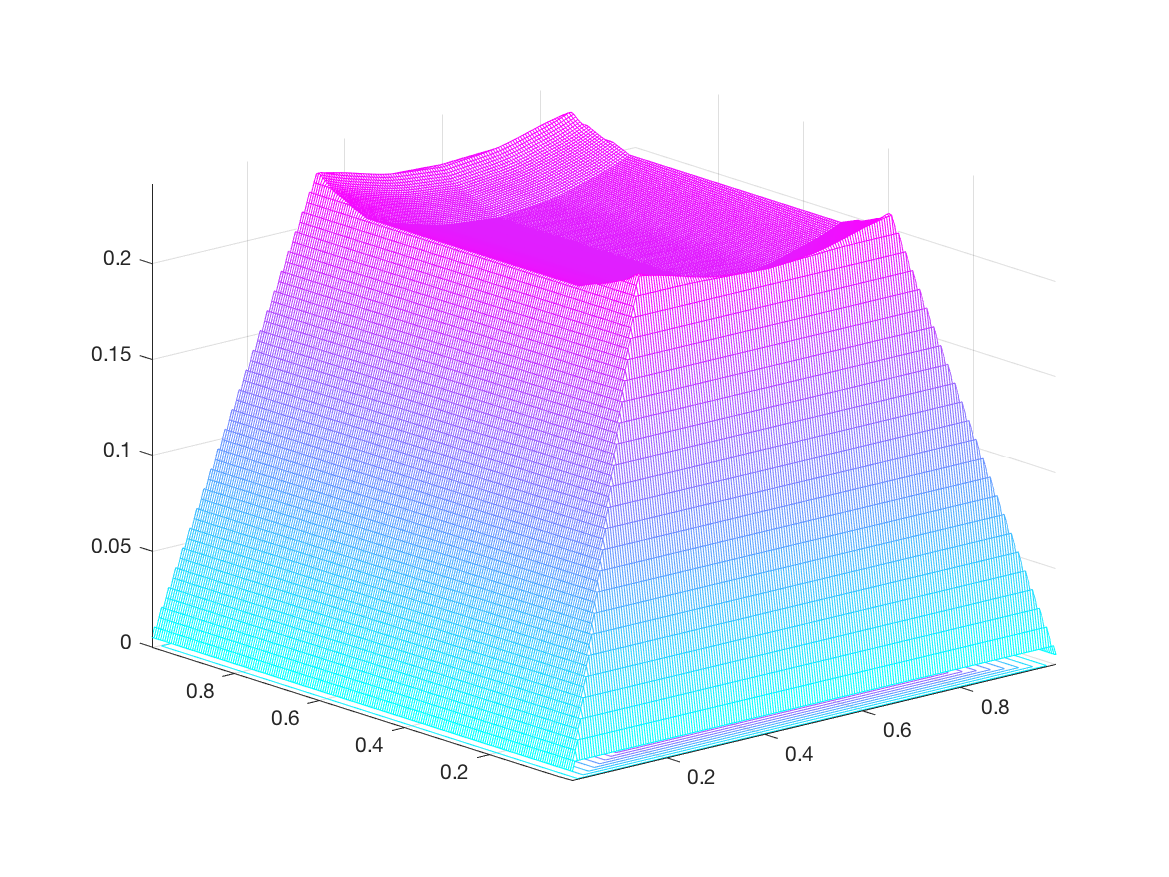}
        \caption{$t = 0.1$}
        \label{fig:test1_t1.5}
        \end{subfigure}
        \begin{subfigure}{0.3\textwidth}
        \centering
        \includegraphics[width=\textwidth]{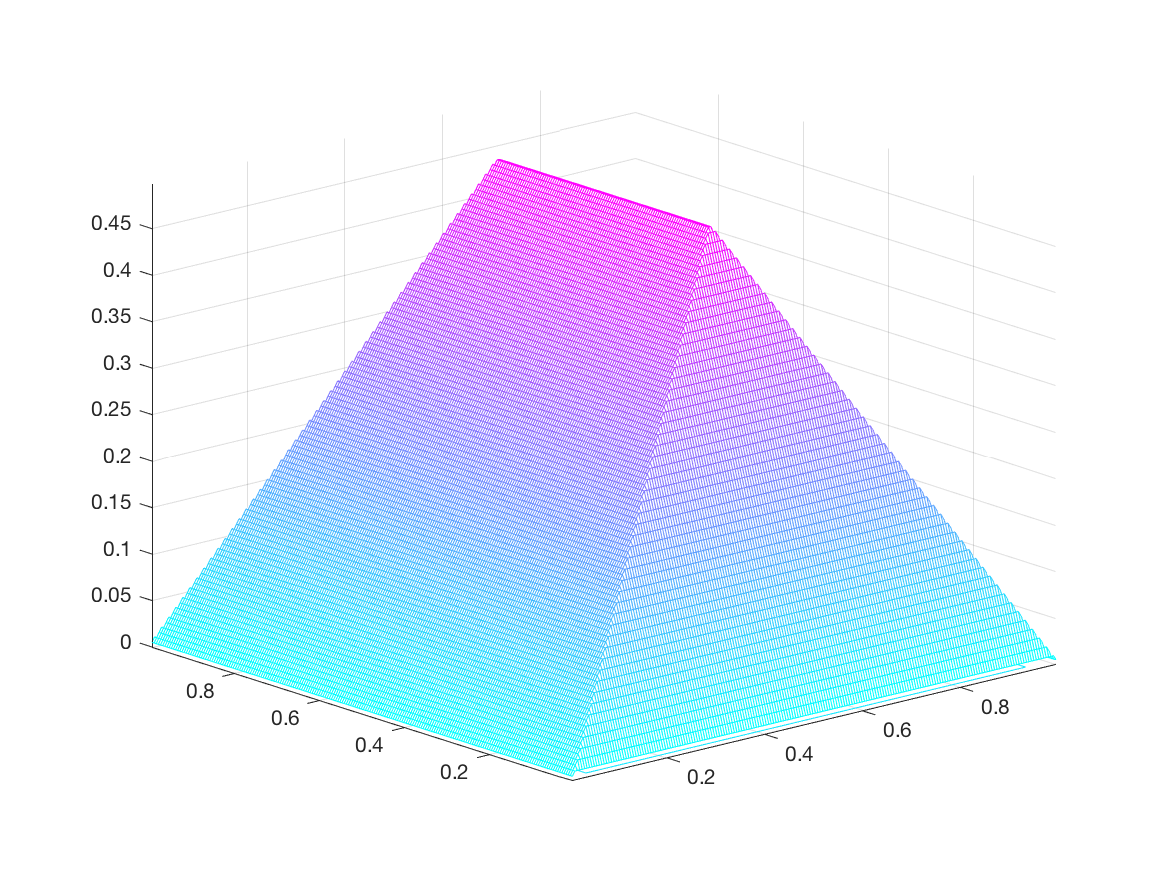}
        \caption{$t = 0$}
        \label{fig:test1_t1.2}
    \end{subfigure}
\caption{Numerical solution $V_{\Dt,\Dx}$ of Test 3 at different times. (A) $t = 1.5$, (B) $t = 1.2$, (C) $t = 0.9$, (D) $t = 0.3$, (E) $t = 0.1$, (F) $t = 0$}
\label{fig:test1}
\end{figure}
\subsection{Test 4: Stochastic minimum time problem of exiting from a room with two doors}
 We consider problem~\eqref{eq:HJB} to model the exit from a bounded rectangular domain $\Omega = (-1,1) \times (-0.5,0.5)$ in a time horizon $T = 5$. We take $f(t,x,a) = \frac{1}{2}|a|^2+40$, $b(t,x,a) = a$, $A=\{a\in\R^2\,|\, |a|\leq1\}$, and $\Psi(T,x) = 0$ for $x \in\overline{\Omega}$.
We introduce a degenerate diffusion $\sigma(t,x,a)=\left(\sigma^1(x)\;\;\sigma^2\right)$, where $\sigma^1(x)=s(x)(1\;0)^{\top}$, with $s(x)=10\max(-x_1^2-x_2^2+(0.4)^2,0)$, and $\sigma^2=(0\;0)^{\top}$. 
We prescribe Dirichlet boundary conditions on $\Gamma_1 = \{x \in \partial \Omega\,|\, x_1 = -1, |x_2| \leq 0.2\}$ and $\Gamma_2 = \{x \in \partial \Omega\,|\, x_1 = 1, |x_2| \leq 0.2\}$, representing two exits, and on the remaining part of the boundary $\Gamma_w = \partial \Omega \setminus (\Gamma_1 \cup \Gamma_2)$, which models the walls of the room.
The cost of exiting through the two doors is set as $\Psi(t,x) = 0$ if $x \in \Gamma_1$ and $\Psi(t,x) = 0.2$ if $x \in \Gamma_2$. To model the wall, we prescribe a high exit cost on $\Gamma_{w}$. In our test, we have set $\Psi(t,x) = 100$ for $x \in \Gamma_w$. Notice that $\Psi$ is discontinuous and hence our convergence results do not apply. Despite this, we observe below that our scheme still provides compelling numerical results.

In Figure~\ref{fig:test3a}, we present the approximation of the viscosity solution computed on a uniform structured grid with $\Delta t = \Delta x = \sqrt{2}/50$. The solution is shown at time \(t = 0\) (left), along with its contour lines on the \(x_1x_2\)-plane (right). In the right panel, we also show six trajectories starting from the points $P_{1}=(-0.1, -0.3)$, $P_{2}=(-0.1,0.1)$, $P_{3}=(-0.1,0.3)$, $P_{4}=(0.2,-0.3)$, $P_{5}=(0.3,0.2)$, and $P_{6}=(0.2, 0.3)$. 
Provided that $v$ is regular enough, standard verification results (see, e.g.,~\cite[Section III.3]{FS06}) imply that, starting from $P_{i}$ and before hitting the boundary, an optimal stochastic trajectory solves 
$$
\begin{aligned}
\dd Y(s)&=\text{Proj}_{A}(-D_{x}v(s,Y(s)))\dd s+\sigma(s,Y(s),\alpha(s))\dd W(s),\\
Y(0)&=P_{i},
\end{aligned}
$$
where $\text{Proj}_{A}(z)=z$, if $z\in A$, and $\text{Proj}_{A}(z)=z/|z|$, otherwise. We approximate the solution to the SDE 
by using a stochastic Euler scheme (see, e.g.,~\cite{MR1214374}) with a time step $\Delta t/2$, where
$D_{x}v$ is approximated by using finite differences applied to the numerical value function $V_{\Delta t, \Delta x}$.

In Figure~\ref{fig:test3b}, we show two additional simulations of six approximate optimal stochastic trajectories, starting from the same initial conditions  $P_{1},\dots ,P_6$. 
We observe that the degenerate diffusion causes oscillations in the horizontal direction of the trajectories and that different choices can be made for exiting; for instance, two out of three trajectories starting from $P_1$ exit through the nearest door.

In Figure~\ref{fig:test3c}, we present the numerical results at time $t=0$ for the deterministic case ($\sigma(t,x,a)\equiv 0$) calculated in the same domain as in the previous simulation, using the same space mesh, and taking $\Delta t=2 \Delta x$. We also show the contour lines on the $x_1x_2$-plane,
along with six approximate optimal trajectories starting again from the points $P_1,\dots,P_6$.
   \begin{figure}[h!]
\centering
	\includegraphics[width=0.4\textwidth]{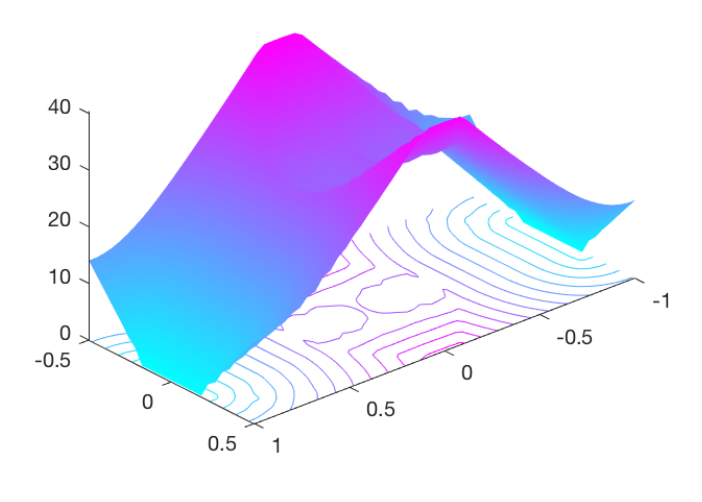}\includegraphics[width=0.4\textwidth]{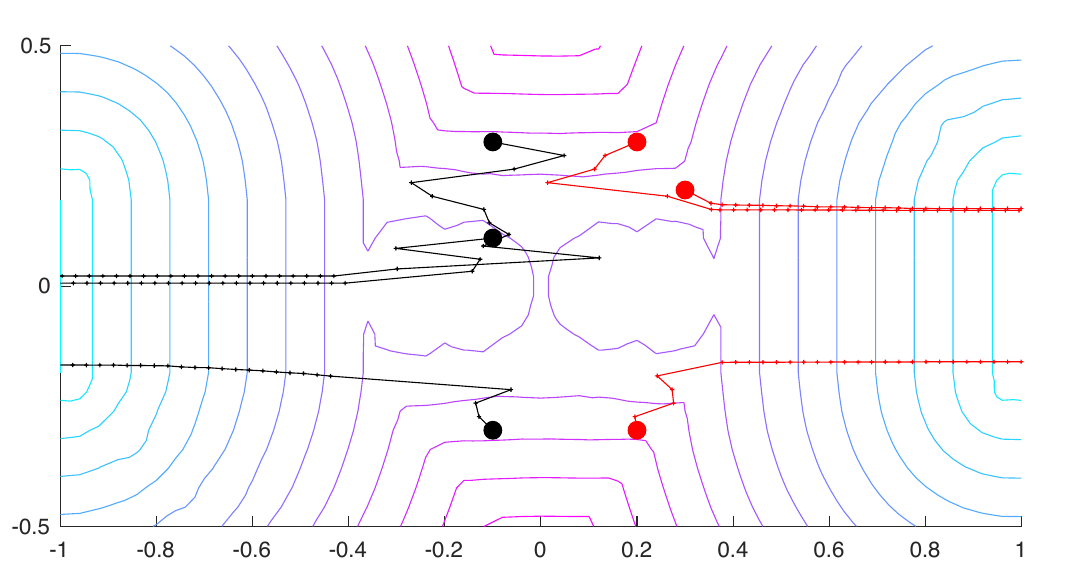} 

\caption{Test 4: approximate viscosity solution at time $t=0$ with $\Delta t = \Delta x = \sqrt{2}/50$ (left). Contour lines on the $x_1x_2$-plane and simulations of approximate optimal trajectories (right). }\label{fig:test3a}

\end{figure}

\begin{figure}[hb!]
\centering
	\includegraphics[width=0.4\textwidth]{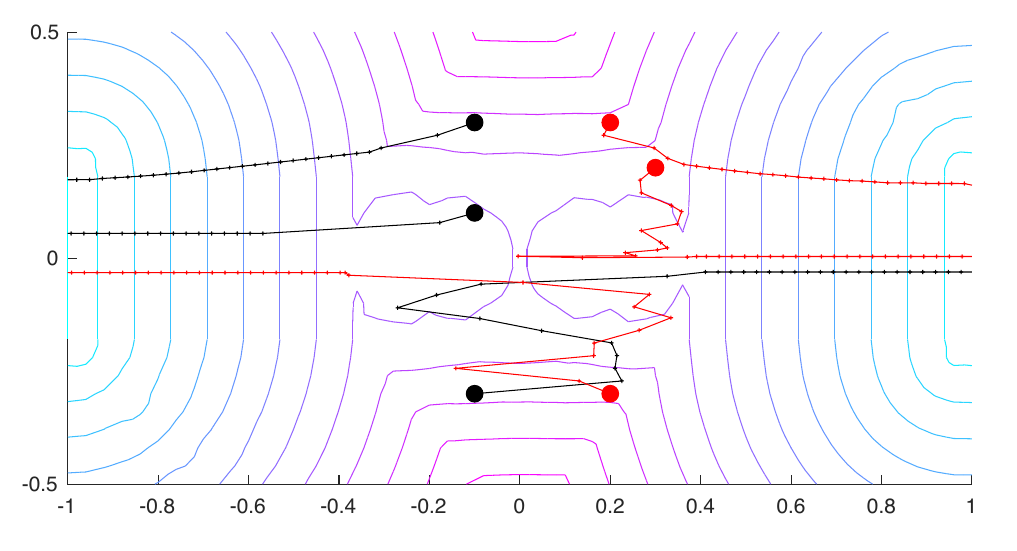}\includegraphics[width=0.4\textwidth]{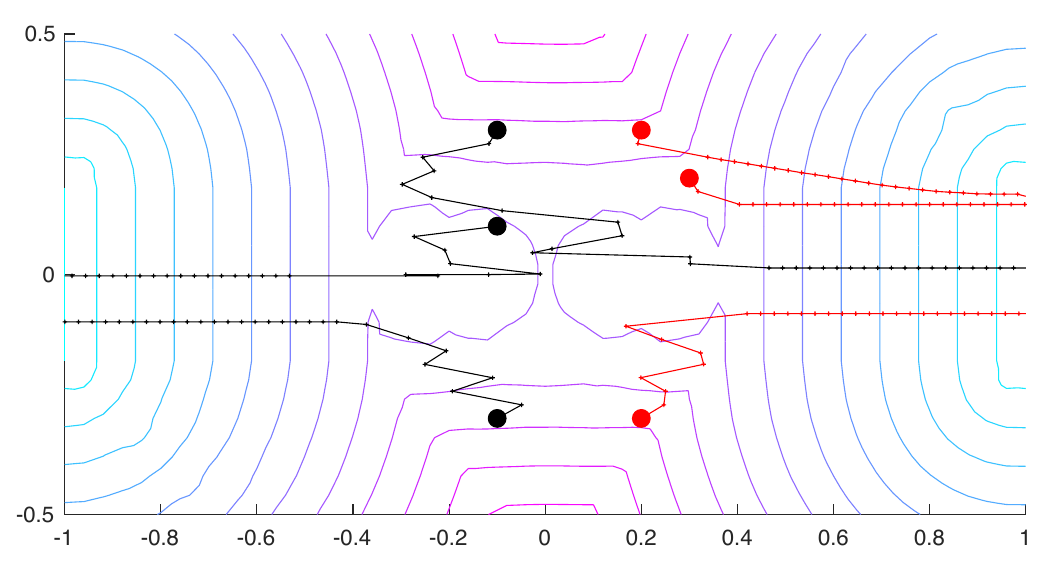} 
	\caption{Test 4: contour lines on the $x_1x_2$-plane and simulations of approximate optimal trajectories. }\label{fig:test3b}

\end{figure}

\begin{figure}[h]
\centering
\includegraphics[width=0.4\textwidth]{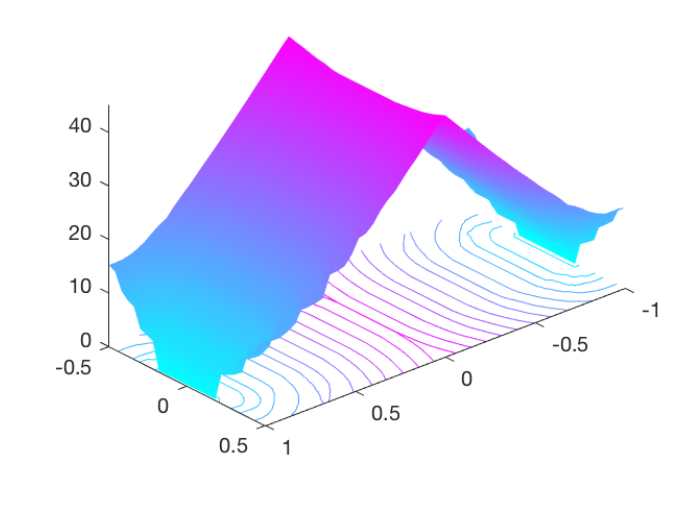}\includegraphics[width=0.4\textwidth]{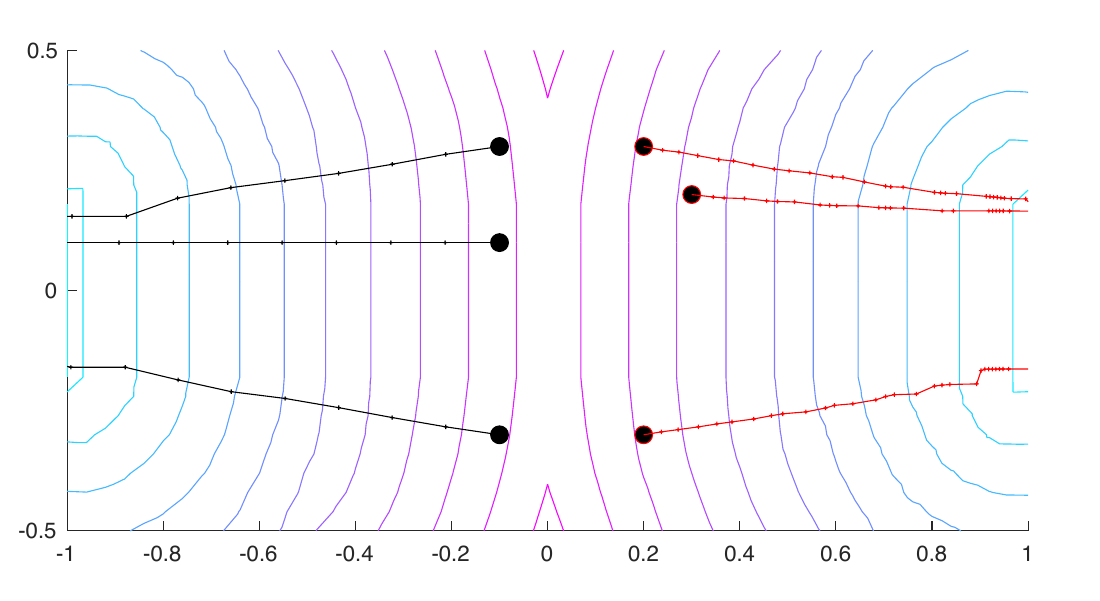} 

\caption{Test 4: approximate viscosity solution at time $t=0$ computed with $\sigma(t,x,a)\equiv0$ and $\Delta t = 2{\Delta x} = \sqrt{2}/25$ (left). Contour lines on the $x_1x_2$-plane  and approximate deterministic optimal trajectories (right). }
\label{fig:test3c}
\end{figure}

\newpage

\appendix
\section{}
\label{Taylor_dim_1_dem}
\begin{proof}[Proof of Lemma~\ref{Taylor_dim_1}]
Let $k\in\I_{\Delta t}^{*}$, $x\in\Omega$, and $\ell\in \mathcal{I}$. We have that
\begin{equation*}
\begin{array}{rcl}
\varphi(t_{k}+\lambda^{\pm}_k(x)\Dt,y^{\pm,\ell}_{k} (x)) &=&  {\varphi}(t_{k},x) + \partial_{t}\varphi(t_k,x) \lambda^{\pm,\ell}_k(x)\Dt+ D_x\varphi(t_k,x)^{\top}(y^{\pm,\ell}_{k} (x)-x)\\[6pt]
\; & \; & +  \left(\int_{0}^{1}\left[\partial_{t} \varphi\big(t_{k}+ \xi \lambda^{\pm,\ell}_k(x)\Dt, x+\xi (y^{\pm,\ell}_{k} (x)-x)\big)-\partial_{t}\varphi(t_k,x) \right]\dd \xi\right) \lambda^{\pm,\ell}_k(x)\Dt\\[6pt]
\; & \; &+ \left(\int_{0}^{1}\left[D_x\varphi(t_{k}+ \xi \lambda^{\pm,\ell}_k(x)\Dt, x+\xi(y^{\pm,\ell}_{k} (x)-x))- D_x\varphi(t_k,x) \right]\dd \xi\right)\cdot(y^{\pm,\ell}_{k} (x)-x)\\[10pt]
\; & =&  \varphi(t_{k},x) + \partial_{t}\varphi(t_k,x) \lambda^{\pm,\ell}_k(x)\Dt+ D_x\varphi(t_k,x)^{\top} (y^{\pm,\ell}_{k} (x)-x)\\[6pt]
\; & \; &+ \left(\int_{0}^{1}\left[D_x\varphi(t_{k}, x+\xi(y^{\pm,\ell}_{k} (x)-x))- D_x\varphi(t_k,x) \right]\dd \xi\right)^{\top} (y^{\pm,\ell}_{k} (x)-x)\\[8pt]
&+&
\lambda^{\pm,\ell}_k(x)\Dt \epsilon_{k}^{1,\pm,\ell}(\Delta t,x) +\epsilon_{k}^{2,\pm,\ell}(\Delta t,x)^{\top} (y^{\pm,\ell}_{k} (x)-x)
\end{array}
\end{equation*}
Hence, we obtain 
\begin{equation}
\label{sviluppo_dim_1}
\begin{array}{rcl}
\varphi(t_{k}+\lambda^{\pm}_k(x)\Dt,y^{\pm,\ell}_{k} (x))   
\; &= & {\varphi}(t_{k},x) + \partial_{t}\varphi(t_k,x) \lambda^{\pm,\ell}_k(x)\Dt+ D_x\varphi(t_k,x)^{\top}(y^{\pm,\ell}_{k} (x)-x)\\[8pt]
\; &\,  & + \frac{1}{2} (y^{\pm,\ell}_{k} (x)-x)^{\top} \big[D_x^2 \varphi(t_k,x) (y^{\pm,\ell}_{k} (x)-x)\big]  +\lambda^{\pm,\ell}_k(x)\Dt \epsilon_{k}^{1,\pm,\ell}(\Delta t,x) \\[8pt]
\; & \; &+(y^{\pm,\ell}_{k} (x)-x)^{\top} \epsilon_{k}^{2,\pm,\ell}(\Delta t,x)+ (y^{\pm,\ell}_{k} (x)-x)^{\top}\big[\epsilon_{k}^{3,\pm,\ell}(\Delta t,x)(y^{\pm,\ell}_{k} (x)-x)\big]  \\[8pt]
\; & = & \varphi(t_k,x) + \lambda^{\pm,\ell}_k(x) \Delta t\bigg( \partial_t \varphi(t_k,x) +  D_x\varphi(t_k,x)^{\top} b(t_k,x)  \\[8pt]
\; & \; &+ \frac{p}{2}\sigma^{\ell}(t_k,x)^{\top} \left[ D_x^{2}\varphi(t_k,x)\sigma^{\ell}(t_k,x)\right]\bigg)\pm \sqrt{p\lambda^{\pm,\ell}_k(x)}D_x\varphi(t_k,x)^{\top} \sigma^{\ell}(t_k,x)\\[8pt]
\; & \; &  +\lambda^{\pm,\ell}_k(x)\Dt \Big(\epsilon_k^{1,\pm,\ell}(\Delta t,x) +\frac{(y_{k}^{\pm,\ell}(x)-x)}{\lambda^{\pm,\ell}_k(x)\Dt}^{\top}\epsilon_k^{2,\pm,\ell}(\Delta t,x) \\[8pt]
\; & \; & + \frac{(y^{\pm,\ell}_{k} (x)-x)}{\lambda^{\pm,\ell}_k(x)\Dt}^{\top}\big[\epsilon_{k}^{3,\pm,\ell}(\Delta t,x)(y^{\pm,\ell}_{k} (x)-x)\big]+\epsilon_k^{4,\pm,\ell}(\Delta t,x) \Big),
\end{array}
\end{equation}
where 
$$
\begin{array}{rcl}
\epsilon_{k}^{1,\pm,\ell}(\Delta t,x)&=&\int_{0}^{1}\left[\partial_{t} \varphi(t_{k}+ \xi \lambda^{\pm,\ell}_k(x)\Dt, x+\xi (y^{\pm,\ell}_{k} (x)-x))-\partial_{t}\varphi(t_k,x) \right]\dd \xi, \\[8pt]
\epsilon_{k}^{2,\pm,\ell}(\Delta t,x) &=&  \int_{0}^{1}\left[D_x\varphi(t_{k}+ \xi \lambda^{\pm,\ell}_k(x)\Dt, x+\xi(y^{\pm,\ell}_{k} (x)-x))- D_x\varphi(t_{k}, x+\xi(y^{\pm,\ell}_{k} (x)-x)) \right]\dd \xi, \\[8pt]
 \epsilon_{k}^{3,\pm,\ell}(\Delta t,x) &=& \int_{0}^{1}(1-\xi)\left[ D^2_{x}\varphi(t_k,x+\xi (y^{\pm,\ell}_{k} (x)-x))- D_x^2\varphi(t_k,x) \right] \dd \xi,\\[8pt]
 \epsilon_k^{4,\pm,\ell}(\Delta t,x)&=& \frac{b(t_k,x)}{2}^{\top}\big[D_x^2\varphi(t_k,x) b(t_k,x)\big]\lambda^{\pm,\ell}_k(x)\Delta t  \pm b(t_k,x)^{\top} \big[D_x^2\varphi(t_k,x)\sigma^\ell(t_k,x)\big] 
 (p\lambda^{\pm,\ell}_k(x)\Delta t)^{\frac{1}{2}}.
\end{array}
$$
It follows from~\eqref{sviluppo_dim_1} and~\eqref{weights_multi_dimensional_case} that~\eqref{somma_taylor_pm} holds, with
$$ \epsilon_k^{\pm,\ell}(\Delta t,x)= \epsilon_k^{1,\pm,\ell}(\Delta t,x) +\frac{(y_{k}^{\pm,\ell}(x)-x)}{\lambda^{\pm,\ell}_k(x)\Dt}^{\top} \epsilon_k^{2,\pm,\ell}(\Delta t,x)+ \frac{(y^{\pm,\ell}_{k} (x)-x)}{\lambda^{\pm,\ell}_k(x)\Dt}^{\top}\big[\epsilon_{k}^{3,\pm,\ell}(\Delta t,x)(y^{\pm,\ell}_{k} (x)-x)\big]+\epsilon_k^{4,\pm,\ell}(\Delta t,x).$$
Since 
$$\sup_{ k\in \mathcal{I}_{\Delta t}^*,\, x\in \Omega}\big\{| \epsilon_k^{1,\pm,\ell}(\Delta t,x)|\vee | \epsilon_k^{3,\pm,\ell}(\Delta t,x)|\vee | \epsilon_k^{4,\pm,\ell}(\Delta t,x)| \big\}  \to 0 \quad \text{as } \;  \Delta t\to 0$$
 and
 $\varphi\in \mathcal{F}_{\beta}(\overline{Q}_T)$, which implies that 
$$ \sup_{k\in \mathcal{I}_{\Delta t}^*, x\in \Omega}\bigg|\frac{(y_{k}^{\pm,\ell}(x)-x)}{\lambda^{\pm,\ell}_k(x)\Dt}\epsilon_k^{2,\pm,\ell}(\Delta t,x) \bigg| \to 0 \quad \text{as } \;  \Delta t\to 0,$$
we conclude that~\eqref{errore_globale_apendice} holds. 
\end{proof}
 
\bibliographystyle{plain}
\bibliography{BibDir}
\end{document}